\theoremstyle{plain} 
\newtheorem{theorem}{\indent\sc Theorem}[section]
\newtheorem{lemma}[theorem]{\indent\sc Lemma}
\newtheorem{corollary}[theorem]{\indent\sc Corollary}
\newtheorem{proposition}[theorem]{\indent\sc Proposition}
\theoremstyle{definition} 
\newtheorem{definition}[theorem]{\indent\sc Definition}
\newtheorem{remark}[theorem]{\indent\sc Remark}
\newcommand{\C}{\mathbb{C}}
\newcommand{\R}{\mathbb{R}}
\newcommand{\Q}{\mathbb{Q}}
\newcommand{\Z}{\mathbb{Z}}
\newcommand{\N}{\mathbb{N}}
\title{Linear independence of values of logarithms revisited}
\author{\textsc{Makoto Kawashima}}
\date{} 
\begin{document}

\maketitle

\tableofcontents

\footnote{ 
\textit{Key words and phrases}.
Pad\'e approximation, Linear independence, Logarithm, Exponential function.
}

\begin{abstract}
Let $m\ge 2$ be an integer, $K$ an algebraic number field and $\alpha\in K\setminus \{0,-1\}$ with sufficiently small absolute value.
In this article, we provide a new lower bound for linear form in $1,{\rm{log}}(1+\alpha),\ldots,{\rm{log}}^{m-1}(1+\alpha)$ with algebraic integer coefficients in both complex and $p$-adic cases (see Theorem $\ref{power of log indep}$ and Theorem $\ref{p power of log indep}$). Especially, in the complex case, our result is a refinement of the result of Nesterenko-Waldschmidt on the lower bound of linear form in certain values of power of logarithms. The main integrant is based on Hermite-Mahler's Pad\'{e} approximation of exponential and logarithm functions.
\end{abstract}
\section{Introduction}
Let $\theta$ be a transcendental complex number. The function $\Phi:\N\times \R_{>0}\longrightarrow \R_{>0}$ is a transcendental measure for $\theta$ if, for any sufficiently large positive integer $m$, any sufficiently large positive real number $H$ and any nonzero polynomial $P(z)\in \Z[z]$ with ${\rm{deg}}(P)\le m$ and ${\rm{H}}(P)\le H$, we have
$${\rm{exp}}(-\Phi(m,H)) \le |P(\theta)|.$$ 
Let $\alpha$ be an algebraic number different from $0$ and $1$. Then the complex number ${\rm{log}}(\alpha)$ is transcendental. 
A great deal of work has already been done on finding transcendence measures for the values of logarithms.
For example, Mahler \cite{M1},\cite{M2},\cite{M3}, Gel'fond \cite{G}, Feldman \cite{F}, Cijsouw \cite{C}, Reyssat \cite{R} and Waldschmidt \cite{W1}. 
In \cite{N-W}, Nesterenko-Waldschmidt gave the following transcendence measure of values of logarithm.
\begin{theorem}  $($\rm{\cite[Theorem $6.$ $1)$]{N-W}}$)$ \label{N-W}
Let $\alpha$ be an algebraic number, $\alpha\neq 0,1$. 
Then there exists a effectively computable positive number $C=C(\alpha)$, depending only on $\alpha$ and the determination of the logarithm of $\alpha$ such that 
if $P(z)\in \Z[z] \setminus\{0\}$, ${\rm{deg}}P\le m, {\rm{L}}(P)\le L$, then
\begin{align} \label{N-W lower bound}
|P({\rm{log}}(\alpha))|\ge {\rm{exp}}\left(-Cm^2({\rm{log}}(L)+m{\rm{log}}(m))(1+{\rm{log}}(m))^{-1}\right)
\end{align}
where ${\rm{L}}(P)=\sum_{i=0}^m|a_i|$ if $P(z)=\sum_{i=0}^m a_iz^{i}$.
\end{theorem}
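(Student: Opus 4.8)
\noindent\emph{Sketch of proof.}\quad The plan is to read $P(\log(\alpha))=\sum_{i=0}^{m}a_{i}\lambda^{i}$, where $\lambda=\log(\alpha)$ and $e^{\lambda}=\alpha$, as a linear form in $1,\lambda,\dots,\lambda^{m}$ with rational integer coefficients of absolute value at most $\mathrm{L}(P)\le L$, and to bound it from below by confronting it with explicit Pad\'e approximation forms in these same quantities. One may assume $m$ and $L$ larger than any prescribed bound depending only on $\alpha$, the remaining cases being covered by the transcendence of $\lambda$ with a suitable $C$; and it suffices to treat the case where $|\alpha-1|$ is as small as we please. Indeed, replacing $\alpha$ by $\alpha^{1/2^{t}}=e^{\lambda/2^{t}}$ for a fixed $t=t(\alpha)$ multiplies the degree of the ground field by $2^{t}=O_{\alpha}(1)$ and changes $(m,L)$ into $(m,\,2^{tm}L)$, so $\log(L)$ is only altered into $O(\log(L)+m)$, and the general inequality follows from the special one with a larger value of $C$.

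The engine is Hermite--Mahler's Pad\'e approximation. For a parameter $n$, to be optimized as a function of $m$ and $L$, one constructs explicitly polynomials $A_{0,n},\dots,A_{m,n}\in\Z[z]$ of degree at most $n$, not all zero, together with the remainder
\[
R_{n}(z)=\sum_{k=0}^{m}A_{k,n}(z)\log^{k}(1+z),
\]
which vanishes at $z=0$ to order at least $(m+1)n-O(m)$. Two estimates are then needed. The arithmetic one: the $A_{k,n}$ acquire integer coefficients after clearing a denominator dividing $\operatorname{lcm}(1,\dots,n)^{m}$, whose logarithm is $O(mn)$ by the prime number theorem; combined with the explicit binomial shape of Mahler's approximants this gives $\mathrm{H}(A_{k,n})\le e^{O(mn)}$. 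This is the crucial point: working with the logarithm rather than with repeated derivatives of the exponential keeps the common denominator at $e^{O(mn)}$ instead of $(n!)^{m}=e^{O(mn\log n)}$, and it is precisely this gain that yields the factor $(1+\log(m))^{-1}$ in the final bound. The analytic one: since $|\alpha-1|$ is small, Schwarz's lemma on a disc of radius slightly below $1$ gives $|R_{n}(\alpha-1)|\le e^{-c_{1}(m)\,n}$. As a companion, the Hermite--Mahler approximants of the \emph{exponential} function supply, through $e^{j\lambda}=\alpha^{j}\in K$, a second family of linear forms in $1,\lambda,\dots,\lambda^{m}$ whose remainders decay faster than any fixed power of $n$; this is what will convert ``small value'' into ``lower bound''.

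Specializing at $z=\alpha-1$ (so that $\log(1+(\alpha-1))=\lambda$) turns each $R_{n}(\alpha-1)$ into a linear form $\ell_{n}=\sum_{k=0}^{m}p_{k,n}\lambda^{k}$ with $p_{k,n}\in\mathcal{O}_{K}$ of house $\le e^{O(mn)}$ and $|\ell_{n}|\le e^{-c_{1}(m)\,n}$, once a harmless power of the denominator of $\alpha$ is absorbed. The key non-vanishing input is a zero estimate: for $n$ in a block of $m+1$ consecutive values the matrix $(p_{k,n})_{k,n}$ is non-singular, equivalently the polynomials $\tilde L_{n}(z)=\sum_{k}p_{k,n}z^{k}$ span all polynomials of degree $\le m$, which follows from the structure of the Pad\'e table. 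Supposing $|P(\lambda)|$ to lie below the claimed bound, one confronts $P$ with such a block: either some $\tilde L_{n}$ is coprime to an irreducible factor of $P$, so that $\operatorname{Res}(P,\tilde L_{n})$ is a nonzero element of $\mathcal{O}_{K}$ which Liouville's inequality bounds from below while the simultaneous smallness of $P(\lambda)$ and $\ell_{n}$ bounds it from above; or no such $n$ exists, forcing a linear relation among the $\tilde L_{n}$ that contradicts the zero estimate. Using the exponential approximants in the same way removes the residual term $e^{-c_{1}(m)\,n}$, and optimizing $n$ (together with the free parameter $t$) against $\log(L)$ and $m$ produces $|P(\lambda)|\ge\exp\bigl(-Cm^{2}(\log(L)+m\log(m))(1+\log(m))^{-1}\bigr)$.

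The main difficulty will be this last passage from approximations to a lower bound: one must establish the zero estimate so that the resultant above — or, alternatively, a determinant built from $P$ and $m$ consecutive approximants — is genuinely nonzero; one must choose the parameters so that the error coming from the merely bounded quality of the logarithmic forms is absorbed by the super-polynomially small remainders of the exponential forms, while tracking the arithmetic carefully enough that the denominator bound $e^{O(mn)}$, and not $e^{O(mn\log n)}$, is what enters the exponent; and one must handle the possibility that $P$ has several roots clustered near $\lambda$, by passing to the factor of $P$ that vanishes on that cluster, whose transcendence measure is a priori sharper. Estimating the house and denominator of the resultant and carrying out the final optimization are then routine.
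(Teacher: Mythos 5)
First, a point of order: the paper does not prove this statement. Theorem \ref{N-W} is imported verbatim from Nesterenko--Waldschmidt \cite{N-W} purely as a benchmark, and the paper's own theorems (Theorems \ref{power of log indep} and \ref{p power of log indep}) are different statements which improve on it only for $\alpha$ sufficiently close to $0$. So there is no proof in the paper to compare yours against, and I can only assess your sketch on its own terms.

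The decisive gap is your opening reduction. You claim one may assume $|\alpha-1|$ is ``as small as we please'' by passing to $\alpha^{1/2^{t}}$ for a \emph{fixed} $t=t(\alpha)$, at the cost of enlarging $C$ by a constant. But the Pad\'e-of-powers-of-logarithm machinery you then invoke requires $|\log\alpha|$ to be small \emph{relative to $m$}, not merely small: each conjugate row of coefficients contributes a factor of size $e^{(1+\log 2)mn}$ (coming from the $2^{(n+2)m}$ and $d_{n+1}^{m}(n+1)!^{m}/n!^{m}$ factors in Lemmas \ref{denominator} and \ref{keisu ookisa}), and after the $(m-1)$-fold product over cofactor rows this must be beaten by the remainder's gain $e^{mn\log(m/|\log\alpha|)}$. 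That forces roughly $|\log\alpha|\le m(2e)^{-(m-1)[K:\Q]/[K_\infty:\R]}$ --- this is precisely the content of the hypothesis $\delta(\alpha)>0$ in Theorem \ref{power of log indep}, which for a fixed $\alpha$ fails for every sufficiently large $m$. A fixed root extraction shrinks $|\log\alpha|$ only by the constant factor $2^{-t}$ and cannot reach this regime uniformly in $m$; letting $t$ grow with $m$ multiplies $[K:\Q]$ by $2^{t}$ (which makes the condition $\delta>0$ harder, since the sum $\sum_{k}\mathcal{A}^{(k)}$ has $[K:\Q]$ terms) and adds $tm\log 2$ to $\log L$, and controlling that trade-off is where the factor $(1+\log m)^{-1}$ would have to be extracted --- it is the entire content of the theorem, not a remark. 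Beyond this, the zero estimate for your block of approximants, the resultant argument, and the final optimization are asserted rather than proved, as you yourself concede under ``the main difficulty.'' Note also that since $P(\log\alpha)$ is already a linear form in $1,\log\alpha,\dots,\log^{m}\alpha$ with integer coefficients, the resultant and clustered-roots machinery is an unnecessary detour: a non-singular $(m+1)\times(m+1)$ system of approximating forms combined with the product formula, as in Proposition \ref{critere}, is the natural instrument, and it sidesteps the degeneracies you worry about.
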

The purpose of the present article is to give an improvement of Theorem $\ref{N-W}$ for algebraic numbers $\alpha$ which are sufficiently close to $1$ and give a $p$-adic version of the result.
The main integrant of the proof of these results is Hermite-Pad\'{e} approximation of exponential and logarithm functions.
\section{Notations and main results}
We collect some notations which we use throughout this article. 
For a prime number $p$, we denote the $p$-adic number field by $\Q_p$, the $p$-adic completion of a fixed algebraic closure of $\Q_p$ by $\C_p$
and the normalized $p$-adic valuation on $\C_p$ by $$|\cdot|_p:\C_p\longrightarrow \R_{\ge0}, \ |p|=p^{-1}.$$ 
We fix an algebraic closure of $\Q$ and denote it by $\overline{\Q}$. We define the denominator function by 
$${\rm{den}}:\overline{\Q}\longrightarrow \N, \ \alpha\mapsto \min\{n\in\N\mid n\alpha \ \text{is an algebraic integer}\}.$$
We fix embeddings $\sigma:\overline{\Q}\hookrightarrow \C$ and $\sigma_{p}:\overline{\Q}\hookrightarrow \C_p$. 
For an algebraic number field $K$, we consider $K$ as a subfield of $\overline{\Q}$ and denote the ring of integers of $K$ by $\mathcal{O}_K$.
For $\alpha\in K$, we denote $\sigma(\alpha)=\alpha$, $\sigma_{p}(\alpha)=\alpha$ and the set of conjugates of $\alpha$ by $\{\alpha^{(k)}\}_{1\le k \le [K:\Q]}$ with $\alpha^{(1)}=\alpha$ and $\alpha^{(2)}$ is the complex conjugate of $\alpha$ if $\sigma(K)\not\subset \R$. 
We denote the set of places of $K$ (resp. infinite places, finite places) by $M_{K}$ (resp. $M^{\infty}_{K}$, $M^{f}_{K}$).
For $v\in M_{K}$, we denote the completion of $K$ with respect to $v$ by $K_v$.
For $v\in M_{K}$, we define the normalized absolute value $| \cdot |_v$ as follows:
\begin{align*}
&|p|_v:=p^{-\tfrac{[K_v:\Q_p]}{[K:\Q]}} \ \text{if} \ v\in M^{f}_{K} \ \text{and} \ v|p,\\
&|x|_v:=|\sigma_v x|^{\tfrac{[K_v:\R]}{[K:\Q]}} \ \text{if} \ v\in M^{\infty}_{K},
\end{align*}
where $\sigma_v$ is the embedding $K\hookrightarrow \C$ corresponding to $v$. 
Then we have the product formula 
\begin{align*} 
\prod_{v\in M_{K}} |\xi|_v=1 \ \text{for} \ \xi \in K\setminus\{0\}.
\end{align*}

\

Let $m$ be a natural number and $\boldsymbol{\beta}:=(\beta_0,\ldots,\beta_m) \in K^{m+1} \setminus\{\bold{0}\}$. 
We define the absolute height of 
$\boldsymbol{\beta}$ by
\begin{align*}
&\mathrm{H}(\boldsymbol{\beta}):=\prod_{v\in M_{K}} \max\{1, |\beta_0|_v,\ldots,|\beta_m|_v\}.
\end{align*}
Note that, for $\boldsymbol{\beta}=(\beta_0,\ldots,\beta_{m})\in \mathcal{O}^{m+1}_K \setminus\{\bold{0}\}$, we have $\mathrm{H}(\boldsymbol{\beta})=\prod_{v\in M^{\infty}_{K}} \max\{1, |\beta_0|_v,\ldots, |\beta_m|_v\}$ and 
\begin{align} \label{important equal}
\prod_{k=1}^{[K:\Q]} \max\{1, |\beta^{(k)}_0|,\ldots, |\beta^{(k)}_m|\}=\mathrm{H}(\boldsymbol{\beta})^{[K:\Q]}.
\end{align}

\

Let ${\rm{log}}:\C\setminus\R_{\le 0}\longrightarrow \C$ be the principal value logarithm function and ${\rm{log}}_p:\C_p\setminus\{0\} \longrightarrow \C_p$ the $p$-adic logarithm function, i.e. 
${\rm{log}}_p$ is a $p$-adic locally analytic function satisfying the following conditions:
\begin{align*}
&({\rm{i}}) \ {\rm{log}}_p(1+z)=\sum_{k=1}^{\infty}\dfrac{(-1)^{k+1}z^k}{k} \ \text{if} \ |z|_p<1,\\
&({\rm{ii}}) \ {\rm{log}}_p(xy)={\rm{log}}_p(x)+{\rm{log}}_p(y) \ \text{for} \ x,y\in \C_p\setminus\{0\},\\
&({\rm{iii}}) \ {\rm{log}}_p(p)=0.
\end{align*}
Under the above notations, we shall prove the following results.
\begin{theorem}\label{power of log indep}
Let $m\in \Z_{\ge2}$, $K$ be an algebraic number field and $\alpha\in K\setminus\{0,-1\}$.  
We define the real numbers
\begin{align*}
&T(\alpha)={\rm{exp}}\left({\dfrac{2|{\rm{log}}(1+\alpha)|}{1+\sqrt{1+4|{\rm{log}}(1+\alpha)|}}}\right)(m-1)!,\\
&T^{(k)}(\alpha)=\dfrac{2^m(1+|\alpha^{(k)}|)(m-1)!}{|\alpha^{(k)}|} \ \text{for} \ 1\le k \le [K:\Q], \\
&\mathcal{A}^{(k)}(\alpha)=m(1+{\rm{log}}(2))+{\rm{log}}({\rm{den}}(\alpha))+{\rm{log}}(1+|\alpha^{(k)}|) \ \text{for} \ 1\le k \le [K:\Q],\\
&A(\alpha)=m{\rm{log}}\left(\dfrac{m}{|{\rm{log}}(1+\alpha)|}\right)-\left(\dfrac{m(1+\sqrt{1+4|{\rm{log}}(1+\alpha)|})}{2}+\dfrac{2|{\rm{log}}(1+\alpha)|}{1+\sqrt{1+4|{\rm{log}}(1+\alpha)|}} \right)-{\rm{log}}({\rm{den}}(\alpha)), \\
&\nu(\alpha):=A(\alpha)+\mathcal{A}^{(1)}(\alpha),\\
&\delta(\alpha):=A(\alpha)+\mathcal{A}^{(1)}(\alpha)-\dfrac{(m-1)\sum_{k=1}^{[K:\Q]}\mathcal{A}^{(k)}(\alpha) }{[K_{\infty}:\R]},
\end{align*}
where $K_{\infty}$ is the completion of $K$ with respect to the fixed embedding $\sigma:K\hookrightarrow \C$.
We assume $\dfrac{m}{|{\rm{log}}(1+\alpha)|}\ge 4$ and $\delta(\alpha)>0$, 
then the numbers $1, {\rm{log}}(1+\alpha), \ldots, {\rm{log}}^{m-1}(1+\alpha)$ are linearly independent over $K$. 
For any $\epsilon>0$, we take a natural number $n$ satisfying
\begin{align*}
&m\left[\sqrt{{\rm{log}}n}\cdot {\rm{exp}}\left(\dfrac{(\sqrt{322}-\sqrt{546})\sqrt{{\rm{log}}n}}{\sqrt{515}}\right)\right]<\dfrac{[K_{\infty}:\R]\delta^2(\alpha)\epsilon}{2(m-1)[K:\Q](2\nu(\alpha)+\epsilon\delta(\alpha))},\\
&{\rm{log}}\left(T(\alpha)n^{\tfrac{m}{2}} \left[\prod_{k=1}^{[K:\Q]}m!(T^{(k)}(\alpha))^{m-1}n^{2m(m-1)}\right]^{\tfrac{1}{[K_{\infty}:\R]}}\right)\le \dfrac{\epsilon \delta^2(\alpha)n}{4(2\nu(\alpha)+\epsilon \delta(\alpha))}.
\end{align*}
Then $H_0=\left(\dfrac{1}{2}{\rm{exp}}[{\delta(\alpha) n}]\right)^{\tfrac{[K_{\infty}:\R]}{[K:\Q]}}$ satisfies the following property$:$

For any $\boldsymbol{\beta}:=(\beta_0,\ldots,\beta_{m-1}) \in \mathcal{O}^{m}_K \setminus \{ \bold{0} \}$ satisfying $H_0<\mathrm{H}(\boldsymbol{\beta})\le H$, then we have
\begin{align*}
{\rm{log}}\left(\left|\sum_{i=0}^{m-1}\beta_i{\rm{log}}^i(1+\alpha)\right|\right)
>-\left(\dfrac{[K:\Q]\nu(\alpha)}{[K_{\infty}:\R]\delta(\alpha)}+\dfrac{\epsilon[K:\Q]}{2[K_{\infty}:\R]}\right){\rm{log}}(H).
\end{align*}
\end{theorem}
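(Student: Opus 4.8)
The plan is to construct explicit simultaneous Pad\'e approximations to the functions $1,{\rm{log}}(1+z),\ldots,{\rm{log}}^{m-1}(1+z)$ at $z=0$ via the Hermite--Mahler construction, and then evaluate them at $z=\alpha$. Concretely, for each approximation order $n$ one obtains polynomials $P_{n,0}(z),\ldots,P_{n,m-1}(z)\in K[z]$, not all zero, together with a remainder $R_n(z)=\sum_{i=0}^{m-1}P_{n,i}(z){\rm{log}}^i(1+z)$ that vanishes to high order at $z=0$, so that $|R_n(\alpha)|$ is extremely small while the $P_{n,i}(\alpha)$ and their conjugates, together with a common denominator $D_n\in\Z_{>0}$, remain controlled in size. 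The quantities $T(\alpha)$ and $A(\alpha)$ govern the archimedean size and vanishing order of $R_n$ at the place $\sigma$ (the factor ${\rm{exp}}(2|{\rm{log}}(1+\alpha)|/(1+\sqrt{1+4|{\rm{log}}(1+\alpha)|}))$ and the expression $A(\alpha)$ are exactly the saddle-point optimum of the integral representation of the remainder), while $T^{(k)}(\alpha)$ and $\mathcal A^{(k)}(\alpha)$ bound the coefficients $P_{n,i}(\alpha^{(k)})$ at the conjugate places and the growth of the denominators (the $m(1+{\rm{log}}(2))$ coming from the $2^m$-type binomial factors and $(m-1)!$, the ${\rm{log}}({\rm{den}}(\alpha))$ from clearing denominators of powers of $\alpha$). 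The hypothesis $m/|{\rm{log}}(1+\alpha)|\ge 4$ is what makes $\delta(\alpha)>0$ possible and what guarantees the saddle point lies in the convergence regime.

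Next I would set up the linear-independence / lower-bound mechanism in the standard Siegel--Shidlovskii style adapted to a single place. Suppose $\boldsymbol\beta=(\beta_0,\ldots,\beta_{m-1})\in\mathcal O_K^m\setminus\{\bold0\}$ with $H_0<{\rm{H}}(\boldsymbol\beta)\le H$, and set $\Lambda=\sum_{i=0}^{m-1}\beta_i{\rm{log}}^i(1+\alpha)$; assume for contradiction that $|\Lambda|$ is too small, i.e. smaller than the claimed bound. Using that the matrix $(P_{n,i})$ for two consecutive indices $n$ (or an $m\times m$ block built from $P_{n},\ldots,P_{n+m-1}$) is nonsingular --- this is the crucial nonvanishing of the Pad\'e determinant, which for the exponential/logarithm system follows from Hermite's classical argument --- one finds an $n$ for which the algebraic integer
\[
\Xi_n:=D_n\Bigl(\textstyle\sum_{i=0}^{m-1}\beta_i P_{n,i}(\alpha)\Bigr)
\]
is nonzero. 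Then one applies the product formula (the Liouville inequality) to $\Xi_n\in\mathcal O_K\setminus\{0\}$: $1=\prod_{v}|\Xi_n|_v\le |\Xi_n|_\sigma\cdot\prod_{v\ne\sigma}|\Xi_n|_v$. At the place $\sigma$ one writes $\sum_i\beta_iP_{n,i}(\alpha)=R_n(\alpha)-\sum_i\beta_i(\text{tail})$; more precisely one uses that $\Lambda$ appears as a combination and that $|R_n(\alpha)|$ is small, so $|\Xi_n|_\sigma\lesssim D_n\bigl(\max_i|\beta_i|\cdot|\Lambda| + |R_n(\alpha)|\bigr)$. The other archimedean places contribute $\prod_{k\ge 2}\bigl(D_n^{1/[K:\Q]}\max_i|P_{n,i}(\alpha^{(k)})|\,\max_i|\beta_i^{(k)}|\bigr)$, bounded using $T^{(k)}(\alpha),\mathcal A^{(k)}(\alpha)$ and $\prod_k\max\{1,|\beta_i^{(k)}|\}={\rm{H}}(\boldsymbol\beta)^{[K:\Q]}$ from $(\ref{important equal})$, and the finite places contribute nothing beyond $D_n$ (the $P_{n,i}$ having coefficients in $\mathcal O_K$ up to the denominator already absorbed in $D_n$).

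Combining these estimates yields an inequality of the shape $0\le {\rm{log}}|\Lambda| + (\text{stuff growing like }\nu(\alpha)n) - \delta(\alpha)n + C\log{\rm{H}}(\boldsymbol\beta)/[K:\Q]\cdot(\cdots)$, and the delicate part is the choice of $n$ as a function of $H$: one picks $n$ roughly so that $\delta(\alpha)n\approx (1+\epsilon')\,[K:\Q]{\rm{log}}(H)/[K_\infty:\R]$, which is precisely why $H_0=(\tfrac12{\rm{exp}}[\delta(\alpha)n])^{[K_\infty:\R]/[K:\Q]}$ appears as the threshold below which the argument cannot start. The two explicit smallness conditions on $n$ in the statement are exactly the bookkeeping needed to absorb (a) the error term $m\lceil\sqrt{{\rm{log}}n}\,{\rm{exp}}(\cdots)\rceil$ coming from the gap between consecutive usable Pad\'e orders (the determinant/nonvanishing step forces one to move $n$ by a controlled amount, and the odd-looking constants $\sqrt{322},\sqrt{546},\sqrt{515}$ come from optimizing that gap), and (b) the ${\rm{log}}$ of the product $T(\alpha)n^{m/2}[\prod_k m!(T^{(k)})^{m-1}n^{2m(m-1)}]^{1/[K_\infty:\R]}$, which collects all the lower-order polynomial-in-$n$ factors in the size estimates. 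Solving the resulting inequality for ${\rm{log}}|\Lambda|$ gives ${\rm{log}}|\Lambda|>-\bigl(\tfrac{[K:\Q]\nu(\alpha)}{[K_\infty:\R]\delta(\alpha)}+\tfrac{\epsilon[K:\Q]}{2[K_\infty:\R]}\bigr){\rm{log}}(H)$, as claimed, and in particular $\Lambda\ne0$, giving linear independence. The main obstacle, and the step deserving the most care, is the nonvanishing of the Pad\'e determinant together with the quantitative control of the ``jump'' in $n$ needed to guarantee $\Xi_n\ne0$ for a suitable $n$ in the right range --- everything else is (lengthy but routine) estimation of explicit integrals and products.
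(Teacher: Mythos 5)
Your overall architecture (Hermite--Mahler Pad\'e approximants of $1,\log(1+z),\ldots,\log^{m-1}(1+z)$ obtained from Hermite's approximants of the exponentials, a non-vanishing determinant, the product formula, and an optimization of $n$ against $\mathrm{H}(\boldsymbol\beta)$) is indeed the paper's architecture, but the object you feed into the product formula is the wrong one, and this breaks the core of the argument. You set $\Xi_n:=D_n\sum_{i}\beta_iP_{n,i}(\alpha)$ and claim $|\Xi_n|_\sigma\lesssim D_n\bigl(\max_i|\beta_i|\cdot|\Lambda|+|R_n(\alpha)|\bigr)$ via the identity ``$\sum_i\beta_iP_{n,i}(\alpha)=R_n(\alpha)-\sum_i\beta_i(\text{tail})$''. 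No such identity exists: $R_n(\alpha)=\sum_iP_{n,i}(\alpha)\log^i(1+\alpha)$ pairs the coefficient vector with $\boldsymbol\theta=(\log^i(1+\alpha))_i$, whereas $\Xi_n$ pairs it with $\boldsymbol\beta$; the scalar $\Xi_n$ simply does not involve $\Lambda$, so a Liouville lower bound for $\Xi_n$ yields no information about $\Lambda$. (The pairing $\sum_i\beta_ip_{n,i}$ is the right object only for type II, i.e.\ simultaneous, approximations where each $p_{n,0}\theta_i-p_{n,i}$ is small; what you constructed is a single type I form.) The correct mechanism is the paper's Proposition \ref{critere}: for each $n$ one takes the $m$ linear forms coming from the $m$ shifted weight vectors $\mathbf{n}_1,\ldots,\mathbf{n}_m$, whose coefficient matrix $(a_{i,j,n+1}(\alpha))$ is invertible by Corollary \ref{key corollary}; one replaces a suitable row by $\boldsymbol\beta$ so that the resulting determinant $\Theta_{\boldsymbol\beta,n}\in\mathcal{O}_K\setminus\{0\}$; column operations then turn one column into the vector of small remainders $R_{i,n+1}(\alpha)$ with $\Lambda$ sitting in the replaced row, and expanding along that column is what produces $|\Theta^{(1)}_{\boldsymbol\beta,n}|\le(\text{small})+|\Lambda|\cdot(\text{controlled})$. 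Your argument needs this determinant (or an equivalent use of all $m$ independent forms at once), not the scalar $\Xi_n$.

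Two smaller corrections. First, no ``jump in $n$'' is needed to secure non-vanishing: Lemma \ref{non vanishing of det} gives $\Delta^{(n+1)}(z)=\gamma z^{(n+1)m}$ with $\gamma\neq 0$ for \emph{every} $n$, because the $m$ approximant tuples attached to the weights $\mathbf{n}_i$ at a fixed $n$ already form an invertible matrix; this rests on the normality of the relevant indices (Proposition \ref{diagonal normality log}), proved by transporting perfectness of the exponential system through $z\mapsto e^z-1$. Second, the constants $\sqrt{322},\sqrt{546},\sqrt{515}$ have nothing to do with optimizing a gap between usable Pad\'e orders; they come from the Rosser--Schoenfeld estimate $\exp(n-g(n))\le d_n\le\exp(n+g(n))$ for $d_n=\mathrm{l.c.m.}(1,\ldots,n)$, which enters because the partial-fraction coefficients $a_{h,j}(\mathbf{m}_i,\boldsymbol\omega)$ have denominators dividing $d_{n+1}^m(n+1)!^m$ (Lemma \ref{denominator}); the term $f(n)=mg(n)=o(n)$ is exactly the resulting correction to the exponential growth rates, and it is what the first displayed condition on $n$ in the statement is absorbing.
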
 
We will prove Theorem $\ref{p power of log indep}$ in Section $6.1$. 
In the case of $\alpha$ is a rational number, we obtain the following corollary.
\begin{corollary} \label{corollary main theorem}
Let $m\in \mathbb{Z}_{\ge 2}$, $\epsilon>0$ and $\alpha=c/d\in {\mathbb{Q}}\setminus\{0,-1\}$ with $(c,d)=1$ and $d>0$.
Put
\begin{align*}
\nu(\alpha):=&m{\rm{log}}\left(\dfrac{m}{|{\rm{log}}(1+c/d)|}\right)-
\left(\dfrac{m(1+\sqrt{1+4|{\rm{log}}(1+c/d)|})^2+4|{\rm{log}}(1+c/d)|}{2(1+\sqrt{1+4|{\rm{log}}(1+c/d)|)}}\right)\\
&-{\rm{log}}(d)+m(1+{\rm{log}}(2))+{\rm{log}}(d)+{\rm{log}}(1+\left|{c}/{d}\right|).\\
\delta(\alpha):=&{m{\rm{log}}\left(\dfrac{m}{|{\rm{log}}(1+c/d)|}\right)}-
\left(\dfrac{m(1+\sqrt{1+4|{\rm{log}}(1+c/d)|})^2+4|{\rm{log}}(1+c/d)|}{2(1+\sqrt{1+4|{\rm{log}}(1+c/d)|)}}\right)\\
                     &{-{\rm{log}}(d)}-{(m-2)}\left(m(1+{\rm{log}}(2))+{{\rm{log}}(d)}+{\rm{log}}(1+\left|{c}/{d}\right|)\right).
\end{align*}
Suppose $\delta(\alpha)>0$. Then we have

$({\rm{i}})$  The complex numbers $1,{\rm{log}}(1+\alpha),\ldots,{\rm{log}}^{m-1}(1+\alpha)$ are linearly independent over $\Q$.

$({\rm{ii}})$ Let $n=n(\epsilon)$ be a natural number satisfying
\begin{align*}
&m\left[\sqrt{{\rm{log}}n}\cdot {\rm{exp}}\left(\dfrac{(\sqrt{322}-\sqrt{546})\sqrt{{\rm{log}}n}}{\sqrt{515}}\right)\right]<\dfrac{\delta^2(\alpha)\epsilon}{2(m-1)(2\nu(\alpha)+\epsilon\delta(\alpha))},\\
&{\rm{log}}\left(2^{(2m+1)(m-1)}(m!)^{m+1}(m-1)!^{m} \left(\dfrac{(d+|c|)}{|c|}\right)^{2(m-1)}\right)+
\left(\dfrac{2|{\rm{log}}(1+\tfrac{c}{d})|}{1+\sqrt{1+4|{\rm{log}}(1+\tfrac{c}{d})|}}\right)\\
&+\left(\dfrac{m}{2}+2m(m-1)\right){\rm{log}}(n)<\dfrac{\epsilon\delta(\alpha)^2n}{4(2\nu(\alpha)+\epsilon\delta(\alpha))}.
\end{align*}
Then for 
$H_0:=\tfrac{1}{2}{\rm{exp}}\left(\delta(\alpha)n\right)$ and 
$\bold{b}=(b_0,b_1,\ldots,b_{m-1}) \in \mathbb{Z}^{m}\setminus\{\bold{0}\}$ of $H_0<\mathrm{H}(\bold{b})\leq H$, we have
$${\rm{log}}\left(\left|\sum_{i=0}^{m-1}b_i{\rm{log}}^i(1+\alpha)\right|\right)>-\left( \dfrac{\nu(\alpha)}{\delta(\alpha)}+\dfrac{\epsilon}{2}\right) \cdot {\rm{log}} H.$$
\end{corollary}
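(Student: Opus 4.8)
The plan is to derive Corollary \ref{corollary main theorem} as the specialization of Theorem \ref{power of log indep} to $K=\mathbb{Q}$. First I would record the simplifications coming from this choice of field: $[K:\mathbb{Q}]=1$; the unique archimedean place of $\mathbb{Q}$ is real, so $[K_{\infty}:\mathbb{R}]=1$; and $\mathcal{O}_K=\mathbb{Z}$, so $\mathcal{O}_K^{m}\setminus\{\mathbf{0}\}=\mathbb{Z}^{m}\setminus\{\mathbf{0}\}$ and $\mathrm{H}(\mathbf{b})$ agrees with the naive height $\max_i|b_i|$ for $\mathbf{b}\in\mathbb{Z}^m$. Writing $\alpha=c/d$ with $(c,d)=1$ and $d>0$ gives $\mathrm{den}(\alpha)=d$, and $\alpha$ has the single conjugate $\alpha^{(1)}=\alpha$.

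Next I would substitute these data into the quantities defined in Theorem \ref{power of log indep}. Setting $L:=|\log(1+c/d)|$, one gets $T(\alpha)=\exp\!\big(\tfrac{2L}{1+\sqrt{1+4L}}\big)(m-1)!$, $T^{(1)}(\alpha)=\tfrac{2^{m}(d+|c|)(m-1)!}{|c|}$ (the two factors $d$ cancel), $\mathcal{A}^{(1)}(\alpha)=m(1+\log 2)+\log d+\log(1+|c/d|)$, and $A(\alpha)$ as written. Rewriting the bracketed term of $A(\alpha)$, namely $\tfrac{m(1+\sqrt{1+4L})}{2}+\tfrac{2L}{1+\sqrt{1+4L}}$, over the common denominator $2(1+\sqrt{1+4L})$ gives $\tfrac{m(1+\sqrt{1+4L})^{2}+4L}{2(1+\sqrt{1+4L})}$, after which $\nu(\alpha)=A(\alpha)+\mathcal{A}^{(1)}(\alpha)$ and $\delta(\alpha)=A(\alpha)+\mathcal{A}^{(1)}(\alpha)-(m-1)\mathcal{A}^{(1)}(\alpha)=A(\alpha)-(m-2)\mathcal{A}^{(1)}(\alpha)$ become precisely the two displayed formulas of the corollary. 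I would then check that the standing hypotheses of Theorem \ref{power of log indep} are met: $\delta(\alpha)>0$ is assumed, and $m/|\log(1+\alpha)|\ge 4$ is automatic, since if $m/L<4$ then $A(\alpha)<m(\log 4-1)$ while $(m-2)\mathcal{A}^{(1)}(\alpha)\ge(m-2)m(1+\log 2)$, which already forces $\delta(\alpha)<0$ for every $m\ge 2$ (the case $m=2$, where $\delta(\alpha)=A(\alpha)$, being handled by the monotonicity of $A(\alpha)$ in $L$). Hence Theorem \ref{power of log indep} applies and yields (i) together with the existence of the $H_0$ of (ii).

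Finally I would translate the two conditions on $n$ and the conclusion. With all degree factors equal to $1$, the first condition of Theorem \ref{power of log indep} becomes verbatim the first inequality of the corollary. For the second, the product over conjugates collapses to one factor and $T(\alpha)n^{m/2}m!\,(T^{(1)}(\alpha))^{m-1}n^{2m(m-1)}$ equals $\exp\!\big(\tfrac{2L}{1+\sqrt{1+4L}}\big)\,2^{m(m-1)}m!\,(m-1)!^{m}\big(\tfrac{d+|c|}{|c|}\big)^{m-1}n^{m/2+2m(m-1)}$; since $2^{m(m-1)}\le 2^{(2m+1)(m-1)}$, $m!\le(m!)^{m+1}$, and $(\tfrac{d+|c|}{|c|})^{m-1}\le(\tfrac{d+|c|}{|c|})^{2(m-1)}$, the corollary's second inequality (whose right-hand side $\tfrac{\epsilon\delta^2(\alpha)n}{4(2\nu(\alpha)+\epsilon\delta(\alpha))}$ is the same as the theorem's) is a sufficient condition for the theorem's. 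Then $H_0=\big(\tfrac{1}{2}\exp[\delta(\alpha)n]\big)^{[K_{\infty}:\mathbb{R}]/[K:\mathbb{Q}]}=\tfrac{1}{2}\exp(\delta(\alpha)n)$, and the conclusion of Theorem \ref{power of log indep} reads $\log\big|\sum_{i=0}^{m-1}b_i\log^i(1+\alpha)\big|>-\big(\tfrac{\nu(\alpha)}{\delta(\alpha)}+\tfrac{\epsilon}{2}\big)\log H$, which is the assertion of (ii). The only substantial part is this constant-matching bookkeeping; the one spot requiring an inequality argument rather than plain substitution is the verification that $\delta(\alpha)>0$ forces $m/|\log(1+\alpha)|\ge 4$, and I expect that to be the main (minor) obstacle.
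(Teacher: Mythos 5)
Your proposal is correct and follows exactly the route the paper intends: the corollary is simply Theorem \ref{power of log indep} specialized to $K=\mathbb{Q}$ (so $[K:\mathbb{Q}]=[K_\infty:\mathbb{R}]=1$, $\mathrm{den}(c/d)=d$), and your constant-matching --- the common-denominator rewriting of $A(\alpha)$, $\delta(\alpha)=A(\alpha)-(m-2)\mathcal{A}^{(1)}(\alpha)$, and the weakening $2^{m(m-1)}m!\,(m-1)!^{m}\bigl(\tfrac{d+|c|}{|c|}\bigr)^{m-1}\le 2^{(2m+1)(m-1)}(m!)^{m+1}(m-1)!^{m}\bigl(\tfrac{d+|c|}{|c|}\bigr)^{2(m-1)}$ --- reproduces the stated conditions. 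Your one piece of genuine content, namely that the theorem's unstated hypothesis $m/|\log(1+\alpha)|\ge 4$ is forced by $\delta(\alpha)>0$, checks out: for $m\ge 3$ one has $A(\alpha)<m(\log 4-1)<(m-2)m(1+\log 2)\le(m-2)\mathcal{A}^{(1)}(\alpha)$ when $m/|\log(1+\alpha)|<4$, and for $m=2$ the quantity $A(\alpha)$ is decreasing in $|\log(1+\alpha)|$ and already negative at $|\log(1+\alpha)|=1/2$.
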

\begin{remark}
We show that Corollary $\ref{corollary main theorem}$ gives an improvement of Theorem $\ref{N-W}$ for $1+\alpha\in \Q\setminus\{1,0\}$ which are sufficiently close to $1$ and $m\ge 3$. 
We compare the result of Theorem $\ref{N-W}$ with that of Corollary $\ref{corollary main theorem}$.
Let $m\in \mathbb{Z}_{\ge 2}$, $\epsilon>0$ and $\alpha=c/d\in {\mathbb{Q}}\setminus\{0,-1\}$ with $(c,d)=1$ and $d>0$.
For $\bold{b}=(b_0,b_1,\ldots,b_{m-1}) \in \mathbb{Z}^{m}\setminus\{\bold{0}\}$, by Theorem $\ref{N-W}$,
we obtain
\begin{align*}
{\rm{log}}\left(\left|\sum_{i=0}^{m-1}b_i{\rm{log}}^i(1+\alpha)\right|\right)\ge -C(\alpha)(m-1)^2(1+{\rm{log}}(m-1))^{-1}((m-1){\rm{log}}(m-1)+{\rm{log}}(L)),
\end{align*}
where $C(\alpha)$ is a positive number depending on $\alpha$ with $C(\alpha)>105500\cdot e^{{\rm{H}}(\alpha)}$ for $\mathrm{H}(\bold{b})\le H$.
Since we have $$-C(\alpha)(m-1)^2{\rm{log}}(H)\ge -C(\alpha)(m-1)^2(1+{\rm{log}}(m-1))^{-1}((m-1){\rm{log}}(m-1)+{\rm{log}}(L)),$$ 
we compare $C(\alpha)(m-1)^2$ and $\nu(\alpha)/\delta(\alpha)$.
Since we have 
\begin{align*}
\dfrac{\nu(\alpha)}{\delta(\alpha)}&\approx \dfrac{m\left({\rm{log}}(m)-{\rm{log}}(|c|)+{\rm{log}}(d)+{\rm{log}}(2)\right)}{{\rm{log}}(d)+m({\rm{log}}(m)-{\rm{log}}(|c|)-1-(m-2){\rm{log}}(2))}
                                              \approx m,
\end{align*}
if $|\alpha|=|c|/d$ is sufficiently close to $0$, Corollary  $\ref{corollary main theorem}$ improves Theorem $\ref{N-W}$ for $1+\alpha\in \Q\setminus\{1,0\}$ which are sufficiently close to $1$ and $m\ge 3$. 
\end{remark}
\bigskip
Second, we introduce a $p$-adic version of Theorem $\ref{power of log indep}$.
\begin{theorem}\label{p power of log indep}
Let $m\in\Z_{\ge2}$, $p$ be a prime number, $K$ an algebraic number field and $\alpha\in K\setminus\{0,-1\}$ with $|\alpha|_p<1$.  
We use the same notations as in Theorem $\ref{power of log indep}$.
We also define the real numbers
\begin{align*}
&T_p(\alpha)=\dfrac{(2m)^{m-1}}{|\alpha|_p},\\
&A_p(\alpha)=-m{\rm{log}}(|\alpha|_p),\\
&\nu_p(\alpha)=A_p(\alpha),\\
&\delta_p(\alpha)=A_p(\alpha)-\dfrac{(m-1)\sum_{k=1}^{[K:\Q]}\mathcal{A}^{(k)}(\alpha)}{[K_{\infty}:\R]}.
\end{align*}
We assume $\delta_p(\alpha)>0$, 
then the numbers $1, {\rm{log}}_p(1+\alpha), \ldots, {\rm{log}}^{m-1}_p(1+\alpha)$ are linearly independent over $K$. 
For any $\epsilon>0$, we take a natural number $n$ satisfying
\begin{align*}
&\dfrac{1}{{\rm{log}}|\alpha|^{-1}_p}+\dfrac{1}{m}\le n,\\
&mn\left[\sqrt{{\rm{log}}n}\cdot {\rm{exp}}\left(\dfrac{(\sqrt{322}-\sqrt{546})\sqrt{{\rm{log}}n}}{\sqrt{515}}\right)\right] \le \dfrac{\epsilon \delta^2_p(\alpha)[K_{p}:\Q_p]n}{2(m-1)(2\nu_p(\alpha)+\epsilon \delta_p(\alpha))[K:\Q]},\\
&{\rm{log}}\left(T_{p}(\alpha)n^{m-1} \left[\prod_{k=1}^{[K:\Q]}m!(T^{(k)}(\alpha))^{m-1} n^{2m(m-1)}\right]^{\tfrac{1}{[K_{p}:\Q_p]}}\right) \le \dfrac{\epsilon \delta^2_p(\alpha)n}{4 (2\nu_p(\alpha)+\epsilon \delta_p(\alpha)},
\end{align*}
where $K_{p}$ is the completion of $K$ with respect to the fixed embedding $\sigma_p:K\hookrightarrow \C_p$.
Then $H_0=\left(\dfrac{1}{2}{\rm{exp}}[{\delta_p(\alpha) n}]\right)^{\tfrac{[K_{p}:\Q_p]}{[K:\Q]}}$ satisfies the following property$:$

For any $\boldsymbol{\beta}:=(\beta_0,\ldots,\beta_{m-1}) \in \mathcal{O}^{m}_K \setminus \{ \bold{0} \}$ satisfying $H_0< \mathrm{H}(\boldsymbol{\beta})\le H$, then we have
\begin{align*}
{\rm{log}}\left(\left|\sum_{i=0}^{m-1}\beta_i{\rm{log}}^i_p(1+\alpha)\right|_p\right)>
-\left(\dfrac{[K:\Q]\nu_p(\alpha)}{[K_{p}:\Q_p]\delta_p(\alpha)}+\dfrac{\epsilon[K:\Q]}{2[K_p:\Q_p]}\right)
{\rm{log}}(H).
\end{align*}
\end{theorem}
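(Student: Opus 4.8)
The plan is to prove Theorem~$\ref{p power of log indep}$ by running, in the $p$-adic setting, the same Hermite--Mahler Padé construction that underlies Theorem~$\ref{power of log indep}$; only the archimedean upper bound for the approximation remainder gets replaced by the $p$-adic one, whose exponential rate in $n$ is $A_p(\alpha)=-m\,{\rm{log}}|\alpha|_p>0$ and whose subexponential factor is controlled by $T_p(\alpha)$. Concretely, for each integer $n$ with $\tfrac{1}{{\rm{log}}|\alpha|_p^{-1}}+\tfrac1m\le n$ one uses the Hermite--Mahler Padé approximants of the exponential and logarithm functions to produce numerator polynomials $\bigl(P_{n,i,j}(z)\bigr)_{0\le i,j\le m-1}$ over $K$, of degree $O(n)$ (for instance the Padé forms of the $m$ consecutive orders $n,\dots,n-m+1$ for the system $1,{\rm{log}}(1+z),\dots,{\rm{log}}^{m-1}(1+z)$), such that each remainder function $R_{n,j}(z)=\sum_{i=0}^{m-1}P_{n,i,j}(z)\,{\rm{log}}^{i}(1+z)$ vanishes at $z=0$ to order $\asymp mn$. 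Since $|\alpha|_p<1$ the relevant series converge $p$-adically, so specializing $z=\alpha$ gives, for $j=0,\dots,m-1$, a linear form $R_{n,j}(\alpha)=\sum_{i=0}^{m-1}P_{n,i,j}(\alpha)\,{\rm{log}}^{i}_p(1+\alpha)$ in $1,{\rm{log}}_p(1+\alpha),\dots,{\rm{log}}^{m-1}_p(1+\alpha)$ with coefficients in $K$.

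From the construction one reads off exactly the data occurring in the statement: (a) an archimedean size estimate of the shape $|P_{n,i,j}(\alpha^{(k)})|\le m!\,(T^{(k)}(\alpha))^{m-1}n^{2m(m-1)}e^{\mathcal A^{(k)}(\alpha)n}$, in which $m(1+{\rm{log}}2)$ comes from binomial coefficients, ${\rm{log}}({\rm{den}}(\alpha))$ from clearing the denominator of $\alpha$, and ${\rm{log}}(1+|\alpha^{(k)}|)$ from evaluating a polynomial of degree $O(n)$; (b) a $p$-adic estimate $|R_{n,j}(\alpha)|_p\le T_p(\alpha)\,n^{O(1)}|\alpha|_p^{\,mn}$ of exponential rate $A_p(\alpha)$, where $|\alpha|_p<1$ and the hypothesis $\tfrac{1}{{\rm{log}}|\alpha|_p^{-1}}+\tfrac1m\le n$ are what make the vanishing order dominate the factor $T_p(\alpha)=(2m)^{m-1}/|\alpha|_p$, in particular the $p$-adically large factorial denominators $|m!|_p^{-1}$; and (c) the non-vanishing of $\det\bigl(P_{n,i,j}(\alpha)\bigr)_{0\le i,j\le m-1}$, a ``perfect system'' statement for powers of the logarithm that is established once and for all in the construction. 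Items (a)--(c) are the $p$-adic analogues of the estimates already proved for Theorem~$\ref{power of log indep}$.

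Granting these inputs, the lower bound is the classical determinant plus product-formula argument. Fix $\boldsymbol\beta=(\beta_0,\dots,\beta_{m-1})\in\mathcal O_K^{m}\setminus\{\mathbf 0\}$, write $L(\boldsymbol\beta)=\sum_{i=0}^{m-1}\beta_i\,{\rm{log}}^{i}_p(1+\alpha)$, and let $v_p\in M^{f}_K$ be the place attached to $\sigma_p$, so $K_{v_p}=K_p$. Since the rows $\boldsymbol P_{n,j}=\bigl(P_{n,0,j}(\alpha),\dots,P_{n,m-1,j}(\alpha)\bigr)$ form a $K$-basis of $K^{m}$, there is an index $j_0$ for which replacing $\boldsymbol P_{n,j_0}$ by $\boldsymbol\beta$ leaves the matrix invertible; let $\Delta\in\mathcal O_K\setminus\{0\}$ be (a fixed $\mathcal O_K$-multiple of) its determinant. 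Applying Cramer's rule to recover the entry $1={\rm{log}}^{0}_p(1+\alpha)$ in the solution of the system with matrix $(\boldsymbol P_{n,j})_j$ rewrites $\Delta$ as a determinant whose first column consists of the $R_{n,j}(\alpha)$ for $j\ne j_0$ and of $L(\boldsymbol\beta)$; expanding along that column and estimating $p$-adically, using $|\beta_i|_{v_p}\le1$ and $|\alpha|_p<1$ to bound the cofactors, gives $|\Delta|_{v_p}\le c(m)\max\bigl(|L(\boldsymbol\beta)|_p,\ \max_{j}|R_{n,j}(\alpha)|_p\bigr)$. On the other hand, the product formula $\prod_{v\in M_K}|\Delta|_v=1$ together with $|\Delta|_v\le1$ at every finite place yields $|\Delta|_{v_p}\ge\bigl(\prod_{v\in M^{\infty}_K}|\Delta|_v\bigr)^{-1}$; and since $\Delta$ has one row $\boldsymbol\beta$ and $m-1$ numerator rows, Laplace expansion together with $(\ref{important equal})$ bounds $\prod_{v\in M^{\infty}_K}|\Delta|_v$ by $\mathrm H(\boldsymbol\beta)^{[K_\infty:\R]/[K:\Q]}$ times the suitably normalized $(m-1)$-st power of the archimedean sizes in (a). Taking logarithms, it is precisely this normalization that produces the combination $\delta_p(\alpha)=A_p(\alpha)-\tfrac{(m-1)\sum_k\mathcal A^{(k)}(\alpha)}{[K_\infty:\R]}$ and the local-degree coefficient $\tfrac{[K:\Q]}{[K_p:\Q_p]}$.

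Combining the two estimates finishes the proof. If the remainder term $\max_j|R_{n,j}(\alpha)|_p$ were the dominant one, then the $e^{-A_p(\alpha)n}$-decay would have to exceed $\mathrm H(\boldsymbol\beta)^{-[K_\infty:\R]/[K:\Q]}$ times the $e^{-(m-1)(\cdots)n}$-decay of the archimedean side, which fails once $\mathrm H(\boldsymbol\beta)>H_0=\bigl(\tfrac12 e^{\delta_p(\alpha)n}\bigr)^{[K_p:\Q_p]/[K:\Q]}$; this is exactly where $\delta_p(\alpha)>0$ is used, and where the three displayed inequalities on $n$ come in, being calibrated to absorb the subexponential factors $T_p(\alpha)$, $T^{(k)}(\alpha)$, $n^{2m(m-1)}$, $n^{m-1}$ and the Chebyshev-type denominator error term $m\,[\sqrt{{\rm{log}}n}\,{\rm{exp}}((\sqrt{322}-\sqrt{546})\sqrt{{\rm{log}}n}/\sqrt{515})]\to0$. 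Hence $|L(\boldsymbol\beta)|_p$ is the dominant term; taking $-{\rm{log}}|\cdot|_p$ in the resulting inequality and comparing with ${\rm{log}}\mathrm H(\boldsymbol\beta)$ yields exactly the bound $-\bigl(\tfrac{[K:\Q]\nu_p(\alpha)}{[K_p:\Q_p]\delta_p(\alpha)}+\tfrac{\epsilon[K:\Q]}{2[K_p:\Q_p]}\bigr){\rm{log}}(H)$. Finally, applying the same inequality to a hypothetical nontrivial $K$-relation (so $L(\boldsymbol\beta)=0$) and letting $n\to\infty$ contradicts $\delta_p(\alpha)>0$, proving the asserted linear independence. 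The conceptual skeleton here is standard; the genuine work, and the main obstacle, is the package (a)--(c) of explicit effective estimates --- above all the sharp $p$-adic bound for $R_{n,j}(\alpha)$, which must outweigh the $p$-adically large factorial and $\mathrm{lcm}[1,n]$ denominators --- together with the constant-chasing needed to check that the three conditions on $n$ are simultaneously satisfiable and to extract the precise final exponent.
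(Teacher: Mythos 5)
Your proposal follows essentially the same route as the paper: Hermite--Mahler Pad\'e approximants of $1,\log(1+z),\ldots,\log^{m-1}(1+z)$ with integrality (Lemma \ref{denominator}), the archimedean coefficient bound (Lemma \ref{keisu ookisa}), the $p$-adic remainder bound under $\tfrac{1}{\log|\alpha|_p^{-1}}+\tfrac1m\le n$ (Lemma \ref{p upper bound jyouyo}), non-vanishing of the determinant (Corollary \ref{key corollary}), and the product-formula/determinant criterion (Proposition \ref{critere p}), which is exactly how the paper assembles the theorem. One small imprecision: the factor $(2m)^{m-1}$ in $T_p(\alpha)$ does not absorb $|m!|_p^{-1}$ (the factorials are cleared by the integral normalization $d_{n+1}^m(n+1)!^m(m-1)!$, which only helps $p$-adically) but rather the bound $k^{m-1}$ on the denominators of the Taylor coefficients of $\log^j(1+z)$, evaluated at $k=m(n+1)+i-1$.
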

We will prove Theorem $\ref{p power of log indep}$ in Section $6.2$. 
\section{Pad\'{e} approximations of formal power series}
In this section, we recall the definition and basic properties of Pad\'{e} approximation of formal power series.
In the following of this section, we use $K$ as a field with characteristic $0$.
\begin{lemma} \label{Pade}
Let $m\in\N$ and $\bold{f}=(f_1(z),\ldots,f_m(z))\in K[[z]]^m$. For $\bold{n}:=(n_1,\ldots,n_m)\in \Z^m_{\ge0}$, there exists a family of polynomials $(A_1(z),\ldots,A_m(z))\in K[z]^m$ satisfying the following properties$:$
\begin{align*}
&({\rm{i}}) \ (A_1(z),\ldots,A_m(z))\neq(0,\ldots,0),\\
&({\rm{ii}}) \ {\rm{deg}}A_j(z)\le n_j \ \text{for} \ 1\le j\le m,\\
&({\rm{iii}}) \ {\rm{ord}}\sum_{j=1}^{m}A_j(z)f_j(z)\ge \sum_{j=1}^m(n_j+1)-1.
\end{align*}
\end{lemma}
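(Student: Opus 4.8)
The plan is to reduce the existence of the approximating polynomials to a counting argument in linear algebra: the conditions $(\mathrm{ii})$ and $(\mathrm{iii})$ are linear constraints on the coefficients of $A_1,\ldots,A_m$, and I will show there are strictly fewer equations than unknowns, so a nonzero solution exists. First I would write $A_j(z)=\sum_{i=0}^{n_j} a_{j,i} z^i$ with indeterminate coefficients $a_{j,i}\in K$; imposing $(\mathrm{ii})$ costs nothing, and the total number of unknowns is $\sum_{j=1}^m (n_j+1)$. Next I would expand $\sum_{j=1}^m A_j(z) f_j(z)$ as a formal power series in $K[[z]]$ and observe that requiring its order to be at least $N:=\sum_{j=1}^m (n_j+1)-1$ is equivalent to demanding that the coefficients of $z^0, z^1,\ldots,z^{N-1}$ all vanish. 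Each such coefficient is a $K$-linear form in the unknowns $a_{j,i}$ (with coefficients built from the Taylor coefficients of the $f_j$), so this is a homogeneous linear system with exactly $N = \sum_{j=1}^m(n_j+1)-1$ equations.

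Then the key step: since the number of unknowns $\sum_{j=1}^m(n_j+1)$ strictly exceeds the number of equations $\sum_{j=1}^m(n_j+1)-1$, the homogeneous system has a nontrivial solution over $K$ (the kernel of a matrix with more columns than rows is nonzero). Choosing such a nonzero solution $(a_{j,i})$ and forming the corresponding polynomials $A_1,\ldots,A_m$ gives a family that is not identically zero, hence satisfies $(\mathrm{i})$, satisfies $(\mathrm{ii})$ by construction, and satisfies $(\mathrm{iii})$ because all coefficients of $z^k$ for $k<N$ in the linear combination were forced to vanish. That completes the argument.

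There is essentially no serious obstacle here — this is the standard Hermite/Dirichlet box-principle construction of Padé approximants — but the one point requiring a little care is the bookkeeping: one must make sure that the order condition $(\mathrm{iii})$ translates into exactly $N$ (not more) independent linear constraints, so that the strict inequality ``unknowns $>$ equations'' genuinely holds. I would state this count explicitly and note that it is the precise choice $N = \sum_{j=1}^m(n_j+1)-1$, one less than the number of parameters, that makes the pigeonhole step work. No genericity or nonvanishing hypothesis on the $f_j$ is needed, since we only claim existence of a nonzero solution tuple, and a tuple in which some $A_j$ happen to vanish is still admissible as long as the whole tuple is nonzero.
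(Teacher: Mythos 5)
Your proof is correct and is exactly the standard linear-algebra (Siegel/pigeonhole) argument that the paper relies on: the vanishing of the coefficients of $z^0,\ldots,z^{\sum_j(n_j+1)-2}$ gives $\sum_j(n_j+1)-1$ homogeneous linear conditions on the $\sum_j(n_j+1)$ unknown coefficients, which is precisely the kernel computation encoded in the matrix $A_{\bold{n},N-2}(\bold{f})$ of Remark~\ref{remark bij}. One tiny remark: you do not need the constraints to be \emph{independent}, only that there are at most $\sum_j(n_j+1)-1$ of them, since a homogeneous system with more unknowns than equations always has a nontrivial solution.
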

In this article, we call the polynomials $(A_1(z),\ldots,A_m(z))\in K[z]^m$ satisfying the conditions $({\rm{i}}),({\rm{ii}}),({\rm{iii}})$ in Lemma $\ref{Pade}$ as a weight $\bold{n}$ Pad\'{e} approximants  of $\bold{f}$. For a weight $\bold{n}$ Pad\'{e} approximants of $\bold{f}$, $(A_1(z),\ldots,A_m(z))$, we call the formal power series $\sum_{j=1}^{m}A_j(z)f_j(z)$ as a weight $\bold{n}$ Pad\'{e} approximation of $\bold{f}$. 
\begin{definition} 
Let $m\in\Z_{\ge1}$ and $\bold{f}:=(f_1(z),\ldots,f_m(z))\in K[[z]]^m$.

$({\rm{i}})$ Let $\bold{n}=(n_1,\ldots,n_m)\in \Z^m_{\ge0}$. We say $\bold{n}$ is normal with respect to $\bold{f}$ if for any weight $\bold{n}$ Pad\'{e} approximation $R(z)$ of $\bold{f}$ satisfy the equality $${\rm{ord}}R(z)=\sum_{j=1}^m(n_j+1)-1.$$

$({\rm{ii}})$  We call $\bold{f}$ is perfect if any indices $\bold{n}\in \Z^m_{\ge0}$ are normal with respect to $\bold{f}$.
\end{definition}
\begin{remark} \label{remark bij} 
Let $m\in\N$, $\bold{n}=(n_1,\ldots,n_m)\in \Z^m_{\ge0}$ and $\bold{f}=(f_j(z):=\sum_{k=0}^{\infty}f_{j,k}z^k)_{1\le j \le m} \in K[[z]]^m$. 
We put $N=\sum_{j=1}^m(n_j+1)$. For $r\in \Z_{\ge0}$, we define a $N\times (r+1)$ matrix $A_{\bold{n},r}(\bold{f})$ by
\begin{equation*}
                     A_{\bold{n},r}(\bold{f}):={\begin{pmatrix}
                     f_{1,0}& 0 & \dots & 0 & \ldots & f_{m,0}& 0 & \dots & 0\\
                     f_{1,1}& f_{1,0} & \dots & 0 &  \ldots & f_{m,1}& a_{m,0} & \dots & 0\\
                     \vdots & \vdots & \ddots & \vdots  & \ddots  & \vdots & \vdots & \ddots &\vdots \\
                     f_{1,r}& f_{1,r-1} & \dots & f_{1,r-n_1} & \ldots & f_{m,r} & f_{m,r-1} & \dots & f_{m,r-n_m}\\
                     \end{pmatrix}},
                     \end{equation*}
where $a_{j,k}=0$ if $k<0$ for $1\le j \le m$. Then we have the following bijection:
\begin{align*}
&\phi^{(\bold{n})}_{\bold{f}}: {\rm{ker}}(A_{\bold{n},N-2}(\bold{f}))\setminus \{\bold{0}\}\longrightarrow \left\{(A_j(z))\in K[z]^m \middle| \sum_{j} A_j(z)f_j(z) \ \text{is a weight} \ \bold{n}\text{ Pad\'{e} approximation of} \ \bold{f}\right\}\\ 
&{}^{t}(a_{1,0},\ldots,a_{1,n_1},\ldots,a_{m,0}, \ldots, a_{m,n_m})\mapsto \left(A_j(z):=\sum_{k=0}^{n_j}a_{j,k}z^k\right)_{1\le j \le m}. \nonumber
\end{align*}
Note that the indice $\bold{n}$ is normal with respect to $\bold{f}$ is equivalent to $A_{\bold{n},N-1}(\bold{f})\in {\rm{GL}}_N(K)$.
\end{remark}
\begin{lemma} \label{cor fund Pade}
Let $m\in \N$, $\bold{f}:=(f_1,\ldots,f_m)\in K[[z]]^m$ and $\bold{n}:=(n_1,\ldots,n_m)\in \N^m$. 
Put $\bold{n}_i:=(n_1,\ldots,n_{i-1},n_i+1,n_{i+1},\ldots,n_m)\in \N^{m}$ for $1 \le i \le m$.
Suppose $\bold{n}$ is normal with respect to $\bold{f}$. Then we have $${\rm{deg}}A_i(z)=n_i+1,$$
for any weight $\bold{n}_i$ Pad\'{e} approximants $(A_1(z),\ldots,A_m(z))$ of $\bold{f}$. 
\end{lemma}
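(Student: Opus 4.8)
The plan is a short proof by contradiction resting on the hypothesis that $\bold{n}$ is normal with respect to $\bold{f}$. First I would unwind the definitions. Set $N:=\sum_{j=1}^{m}(n_j+1)$, and let $(A_1(z),\ldots,A_m(z))$ be an arbitrary weight $\bold{n}_i$ Pad\'{e} approximant of $\bold{f}$. By conditions $({\rm{i}})$--$({\rm{iii}})$ of Lemma $\ref{Pade}$ applied to the index $\bold{n}_i$, this tuple is nonzero, it satisfies ${\rm{deg}}A_j(z)\le n_j$ for $j\ne i$ and ${\rm{deg}}A_i(z)\le n_i+1$, and ${\rm{ord}}\sum_{j=1}^{m}A_j(z)f_j(z)\ge\sum_{j=1}^{m}((\bold{n}_i)_j+1)-1=\left(\sum_{j=1}^{m}(n_j+1)+1\right)-1=N$, since passing from $\bold{n}$ to $\bold{n}_i$ raises exactly one coordinate by $1$.

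Next I would suppose, towards a contradiction, that ${\rm{deg}}A_i(z)\le n_i$. Then the very same tuple $(A_1(z),\ldots,A_m(z))$ is nonzero, satisfies ${\rm{deg}}A_j(z)\le n_j$ for every $j$ (now including $j=i$), and satisfies ${\rm{ord}}\sum_{j=1}^{m}A_j(z)f_j(z)\ge N>N-1=\sum_{j=1}^{m}(n_j+1)-1$. Hence $(A_1,\ldots,A_m)$ is a weight $\bold{n}$ Pad\'{e} approximant of $\bold{f}$, so $R(z):=\sum_{j=1}^{m}A_j(z)f_j(z)$ is a weight $\bold{n}$ Pad\'{e} approximation of $\bold{f}$. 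But normality of $\bold{n}$ forces ${\rm{ord}}R(z)=\sum_{j=1}^{m}(n_j+1)-1=N-1$, contradicting ${\rm{ord}}R(z)\ge N$ obtained above (the degenerate case $R\equiv 0$ is ruled out as well, its order being $\infty\ne N-1$). Therefore ${\rm{deg}}A_i(z)=n_i+1$.

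I do not expect a genuine obstacle here; the only points that demand care are the bookkeeping of the weight sum $N$ when passing from $\bold{n}$ to $\bold{n}_i$, and the observation that a Pad\'{e} approximant is nonzero by definition, so the zero tuple cannot slip in once we drop the extra degree of freedom. As an alternative route, one may argue through Remark $\ref{remark bij}$: normality of $\bold{n}$ is equivalent to $A_{\bold{n},N-1}(\bold{f})\in{\rm{GL}}_N(K)$, and if the leading coefficient $a_{i,n_i+1}$ of $A_i(z)$ vanished, then $\phi^{(\bold{n})}_{\bold{f}}$ would identify $(A_1,\ldots,A_m)$ with a nonzero element of ${\rm{ker}}\,A_{\bold{n},N-1}(\bold{f})$, which is impossible.
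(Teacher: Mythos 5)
Your proof is correct and is essentially the paper's argument: the paper phrases the contradiction through the equivalent matrix formulation of Remark \ref{remark bij}, placing the coefficient vector of the hypothetical approximant with $\deg A_i\le n_i$ into ${\rm{ker}}(A_{\bold{n},N-1}(\bold{f}))$, which normality forces to be trivial --- exactly the ``alternative route'' you sketch at the end. Your primary phrasing via the order condition ${\rm{ord}}\,R\ge N$ versus the forced equality ${\rm{ord}}\,R=N-1$ is the same contradiction read directly off the definition of normality, and the bookkeeping of $N$ is handled correctly.
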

\begin{proof}
Put $N:=\sum_{j=1}^m (n_j+1)$. Since $\bold{n}$ is normal with respect to $\bold{f}$, we have  
${\rm{dim}}_K{\rm{ker}}(A_{\bold{n},N-1}(\bold{f}))=0$.
Suppose there exist $1\le i \le m$ and a weight $\bold{n}_i$ Pad\'{e} approximants $(A_1(z),\ldots,A_m(z))$ of $\bold{f}$ satisfying ${\rm{deg}}A_i<n_i+1$.
Put $${}^{t}(a_{1,0},\ldots,a_{1,n_1},\ldots, a_{i,0},\ldots,a_{i,n_i},0,\ldots,a_{m,0},\ldots,a_{m,n_m}):=(\phi^{(\bold{n}_i)}_{\bold{f}})^{-1}(A_1(z),\ldots,A_m(z)).$$
Then we have $${}^{t}(a_{1,0},\ldots,a_{1,n_1},\ldots, a_{i,0},\ldots,a_{i,n_i},\ldots,a_{m,0},\ldots,a_{m,n_m})\in {\rm{ker}}(A_{\bold{n},N-1}(\bold{f}))\setminus\{\bold{0}\}.$$ This is a contradiction. This completes the proof of Lemma $\ref{cor fund Pade}$.
\end{proof}
\section{Pad\'{e} approximation of exponential functions}
In this section, we recall some properties of Pad\'{e} approximation of exponential functions. 
We quote some propositions for the Pad\'{e} approximation of exponential functions in \cite{J}.
\begin{proposition} \label{perfect e} $($cf.  \rm{\cite[Theorem $1.2.1$]{J}}$)$
Let $n$ be a natural number and $\omega_1,\ldots,\omega_n$ pairwise distinct complex numbers. Then the functions $e^{\omega_1z},\ldots,e^{\omega_n z}$ are perfect. Especially, for $l\in \N$, the functions $1,e^z,\ldots,e^{lz}$ are perfect.
\end{proposition}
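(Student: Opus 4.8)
The plan is to establish perfection of $e^{\omega_1 z}, \ldots, e^{\omega_n z}$ for pairwise distinct $\omega_j$ by an explicit construction of the Hermite--Padé approximants together with a nonvanishing (normality) argument, then specialize to $\omega_j = (j-1)$ for $j = 1, \ldots, n$ with $n = l+1$. First I would recall the Hermite integral representation: for a weight $\mathbf{n} = (n_1, \ldots, n_n) \in \Z_{\ge 0}^n$, setting $N = \sum_j (n_j + 1)$, one seeks polynomials $A_j(z)$ with $\deg A_j \le n_j$ such that $R(z) := \sum_j A_j(z) e^{\omega_j z}$ has order $\ge N - 1$ at $z = 0$. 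The classical device is to write $R$ via a contour integral
\begin{align*}
R(z) = \frac{z^{N-1}}{2\pi i} \oint \frac{e^{zt}}{\prod_{j=1}^n (t - \omega_j)^{n_j + 1}}\, dt,
\end{align*}
where the contour encloses all the $\omega_j$; expanding by residues at $t = \omega_j$ produces $A_j(z) e^{\omega_j z}$ with $A_j$ a polynomial of degree exactly $n_j$ (its leading coefficient being, up to sign, $\prod_{k \ne j}(\omega_j - \omega_k)^{-(n_k+1)} \ne 0$), and the prefactor $z^{N-1}$ forces $\mathrm{ord}\, R \ge N - 1$.

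The crucial point — and the main obstacle — is to prove \emph{normality}, i.e. that $\mathrm{ord}\, R(z) = N - 1$ exactly (equivalently, by Remark \ref{remark bij}, that the square matrix $A_{\mathbf{n}, N-1}(\mathbf{f}) \in \mathrm{GL}_N(K)$), and moreover that this holds for \emph{every} weight-$\mathbf{n}$ Padé approximation, not just the one produced by the integral. From the integral formula, $\mathrm{ord}\, R = N - 1$ precisely when the residue integral $\frac{1}{2\pi i}\oint \prod_j (t - \omega_j)^{-(n_j+1)}\, dt$ is nonzero; but this integral is the coefficient extraction that equals $0$ only for trivial reasons one must rule out — here instead I would argue via uniqueness. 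Since the space of weight-$\mathbf{n}$ Padé approximants has dimension $\dim_K \ker A_{\mathbf{n}, N-2}(\mathbf{f}) \ge 1$ and the constructed solution has $\deg A_j = n_j$ for all $j$ with $\mathrm{ord}\, R = N-1$, a dimension count shows normality is equivalent to this solution being essentially unique; to get uniqueness I would suppose two linearly independent approximants existed, take a suitable linear combination to kill a leading term, and derive an approximant of strictly smaller weight with too-high vanishing order, contradicting the order bound $\mathrm{ord} \le N - 2$ that any nonzero element of $\ker A_{\mathbf{n}, N-2}$ must satisfy when one index drops.

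Finally, since this holds for every $\mathbf{n} \in \Z_{\ge 0}^n$, the tuple $(e^{\omega_1 z}, \ldots, e^{\omega_n z})$ is perfect by definition; applying this with $n = l+1$ and $\omega_j = j - 1$ (pairwise distinct) gives perfection of $1, e^z, \ldots, e^{lz}$. Rather than reproving all of this, I would simply cite \cite[Theorem 1.2.1]{J} for the statement and sketch only the integral representation and the normality reduction above, since the result is standard in the Hermite--Mahler theory and the paper uses it as an input.
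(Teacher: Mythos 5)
The paper offers no proof of this proposition at all --- it is quoted from Jager with the citation [J, Theorem 1.2.1] --- so your closing decision to simply cite the reference is exactly what the paper does, and to that extent the proposal is acceptable. Since you do attach a sketch, though, it has two genuine problems that would prevent it from standing on its own. First, the integral representation is miswritten: the Hermite formula is $R(z)=\tfrac{1}{2\pi i}\oint e^{zt}\prod_{j}(t-\omega_j)^{-(n_j+1)}\,dt$ with \emph{no} prefactor $z^{N-1}$. The vanishing $\mathrm{ord}\,R\ge N-1$ comes from the moment identities $\oint t^{k}\prod_{j}(t-\omega_j)^{-(n_j+1)}\,dt=0$ for $0\le k\le N-2$ (the integrand is $O(t^{k-N})$ at infinity), and the coefficient of $z^{N-1}$ is $1/(N-1)!\neq0$, so the constructed $R$ automatically has order \emph{exactly} $N-1$. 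With your prefactor the residue expansion would produce polynomials of degree $n_j+N-1$ rather than $n_j$, and the quantity you would need to be nonzero, $\tfrac{1}{2\pi i}\oint\prod_{j}(t-\omega_j)^{-(n_j+1)}\,dt$, is in fact $0$ whenever $N\ge2$ (it is the sum of all residues of a rational function decaying like $t^{-N}$).

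Second, the normality reduction is not sound. One-dimensionality of $\ker A_{\mathbf{n},N-2}(\mathbf{f})$ is necessary but not sufficient for normality: one must also show that the generator of that kernel does not lie in $\ker A_{\mathbf{n},N-1}(\mathbf{f})$, i.e.\ that its order is exactly $N-1$, so ``normality is equivalent to uniqueness'' is false as stated. Moreover your route to uniqueness --- cancel a leading term and invoke the order bound $\le N-2$ for the resulting smaller-weight approximant --- presupposes normality at the smaller weight, which is the statement being proved; as written it is circular. The standard repair (essentially Hermite's argument, and the one underlying Jager's theorem) is a zero-counting lemma: a nonzero function $\sum_{j}A_j(z)e^{\omega_j z}$ with $\deg A_j\le n_j$ has at most $N-1$ zeros counted with multiplicity, proved by induction on $n$ by dividing by $e^{\omega_n z}$, differentiating $n_n+1$ times to remove one exponential, and applying Rolle. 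That lemma bounds $\mathrm{ord}\,R\le N-1$ for \emph{every} nonzero candidate simultaneously, which is precisely perfection, and it makes the explicit construction unnecessary for this purpose.
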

Let $\omega_1,\ldots,\omega_n$ be pairwise distinct complex numbers. 
Explicit construction of Pad\'{e} approximations of $e^{\omega_1 z},\ldots,e^{\omega_n z}$ are given by Hermite as follows.
\begin{proposition} \label{Pade e} $(cf. $\rm{\cite[p. $242$]{J}}$)$
Let $\bold{m}:=(m_1,\ldots,m_n)\in \Z^{n}_{\ge0}$ and $\{a_{h,j}(\bold{m}, \boldsymbol{\omega})\}_{1\le h \le n, 1\le j \le m_h+1}$ be the family of complex numbers satisfying the following equality$:$
\begin{align} \label{coefficients}
\dfrac{1}{\prod_{h=1}^n(x-\omega_h)^{m_h+1}}=\sum_{h=1}^n \sum_{j=1}^{m_h+1}\dfrac{a_{h,j}(\bold{m},\boldsymbol{\omega})}{(x-\omega_h)^j}.
\end{align}
Then the formal power series $$S(z):=\sum_{h=1}^n\left(\sum_{j=0}^{m_h}a_{h,j+1}(\bold{m},\boldsymbol{\omega})\dfrac{z^j}{j!}\right)e^{\omega_hz},$$
is a weight $\bold{m}$ Pad\'{e} approximation of $e^{\omega_1z},\ldots,e^{\omega_nz}$.
\end{proposition}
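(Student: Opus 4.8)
The statement to prove is Proposition \ref{Pade e}, which asserts that the explicitly constructed series $S(z)$ built from the partial fraction coefficients of $\prod_{h=1}^n (x-\omega_h)^{-(m_h+1)}$ is a weight $\bold{m}$ Padé approximation of the system $e^{\omega_1 z},\ldots,e^{\omega_n z}$.

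The plan is to verify directly the three defining conditions of a weight $\bold{m}$ Padé approximation: the polynomial coefficients $A_h(z) := \sum_{j=0}^{m_h} a_{h,j+1}(\bold{m},\boldsymbol{\omega}) z^j/j!$ are not all zero, each has degree $\le m_h$, and $\mathrm{ord}\, S(z) \ge \sum_{h=1}^n (m_h+1) - 1$. The degree bound is immediate from the definition of $A_h(z)$. Non-vanishing follows because the leading partial-fraction coefficient $a_{h,1}(\bold{m},\boldsymbol{\omega})$ (the coefficient of $1/(x-\omega_h)$, up to indexing) is nonzero — indeed from $(\ref{coefficients})$ one sees $a_{h,m_h+1}(\bold{m},\boldsymbol{\omega}) = \prod_{l\ne h}(\omega_h-\omega_l)^{-(m_l+1)} \ne 0$, so $A_h(z)$ has a nonzero coefficient in degree $m_h$.

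The heart of the argument is the order estimate, and this is where I expect the main work to lie. The standard approach is the Hermite integral representation. Observe that for each $h$,
\begin{align*}
\sum_{j=0}^{m_h} a_{h,j+1}(\bold{m},\boldsymbol{\omega}) \frac{z^j}{j!} e^{\omega_h z}
\end{align*}
arises from the residue at $x = \omega_h$ of $x \mapsto e^{xz}/\prod_{l=1}^n (x-\omega_l)^{m_l+1}$. Summing over $h$, the residue theorem gives
\begin{align*}
S(z) = \frac{1}{2\pi i}\oint_{\Gamma} \frac{e^{xz}}{\prod_{l=1}^n (x-\omega_l)^{m_l+1}}\, dx,
\end{align*}
where $\Gamma$ is a large contour enclosing all the $\omega_l$. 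Expanding $e^{xz} = \sum_{k\ge 0} x^k z^k / k!$ and integrating term by term, the coefficient of $z^k$ in $S(z)$ is $\frac{1}{k!}\cdot\frac{1}{2\pi i}\oint_\Gamma \frac{x^k}{\prod_l (x-\omega_l)^{m_l+1}}\,dx$, which vanishes whenever $k < \sum_l (m_l+1) - 1$ because the integrand is then $O(|x|^{-2})$ at infinity and has no residue at infinity. Hence $\mathrm{ord}\,S(z) \ge \sum_{l=1}^n (m_l+1) - 1$, as required. An alternative, purely algebraic route avoiding contour integrals is to multiply $(\ref{coefficients})$ through by $e^{xz}$, interpret both sides as Laurent-type expansions, and track orders; but the contour-integral bookkeeping is cleanest. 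I would also remark that since $e^{\omega_1 z},\ldots,e^{\omega_n z}$ is perfect by Proposition \ref{perfect e}, the order is in fact exactly $\sum_l(m_l+1)-1$ and $S(z)$ is, up to scalar, the unique such approximation — though only the inequality is needed for the statement. This completes the verification of conditions (i), (ii), (iii), and hence the proof of Proposition \ref{Pade e}.
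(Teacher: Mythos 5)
Your proof is correct and is exactly the classical Hermite residue argument that the paper itself relies on: the paper states this proposition without proof, citing Jager \cite[p.~242]{J}, and the cited proof is precisely your contour-integral representation $S(z)=\frac{1}{2\pi i}\oint_\Gamma e^{xz}\prod_l(x-\omega_l)^{-(m_l+1)}dx$ together with the vanishing of $\oint_\Gamma x^k\prod_l(x-\omega_l)^{-(m_l+1)}dx$ for $k<\sum_l(m_l+1)-1$ (indeed, the paper's later Remark uses this same integral representation and the resulting leading term $z^{N-1}/(N-1)!$). The only blemish is the momentary mislabelling of the nonvanishing coefficient as $a_{h,1}$ before you correctly identify it as $a_{h,m_h+1}=\prod_{l\neq h}(\omega_h-\omega_l)^{-(m_l+1)}$, which is what actually shows $\deg A_h=m_h$.
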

\section{Pad\'{e} approximations of power of logarithm functions}
In this section, we construct Pad\'{e} approximations of $1,{\rm{log}}(1+z),\ldots, {\rm{log}}^{m-1}(1+z)$ for $m\in \Z_{\ge2}$ by using that of exponential functions obtained in Proposition $\ref{Pade e}$. 
\begin{lemma} \label{normality}
Let $f(z)\in K[[z]]$. Suppose there exists $g(z)\in K[[z]]$ satisfying $f(g(z))=g(f(z))=z$.
Put $\tilde{g}(z):=g(z)+1$ and assume $1,\tilde{g}(z),\ldots,\tilde{g}^l(z)$ are perfect for any $l\in \N$. 
Then the any indices $\bold{n}\in \{(n_0,n_1,\ldots,n_{m-1})\in \Z^m_{\ge0}| \ n_0\ge n_1\ge \ldots \ge n_{m-1}\}$ are normal with respect to $(1,f(z),\ldots,f^{m-1}(z))$ for any $m\in \Z_{\ge 2}$.
\end{lemma}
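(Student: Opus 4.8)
The plan is to deduce normality for $(1, f(z), \ldots, f^{m-1}(z))$ from the perfectness of $(1, \tilde{g}(z), \ldots, \tilde{g}^{m-1}(z))$ by transporting everything through the change of variables $z \mapsto g(z)$. Since $f$ and $g$ are mutually inverse elements of $K[[z]]$, each has order exactly $1$ and $f(0) = g(0) = 0$, so composition with $g$ is an automorphism of $K[[z]]$ that preserves the order function. I would fix $m \ge 2$ and an index $\bold{n} = (n_0, \ldots, n_{m-1})$ with $n_0 \ge \cdots \ge n_{m-1} \ge 0$, set $N := \sum_{i=0}^{m-1}(n_i + 1)$, take an arbitrary weight $\bold{n}$ Pad\'e approximant $(A_0, \ldots, A_{m-1})$ of $\bold{f} := (1, f(z), \ldots, f^{m-1}(z))$ and put $R(z) := \sum_{j=0}^{m-1} A_j(z) f^j(z)$, so that ${\rm{ord}}\, R(z) \ge N - 1$; the goal is to force equality, which by Remark $\ref{remark bij}$ is precisely the normality of $\bold{n}$.

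The first step is the substitution: using $f(g(z)) = z$,
\begin{align*}
R(g(z)) = \sum_{j=0}^{m-1} A_j(g(z))\, z^j, \qquad {\rm{ord}}_z R(g(z)) = {\rm{ord}}_z R(z).
\end{align*}
Next I would expand each $A_j(g(z)) = A_j(\tilde{g}(z) - 1)$ in powers of $\tilde{g}(z)$ by the binomial theorem: if $A_j(z) = \sum_{k=0}^{n_j} a_{j,k} z^k$ then $A_j(g(z)) = \sum_{l=0}^{n_j} b_{j,l}\, \tilde{g}^l(z)$ with $b_{j,l} = \sum_{k=l}^{n_j} (-1)^{k-l}\binom{k}{l} a_{j,k}$, the passage $(a_{j,k})_k \mapsto (b_{j,l})_l$ being unipotent upper triangular and hence invertible, so that $(b_{j,l})_{j,l} \neq \bold{0}$. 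Regrouping,
\begin{align*}
R(g(z)) = \sum_{l=0}^{n_0} C_l(z)\, \tilde{g}^l(z), \qquad C_l(z) := \sum_{j\,:\,n_j \ge l} b_{j,l}\, z^j.
\end{align*}
Here the monotonicity of $\bold{n}$ enters: $\{j : n_j \ge l\} = \{0, 1, \ldots, J(l)\}$ with $J(l) := \max\{j : n_j \ge l\}$, so ${\rm{deg}}\, C_l \le J(l)$, and a double count gives $\sum_{l=0}^{n_0}(J(l)+1) = \sum_{j=0}^{m-1}(n_j+1) = N$. Thus $(C_0, \ldots, C_{n_0})$ is a nonzero tuple with ${\rm{deg}}\, C_l \le J(l)$ and ${\rm{ord}} \sum_l C_l \tilde{g}^l = {\rm{ord}}_z R(z) \ge N - 1 = \sum_{l=0}^{n_0}(J(l)+1) - 1$, i.e. a weight $(J(0), \ldots, J(n_0))$ Pad\'e approximant of $(1, \tilde{g}(z), \ldots, \tilde{g}^{n_0}(z))$.

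To finish, I would invoke the hypothesis that $1, \tilde{g}(z), \ldots, \tilde{g}^{n_0}(z)$ is perfect: the index $(J(0), \ldots, J(n_0))$ is then normal, so ${\rm{ord}} \sum_l C_l \tilde{g}^l = N - 1$, whence ${\rm{ord}}\, R(z) = N - 1$; since $(A_0, \ldots, A_{m-1})$ was arbitrary, $\bold{n}$ is normal with respect to $(1, f(z), \ldots, f^{m-1}(z))$. The delicate point is the regrouping step: the decreasing hypothesis on $\bold{n}$ is exactly what makes each set $\{j : n_j \ge l\}$ an initial segment, and this simultaneously yields the degree bounds ${\rm{deg}}\, C_l \le J(l)$ and the total-weight identity $\sum_l(J(l)+1) = N$, so that the transformed data lands precisely on a Pad\'e approximant of the exponential system rather than on one of strictly larger total weight, for which the inherited bound ${\rm{ord}} \ge N-1$ would be too weak to apply normality. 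Checking that $z \mapsto g(z)$ preserves order and that the binomial transform is invertible is routine.
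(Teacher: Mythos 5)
Your argument is correct and is essentially the paper's own proof: the substitution $z\mapsto g(z)$ is the paper's order-preserving isomorphism $\Psi$, the re-expansion of the $A_j$ in powers of $\tilde{g}=1+g$ is the paper's expansion $A_j(z)=\sum_h a_{h,j}(1+z)^h$, and your conjugate index $(J(0),\ldots,J(n_0))$ is exactly the paper's $\bold{m}=((m-1)_{n_s+1},\ldots,(r_0-1)_{n_0-n_1})$. The only (welcome) difference is that you make explicit the invertibility of the binomial transform and the double count $\sum_l(J(l)+1)=N$, which the paper leaves implicit in its bijection $\Psi:V_{\bold{n}}\to W_{\bold{m}}$.
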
 
\begin{proof}
Denote the set $\{(n_0,n_1,\ldots,n_{m-1})\in \Z^m_{\ge0}| \ n_0\ge n_1\ge \ldots \ge n_{m-1}\}$ by $\mathcal{X}_m$. 
Let $$\bold{n}:=((n_0)_{r_0},(n_1)_{r_1}\ldots, (n_s)_{r_s}) \in \mathcal{X}_m \ \text{for} \ n_0>n_1>\ldots >n_s,$$
where $(n_i)_{r_i}=(n_i,\ldots,n_i)\in \Z^{r_i}_{\ge 0}$ for $0\le i \le s$. We put  
\begin{align*}
&\bold{m}:=((m-1)_{n_s+1},(\sum_{i=0}^{s-1}r_i-1)_{n_{s-1}-n_s},(\sum_{i=0}^{s-2}r_i-1)_{n_{s-2}-n_{s-1}},\ldots,(r_0-1)_{n_0-n_1})\in \mathcal{X}_{n_0+1},\\
&\bold{f}:=(1,f(z),\ldots,f^{m-1}(z)),\\
&\tilde{\bold{g}}:=(1,\tilde{g}(z),\ldots,\tilde{g}^{n_0}(z)),\\
&V_{\bold{n}}:=\left\{R(z)=\sum_{j=0}^{m-1}A_j(z)f^j(z) \middle| R(z) \ \text{is a weight} \ \bold{n} \ \text{Pad\'{e} approximation of} \ \bold{f}\right\},\\
&W_{\bold{m}}:=\left\{\mathcal{R}(z)=\sum_{j=0}^{n_0}\mathcal{A}_j(z)\tilde{g}^j(z) \middle| \mathcal{R}(z) \ \text{is a weight} \ \bold{m} \ \text{Pad\'{e} approximation of} \ \tilde{\bold{g}}\right\}.
\end{align*}
We define the $K$-isomorphism $\Psi$ by $$\Psi:K[[z]]\longrightarrow K[[z]], \ \sum_{k=0}^{\infty}a_k z^k\mapsto \sum_{k=0}^{\infty}a_k \tilde{g}^k(z).$$
Note that $\Psi$ is an order preserving map, namely we have ${\rm{ord}}F(z)={\rm{ord}}\Psi(F(z))$ for $F(z)\in K[[z]]$.
We prove that $\Psi$ induces the bijection 
$\Psi:V_{\bold{n}}\longrightarrow W_{\bold{m}}$. Let $R(z)=\sum_{j=0}^{m-1}A_j(z)f^j(z)\in V_{\bold{n}}$ and put 
$A_j(z)=\sum_{h=0}^{n_{0}}a_{h,j}(1+z)^h$ with for $0\le j \le m-1$.
Then we obtain 
\begin{align} \label{R 1}
&R(z)=\\
&\sum_{j=0}^{r_0-1}(\sum_{h=0}^{n_0}a_{h,j}(1+z)^h)f^{j}(z)+\sum_{j=r_0}^{r_0+r_1-1}(\sum_{h=0}^{n_1}a_{h,j}(1+z)^h)f^{j}(z)+\ldots+\sum_{j=r_0+\ldots+r_{s-1}}^{m-1}(\sum_{h=0}^{n_s}a_{h,j}(1+z)^h)f^{j}(z).\nonumber
\end{align} 
Using $(\ref{R 1})$, we have
\begin{align}\label{Psi R}
&\Psi(R)(z)=\nonumber\\
&\sum_{j=0}^{r_0-1}(\sum_{h=0}^{n_0}a_{h,j}\tilde{g}^h(z))z^j+\sum_{j=r_0}^{r_0+r_1-1}(\sum_{h=0}^{n_0}a_{h,j}\tilde{g}^h(z))z^j+\ldots+\sum_{j=r_0+\ldots+r_{s-1}}^{m-1}(\sum_{h=0}^{n_s}a_{h,j}\tilde{g}^h(z))z^j=\nonumber\\
&\sum_{h=0}^{n_s}\left(\sum_{j=0}^{m-1}a_{h,j}z^j\right)\tilde{g}^h(z)+\sum_{h=n_s+1}^{n_{s-1}}\left(\sum_{j=0}^{r_0+\ldots+r_{s-1}-1}a_{h,j}z^j\right)\tilde{g}^h(z)+\ldots+\sum_{h=n_1+1}^{n_0}\left(\sum_{j=0}^{r_0-1}a_{h,j}z^j\right)\tilde{g}^h(z).
\end{align}
Since $\Psi$ is order preserving map, the equality $(\ref{Psi R})$ shows that $\Psi(R)$ is a weight $\bold{m}$ Pad\'{e} approximation of $\tilde{\bold{g}}$. 
Then we have $\Psi(V_{\bold{n}})\subseteq W_{\bold{m}}$. 
By the similar way, we also obtain $W_{\bold{m}}\subseteq \Psi(V_{\bold{n}})$. Then the map $\Psi:V_{\bold{n}}\longrightarrow W_{\bold{m}}$ is bijection.
Since $\bold{m}$ is normal with respect to $\tilde{\bold{g}}$, we have ${\rm{ord}}S(z)=\sum_{i=0}^s (n_i+1)r_i-1$ for all $S(z)\in W_{\bold{m}}$. 
Since the bijection $\Psi:V_{\bold{n}}\longrightarrow W_{\bold{m}}$ is order preserving map, we also obtain ${\rm{ord}}R(z)=\sum_{i=0}^s (n_i+1)r_i-1$ for all $R(z)\in V_{\bold{n}}$. 
This shows that the indice $\bold{n}$ is normal with respect to $\bold{f}$. This completes the proof of Lemma $\ref{normality}$.
\end{proof} 
\begin{proposition} $($cf. {\rm{\cite[Theorem $1.2.3$]{J}}}$)$ \label{diagonal normality log}
Let $m\in \Z_{\ge 2}$. Denote the set $\{(n_0,\ldots,n_{m-1})\in \Z^m_{\ge0}| \ n_0\ge n_1\ge \ldots \ge n_{m-1}\}$ by $\mathcal{X}_m$. 
Then any indices $\bold{n}\in \mathcal{X}_m$ are normal with respect to $(1,{\rm{log}}(1+z),\ldots,{\rm{log}}^{m-1}(1+z))$. 
\end{proposition}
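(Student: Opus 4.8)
The plan is to deduce this from Lemma~\ref{normality} together with Proposition~\ref{perfect e}, by recognizing ${\rm{log}}(1+z)$ as a power series whose compositional inverse is $e^z-1$. First I would set $f(z):={\rm{log}}(1+z)=\sum_{k\ge 1}\tfrac{(-1)^{k+1}}{k}z^k$, which lies in $\Q[[z]]\subseteq K[[z]]$, and take $g(z):=e^z-1=\sum_{k\ge 1}\tfrac{z^k}{k!}\in\Q[[z]]\subseteq K[[z]]$. The relations $f(g(z))=g(f(z))=z$ hold as formal power series identities (they hold over $\C$ and all coefficients involved are rational, so they are formal identities), which verifies the first hypothesis of Lemma~\ref{normality}. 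Consequently $\tilde{g}(z):=g(z)+1=e^z$ and $\tilde{g}^{l}(z)=e^{lz}$ for every $l\in\N$.

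Next I would check the remaining hypothesis of Lemma~\ref{normality}, namely that $1,\tilde{g}(z),\ldots,\tilde{g}^{l}(z)=1,e^{z},\ldots,e^{lz}$ are perfect for every $l\in\N$. This is precisely the final assertion of Proposition~\ref{perfect e} (the case $\omega_h=h$ for $1\le h\le l$). Although Proposition~\ref{perfect e} is stated over $\C$, perfectness of $1,e^z,\ldots,e^{lz}$ reduces to the non-vanishing of the determinants of the matrices $A_{\bold{n},N-1}(\tilde{\bold{g}})$ from Remark~\ref{remark bij}, whose entries are the coefficients $j^{k}/k!\in\Q$ of $e^{jz}$; hence the same non-vanishing holds over $K$, so $1,\tilde{g}(z),\ldots,\tilde{g}^{l}(z)$ are perfect over $K$ as well.

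With both hypotheses of Lemma~\ref{normality} in hand for $f(z)={\rm{log}}(1+z)$, the lemma applies directly and yields that every $\bold{n}\in\mathcal{X}_m$ is normal with respect to $(1,{\rm{log}}(1+z),\ldots,{\rm{log}}^{m-1}(1+z))$, for every $m\in\Z_{\ge2}$, which is exactly the claim. I do not expect a genuine obstacle here: the real work has already been carried out in Lemma~\ref{normality} (the order-preserving bijection $\Psi$ transporting weight $\bold{n}$ Pad\'e approximations of $\bold{f}$ to weight $\bold{m}$ Pad\'e approximations of $\tilde{\bold{g}}$) and in Proposition~\ref{perfect e} (perfectness of systems of exponentials); the only remaining point is the elementary identification of $e^z-1$ as the formal compositional inverse of ${\rm{log}}(1+z)$ and the harmless descent of perfectness from $\C$ to $K$.
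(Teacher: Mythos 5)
Your proposal is correct and follows exactly the paper's own route: apply Lemma~\ref{normality} with $f(z)=\log(1+z)$, $g(z)=e^z-1$, so that $\tilde{g}^l(z)=e^{lz}$, and invoke Proposition~\ref{perfect e} for the perfectness of $1,e^z,\ldots,e^{lz}$. Your extra remark on descending perfectness from $\C$ to $K$ via the rationality of the coefficient matrices is a careful touch the paper leaves implicit, but it does not change the argument.
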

\begin{proof}
By Proposition $\ref{perfect e}$, we have  $1,e^z,\ldots,e^{lz}$ are perfect for $l\in \N$.
Using Lemma $\ref{normality}$ for $f(z):={\rm{log}}(1+z)$ and $g(z):=e^z-1$, any indices $\bold{n}\in \mathcal{X}_m$ are normal with respect to $(1,{\rm{log}}(1+z),\ldots,{\rm{log}}^{m-1}(1+z))$.  This completes the proof of Proposition $\ref{diagonal normality log}$.
\end{proof}
Let $m\in \Z_{\ge2}$ and $\bold{n}\in \mathcal{X}_m$. We obtain a weight $\bold{n}$ Pad\'{e} approximation of $1,{\rm{log}}(1+z),\ldots,{\rm{log}}^{m-1}(1+z)$ as follows:
\begin{proposition} \label{pade log}
Let $\{r_i\}_{0\le i \le s}\subset \N$ and $\{n_i\}_{0\le i \le s}\subset \Z_{\ge0}$ satisfying $r_0+\ldots+r_s=m$ and $n_0>\ldots>n_s$. 
Put 
\begin{align*}
&\bold{n}:=((n_0)_{r_0},\ldots,(n_s)_{r_s})\in \mathcal{X}_m\\
&\bold{m}:=((m-1)_{n_s+1},(\sum_{i=0}^{s-1}r_i-1)_{n_{s-1}-n_s},(\sum_{i=0}^{s-2}r_i-1)_{n_{s-2}-n_{s-1}},\ldots,(r_0-1)_{n_0-n_1}),\\
&\boldsymbol{\omega}:=(0,1,\ldots,n_0).
\end{align*}
We define the family of rational numbers $\{a_{h,j}(\bold{m}, \boldsymbol{\omega})\}_{0\le h \le n_0+1, 1\le j \le m-1}$ as follows$:$
$$\dfrac{1}{\prod_{h=0}^{n_s}(x-h)^{m}}\times \dfrac{1}{\prod_{h=n_s+1}^{n_{s-1}}(x-h)^{\sum_{i=1}^{s-1}r_i}} \times \ldots \times \dfrac{1}{\prod_{h=n_1+1}^{n_{0}}(x-h)^{r_0}}
=\sum_{h=0}^{n_0} \sum_{j=1}^{m-1}\dfrac{a_{h,j}(\bold{m},\boldsymbol{\omega})}{(x-h)^j}.$$
Then the formal power series 
\begin{align}\label{Pade log power}
R(z):=\sum_{j=0}^{m-1} \left( \dfrac{\sum_{h=0}^{n_0}a_{h,j+1}(\bold{m},\boldsymbol{\omega})(1+z)^h}{j!}\right) {\rm{log}}^{j}(1+z),
\end{align}
is a weight $\bold{n}$ Pad\'{e} approximation of $(1,{\rm{log}}(1+z),\ldots,{\rm{log}}^{m-1}(1+z))$.
\end{proposition}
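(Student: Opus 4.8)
The plan is to obtain $R(z)$ by transporting Hermite's explicit Pad\'e approximation of exponential functions (Proposition \ref{Pade e}) through the order-preserving $K$-isomorphism $\Psi$ used in the proof of Lemma \ref{normality}, taken for $f(z)={\rm{log}}(1+z)$ and $g(z)=e^z-1$; recall that this $\Psi$ satisfies $\Psi({\rm{log}}(1+z))=z$ and $\Psi((1+z)^h)=e^{hz}$ for every $h$. First I would apply Proposition \ref{Pade e} with $\boldsymbol{\omega}=(0,1,\ldots,n_0)$ and the weight $\bold{m}$ of the statement: writing $m_h$ for the $h$-th component of $\bold{m}$, the formal power series $S(z)=\sum_{h=0}^{n_0}\bigl(\sum_{j=0}^{m_h}a_{h,j+1}(\bold{m},\boldsymbol{\omega})\,z^j/j!\bigr)e^{hz}$ is a weight $\bold{m}$ Pad\'e approximation of $(1,e^z,\ldots,e^{n_0z})$; in particular the polynomial coefficient of each $e^{hz}$ has degree at most $m_h$, and ${\rm{ord}}\,S(z)\ge\sum_{h=0}^{n_0}(m_h+1)-1$.

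Next I would apply $\Psi^{-1}$. Since $\Psi$ is a ring homomorphism, $\Psi^{-1}(z^j)={\rm{log}}^j(1+z)$ and $\Psi^{-1}(e^{hz})=(1+z)^h$, so $\Psi^{-1}(S(z))$ equals the series $R(z)$ of $(\ref{Pade log power})$, where one extends the family $a_{h,j+1}(\bold{m},\boldsymbol{\omega})$ by zero for $j>m_h$ — harmless because the block structure of $\bold{m}$ gives $m_h\le m-1$ for every $h$. Because $\Psi$ preserves order, ${\rm{ord}}\,R(z)={\rm{ord}}\,S(z)\ge\sum_{h=0}^{n_0}(m_h+1)-1$, and a short computation with that block structure yields $\sum_{h=0}^{n_0}(m_h+1)=\sum_{i=0}^{s}r_i(n_i+1)=\sum_{j=0}^{m-1}(n_j+1)$, where $n_j$ denotes the $j$-th component of $\bold{n}=((n_0)_{r_0},\ldots,(n_s)_{r_s})$. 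Hence $R(z)$ meets condition (iii) of Lemma \ref{Pade} relative to $(1,{\rm{log}}(1+z),\ldots,{\rm{log}}^{m-1}(1+z))$.

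It remains to verify conditions (i) and (ii) of Lemma \ref{Pade} for $R(z)=\sum_{j=0}^{m-1}A_j(z){\rm{log}}^j(1+z)$ with $A_j(z)=\tfrac{1}{j!}\sum_{h=0}^{n_0}a_{h,j+1}(\bold{m},\boldsymbol{\omega})(1+z)^h$. For (ii) I would use the block structure of $\bold{m}$: for $h$ in the block $(n_{i+1},n_i]$ (with the convention $n_{s+1}:=-1$, so the bottom block is $[0,n_s]$) one reads $m_h+1=r_0+\cdots+r_i$, hence $a_{h,j}=0$ for $j>r_0+\cdots+r_i$; consequently, whenever $r_0+\cdots+r_{i-1}\le j$, every $a_{h,j+1}$ with $h>n_i$ vanishes, so $A_j(z)$ is a polynomial in $1+z$ of degree $\le n_i$. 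Since $n_j=n_i$ precisely when $r_0+\cdots+r_{i-1}\le j<r_0+\cdots+r_i$, this is exactly ${\rm{deg}}\,A_j(z)\le n_j$. Condition (i) holds because $S(z)$ is a genuine Pad\'e approximation, hence nonzero (its order is finite), so its coefficient polynomials — equivalently the scalars $a_{h,j+1}(\bold{m},\boldsymbol{\omega})$, equivalently the $A_j(z)$ (the $(1+z)^h$ being linearly independent) — do not all vanish. The genuinely delicate point is this degree bookkeeping: matching, block by block, the vanishing pattern of the partial-fraction coefficients against the non-increasing weight $\bold{n}$. Everything else is a formal transport along $\Psi$ of the exponential Pad\'e approximants of Proposition \ref{Pade e}.
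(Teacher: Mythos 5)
Your proposal is correct and follows essentially the same route as the paper: apply Proposition \ref{Pade e} with $\boldsymbol{\omega}=(0,1,\ldots,n_0)$ and weight $\bold{m}$, then transport the resulting exponential Pad\'e approximation back through the order-preserving substitution $\Psi$ (the paper simply cites the proof of Lemma \ref{normality} for this step). The only difference is that you spell out the block-by-block degree bookkeeping matching the vanishing pattern of the $a_{h,j}$ against $\bold{n}$, which the paper leaves implicit; that verification is accurate.
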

\begin{proof}
We define a $\overline{\Q}$-isomorphism $\Psi$ by $$\Psi:\overline{\Q}[[z]]\longrightarrow \overline{\Q}[[z]], \ z\mapsto e^{z}-1.$$
By Proposition $\ref{Pade e}$, the formal power series 
\begin{align} \label{explicit Pade e}
\mathcal{R}(z):=\sum_{h=0}^{n_0}\left(\sum_{j=0}^{m-1}a_{h,j+1}(\bold{m},\boldsymbol{\omega})\dfrac{z^j}{j!}\right)e^{hz},
\end{align} is a weight $\bold{m}$ Pad\'{e} approximation of $1,e^z,\ldots,e^{n_0z}$. By the proof of Lemma $\ref{normality}$, we have 
\begin{align} \label{R}
\Psi^{-1}(\mathcal{R}(z))=\sum_{j=0}^{m-1} \left( \dfrac{\sum_{h=0}^{n_0}a_{h,j+1}(\bold{m},\boldsymbol{\omega})(1+z)^h}{j!}\right) {\rm{log}}^{j}(1+z),
\end{align}
is a weight $\bold{n}$ Pad\'{e} approximation of $(1,{\rm{log}}(1+z),\ldots,{\rm{log}}^{m-1}(1+z))$. Since the right hand side of the equality $(\ref{R})$ is the formal power series $R(z)$ defined in $(\ref{Pade log power})$, this completes the proof of Proposition $\ref{pade log}$. 
\end{proof}
\begin{remark}
Let $(n_0,\ldots,n_m)\in \mathcal{X}_m$ and $R(z)$ and $\mathcal{R}(z)$ be the formal power series defined in $(\ref{Pade log power})$ and $(\ref{explicit Pade e})$ respectively.   
Put $N:=\sum_{j=0}^{m-1}(n_j+1)$. We have $\mathcal{R}(z)=\dfrac{z^{N-1}}{(N-1)!}+\text{(higher order term)}$ (see p. $242$ \cite{J}). Then by the definition of $R(z)$, we have 
\begin{align} \label{first term R}
R(z)=\dfrac{z^{N-1}}{(N-1)!}+\text{(higher order term)}.
\end{align}
On the other hand, in p.~$245$ \cite{J}, Jager proved that the function $$r(z):=\dfrac{1}{2\pi\sqrt{-1}}\int_{C}\dfrac{(1+z)^x}{\prod_{j=0}^{m-1}\prod_{h=0}^{n_j}(x-h)}dx,$$
where $C$ is a contour with positive orientation enclosing the set $\{0,1,\ldots,n_0\}$, is a weight $\bold{n}$ Pad\'{e} approximation of $(1,{\rm{log}}(1+z),\ldots,{\rm{log}}^{m-1}(1+z))$ and $r(z)$ satisfies 
\begin{align} \label{first term r}
r(z)=\dfrac{z^{N-1}}{(N-1)!}+\text{(higher order term)} \ \text{for} \ z\in\{z\in\C \mid |z|<1\}.
\end{align}
Since $\bold{n}$ is normal with respect to $(1,{\rm{log}}(1+z),\ldots,{\rm{log}}^{m-1}(1+z))$, a weight $\bold{n}$ Pad\'{e} approximation of  $(1,{\rm{log}}(1+z),\ldots,{\rm{log}}^{m-1}(1+z))$ is uniquely determined up to constant. Thus, by $(\ref{first term R})$ and $(\ref{first term r})$, we obtain
\begin{align} \label{integral rep R}
R(z)=\dfrac{1}{2\pi\sqrt{-1}}\int_{C}\dfrac{(1+z)^x}{\prod_{j=1}^{m-1}\prod_{h=0}^{n_j}(x-h)}dx.
\end{align}
\end{remark}
\section{Estimations}
From this section to the last section, we use the following notations for $m\in \Z_{\ge2}$, $n\in \Z_{\ge0}$ and $1\le i \le m$:
\begin{align*}
&d_{n+1}:={\rm{l.c.m.}}(1,2,\ldots,n+1),\\
&\bold{n}_i:=(\overbrace{n+1,\ldots,n+1}^{i},n,\ldots,n)\in \Z^{m}_{\ge0},\\
&\bold{m}_i:=(m-1,\ldots,m-1,i-1)\in \Z^{n+2}_{\ge0},\\
&\boldsymbol{\omega}:=(0,\ldots,n,n+1),\\
&Q_{m,i,n+1}(x):=\left[\prod_{h=0}^n(x-h)^m\right]\times (x-n-1)^i.
\end{align*}
We define the set of rational numbers $\{a_{h,j}(\bold{m}_i,\boldsymbol{\omega})\}_{1\le i \le m, 0\le h \le n+1,1\le j \le m}$ satisfying the equality
\begin{align*}
\dfrac{1}{Q_{m,i,n+1}(x)}=\sum_{h=0}^{n+1}\sum_{j=1}^{m}\dfrac{a_{h,j}(\bold{m}_i,\boldsymbol{\omega})}{(x-h)^j} \ \text{for} \ 1\le i \le m.
\end{align*}
By Proposition $\ref{Pade e}$ and Proposition $\ref{pade log}$, the formal power series
\begin{align}
&\mathcal{R}_{i,n+1}(z):=\sum_{h=0}^{n+1}\left(\sum_{j=0}^{m-1}a_{h,j+1}(\bold{m}_i,\boldsymbol{\omega})\dfrac{z^j}{j!}\right)e^{hz}, \label{exp pade S}\\
&R_{i,n+1}(z):=\sum_{j=0}^{m-1}\left(\dfrac{\sum_{h=0}^{n+1}a_{h,j+1}(\bold{m}_i,\boldsymbol{\omega})(1+z)^h}{j!}\right){\rm{log}}^{j}(1+z), \label{log pade R}
\end{align}
are weight $\bold{m}_i$ Pad\'{e} approximation of $1,e^z,\ldots,e^{(n+1)z}$ and weight $\bold{n}_i$ Pad\'{e} approximation of $1,{\rm{log}}(1+z),\ldots,{\rm{log}}^{m-1}(1+z)$ respectively. 
We define
\begin{align} \label{coeff polynomial}
A_{i,j,n+1}(z):=\dfrac{\sum_{h=0}^{n+1}a_{h,j+1}(\bold{m}_i,\boldsymbol{\omega})(1+z)^h}{j!} \ \text{for} \ 1\le i \le m, 0\le j \le m-1.
\end{align}
\begin{lemma} \label{denominator} $($cf. {\rm{\cite[Theorem $1 (a)$]{M2}}}$)$
We use the notations as above. For any $1\le i \le m$, $0\le j \le m-1$ and $0\le h \le n+1$, we have 
\begin{align} \label{tisai denominator}
d^m_{n+1}(n+1)!^m a_{h,j}(\bold{m}_i,\boldsymbol{\omega})\in \Z.
\end{align}
Especially, for an algebraic number field $K$ and an element $\alpha\in K$, we have 
\begin{align} \label{denomi2}
d^m_{n+1}(n+1)!^m(m-1)!{\rm{den}}^{n+1}(\alpha)A_{i,j,n+1}(\alpha)\in \mathcal{O}_K,
\end{align}
for $1\le i \le m$, $0\le j \le m-1$.
\end{lemma}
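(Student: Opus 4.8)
The plan is to first prove the integrality statement $(\ref{tisai denominator})$ for the partial-fraction coefficients $a_{h,j}(\bold{m}_i,\boldsymbol{\omega})$, and then to deduce $(\ref{denomi2})$ by expanding $A_{i,j,n+1}$ in powers of $1+\alpha$ and clearing denominators coefficient by coefficient.

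For $(\ref{tisai denominator})$ I would fix $1\le i\le m$ and $0\le h_0\le n+1$ and write $\mu_h$ for the multiplicity of the root $h$ of $Q_{m,i,n+1}(x)$, so $\mu_h=m$ for $0\le h\le n$ and $\mu_{n+1}=i$; if $j>\mu_{h_0}$ the coefficient is $0$, so one may assume $1\le j\le\mu_{h_0}$. The standard formula for partial fractions (as in \cite[Theorem $1$]{M2}) identifies $a_{h_0,j}(\bold{m}_i,\boldsymbol{\omega})$ with the coefficient of $t^{\mu_{h_0}-j}$ in the Taylor expansion at $t=0$ of
\begin{align*}
\prod_{\substack{0\le l\le n+1\\ l\ne h_0}}\frac{1}{(h_0-l+t)^{\mu_l}}=\left(\prod_{\substack{0\le l\le n+1\\ l\ne h_0}}\frac{1}{(h_0-l)^{\mu_l}}\right)\prod_{\substack{0\le l\le n+1\\ l\ne h_0}}\left(1+\frac{t}{h_0-l}\right)^{-\mu_l}.
\end{align*}
Expanding each factor by the binomial series $(1+u)^{-\mu_l}=\sum_{k\ge0}(-1)^k\binom{\mu_l+k-1}{k}u^k$ and collecting the coefficient of $t^{\mu_{h_0}-j}$ exhibits $a_{h_0,j}(\bold{m}_i,\boldsymbol{\omega})$ as a $\Z$-linear combination of terms $\prod_{l\ne h_0}(h_0-l)^{-\mu_l-k_l}$ with $k_l\in\Z_{\ge0}$ and $\sum_{l\ne h_0}k_l=\mu_{h_0}-j\le m$, the scalar multipliers being products of the integers $\binom{\mu_l+k-1}{k}$.

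The denominator of such a term I would control as follows. Each $h_0-l$ with $l\in\{0,\dots,n+1\}\setminus\{h_0\}$ is a nonzero integer of absolute value at most $n+1$, hence divides $d_{n+1}$, so $\prod_{l\ne h_0}|h_0-l|^{k_l}$ divides $d_{n+1}^{\sum k_l}$ and therefore $d_{n+1}^{m}$; on the other hand $\prod_{l\ne h_0}|h_0-l|=h_0!\,(n+1-h_0)!$ divides $(n+1)!$ since $\binom{n+1}{h_0}\in\Z$, and as every $\mu_l\le m$ this forces $\prod_{l\ne h_0}|h_0-l|^{\mu_l}$ to divide $(n+1)!^{m}$. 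Multiplying these two divisibilities shows that $d_{n+1}^{m}(n+1)!^{m}$ clears every term of the linear combination, which is $(\ref{tisai denominator})$. For $(\ref{denomi2})$ I would then use the definition $(\ref{coeff polynomial})$, namely $j!\,A_{i,j,n+1}(\alpha)=\sum_{h=0}^{n+1}a_{h,j+1}(\bold{m}_i,\boldsymbol{\omega})(1+\alpha)^h$: since ${\rm{den}}(1+\alpha)={\rm{den}}(\alpha)$ and $h\le n+1$, each ${\rm{den}}(\alpha)^{n+1}(1+\alpha)^h={\rm{den}}(\alpha)^{n+1-h}\bigl({\rm{den}}(\alpha)(1+\alpha)\bigr)^h$ lies in $\mathcal{O}_K$, so together with $(\ref{tisai denominator})$ the element $d_{n+1}^{m}(n+1)!^{m}{\rm{den}}^{n+1}(\alpha)\,j!\,A_{i,j,n+1}(\alpha)$ lies in $\mathcal{O}_K$, and multiplying by the rational integer $(m-1)!/j!$ (available because $0\le j\le m-1$) yields $(\ref{denomi2})$.

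The main obstacle will be the denominator bookkeeping in the partial-fraction expansion: one must verify that the total denominator never exceeds the prescribed budget $d_{n+1}^{m}(n+1)!^{m}$, and this relies precisely on the two numerical facts that every multiplicity $\mu_l$ is at most $m$ (so the "$(n+1)!^m$ part" suffices) and that the exponents $k_l$ arising in the binomial expansion satisfy $\sum_{l\ne h_0}k_l=\mu_{h_0}-j\le m$ (so the "$d_{n+1}^m$ part" suffices).
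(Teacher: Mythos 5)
Your proposal is correct and follows essentially the same route as the paper: both identify $a_{h,j}(\bold{m}_i,\boldsymbol{\omega})$ with a Taylor coefficient of $\prod_{l\ne h}(x-l)^{-\mu_l}$ at $x=h$ and split the resulting denominator into a part bounded by $(n+1)!^m$ (coming from the multiplicities $\mu_l\le m$, i.e.\ the leading constant $\lambda!^m(n-\lambda)!^m(n+1-\lambda)^i$ in the paper's notation) and a part bounded by $d_{n+1}^{m}$ (coming from the at most $m$ extra factors $|h-l|\mid d_{n+1}$ produced by the expansion, which the paper packages via the substitution $t=(x-\lambda)/d_{n+1}$). The deduction of $(\ref{denomi2})$ from $(\ref{tisai denominator})$ via ${\rm{den}}(1+\alpha)={\rm{den}}(\alpha)$ and the integer factor $(m-1)!/j!$ is likewise exactly what the paper intends.
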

\begin{proof}
Recall that,  by the definition of $a_{h,j}(\bold{m}_i,\boldsymbol{\omega})$, we have 
\begin{align} \label{fukusyu}
\dfrac{1}{Q_{m,i,n+1}(x)}=\sum_{j=1}^{m}\sum_{h=0}^{n+1} \dfrac{a_{h,j}(\bold{m}_i,\boldsymbol{\omega})}{(x-h)^j}.
\end{align}
Fix a nonzero integer $\lambda$ satisfying $0\le \lambda \le n$. By the definition of $Q_{m,i,n+1}(x)$, we have the following equalities:
\begin{align}
&\dfrac{1}{Q_{m,i,n+1}(x)}\nonumber \\
&=\dfrac{1}{(x-\lambda)^m}\prod_{\delta=1}^{\lambda}\dfrac{1}{(x-\lambda+\delta)^m}\prod_{\nu=1}^{n-\lambda}\dfrac{1}{(x-\lambda-\nu)^m} 
\dfrac{1}{(x-\lambda-(n+1-\lambda))^i} \nonumber\\
&=\dfrac{(-1)^{(n-\lambda)m+i}}{\lambda !^m(n-\lambda)!^m(n+1-\lambda)^i} \dfrac{1}{(x-\lambda)^m}\prod_{\delta=1}^{\lambda}\left(1+\dfrac{x-\lambda}{\delta}\right)^{-m} \prod_{\nu=1}^{n-\lambda}\left(1-\dfrac{x-\lambda}{\nu}\right)^{-m} \left(1-\dfrac{x-\lambda}{n+1-\lambda}\right)^{-i}. \label{lambda exp}
\end{align}
Since we have
\begin{align*}
\dfrac{d_{n+1}}{\delta}, \dfrac{d_{n+1}}{\nu}\in \Z \ \text{for} \ \delta=1,\ldots,\lambda \ \text{and} \ \nu=1,\ldots,n-\lambda,n+1-\lambda,
\end{align*} 
then there exist a set of integers $\{c_{i,k}\}_{k\in\Z_{\ge0}}$ satisfying 
\begin{align} \label{bekikyuusuu}
\prod_{\delta=1}^{\lambda}\left(1+\dfrac{d_{n+1}}{\delta}t\right)^{-m} \prod_{\nu=1}^{n-\lambda}\left(1-\dfrac{d_{n+1}}{\nu}t\right)^{-m} \left(1-\dfrac{d_{n+1}}{n+1-\lambda}t\right)^{-i}=\sum_{k=0}^{\infty}c_{i,k}t^k,
\end{align} 
where $t$ is an intermediate. Substituting $t=\dfrac{x-\lambda}{d_{n+1}}$ in the equality $(\ref{bekikyuusuu})$, we obtain 
\begin{align}\label{bekiyuusuu2}
\prod_{\delta=1}^{\lambda}\left(1+\dfrac{x-\lambda}{\delta}\right)^{-m} \prod_{\nu=1}^{n-\lambda}\left(1-\dfrac{x-\lambda}{\nu}\right)^{-m} \left(1-\dfrac{x-\lambda}{n+1-\lambda}\right)^{-i}=\sum_{k=0}^{\infty}c_{i,k}d^{-k}_{n+1}(x-\lambda)^k.
\end{align}
Substituting $(\ref{bekiyuusuu2})$ for the equality $(\ref{lambda exp})$ and compare the equality $(\ref{fukusyu})$ and $(\ref{lambda exp})$, we have
\begin{align} \label{key kill denomi}
a_{\lambda,j}(\bold{m}_i,\boldsymbol{\omega})=\dfrac{(-1)^{(n-\lambda)m+i}}{\lambda !^m(n-\lambda)!^m(n+1-\lambda)^i}d^{-m+j}_{n+1} c_{i,m-j}.
\end{align}
By the relation
$(n+1)!^m\dfrac{1}{\lambda !^m(n-\lambda)!^m(n+1-\lambda)^i}\in \Z$ and the equality $(\ref{key kill denomi})$, we obtain 
$$d^m_{n+1}(n+1)!^m a_{\lambda,j}(\bold{m}_i,\boldsymbol{\omega})\in \Z \ \text{for} \ 0\le \lambda \le n.$$
In the case of $\lambda=n+1$, by using the same method as above, we also obtain $$d^m_{n+1}(n+1)!^m a_{n+1,j}(\bold{m}_i,\boldsymbol{\omega})\in \Z.$$  This completes the proof of $(\ref{tisai denominator})$. The latter assertions are obtained by $(\ref{tisai denominator})$ and the definition of $A_{i,j,n+1}(z)$. This completes the proof of Lemma $\ref{denominator}$.
\end{proof}
\begin{lemma} \label{keisu ookisa} $($cf. {\rm{\cite[Theorem $1 (b)$]{M2}}}$)$
Let $\alpha$ be a complex number. Then we have
\begin{align}
|A_{i,j,n+1}(\alpha)|\le \dfrac{2^m(1+|\alpha|)}{|\alpha|}(n+1)^{m} ((1+|\alpha|)2^{m})^{n+1}n!^{-m}
\end{align}
for any $1\le i \le m$, $0\le j \le m-1$ and $n\in \Z_{\ge 0}$.
\end{lemma}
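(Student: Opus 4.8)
The plan is to start from the explicit formula for the coefficients $a_{h,j}(\bold{m}_i,\boldsymbol{\omega})$ obtained in the proof of Lemma \ref{denominator}, in particular the identity \eqref{key kill denomi}
\[
a_{\lambda,j}(\bold{m}_i,\boldsymbol{\omega})=\dfrac{(-1)^{(n-\lambda)m+i}}{\lambda !^m(n-\lambda)!^m(n+1-\lambda)^i}d^{-m+j}_{n+1} c_{i,m-j},
\]
together with the analogous formula at $\lambda=n+1$. The key quantities to control are the integers $c_{i,k}$ defined by the power-series expansion \eqref{bekikyuusuu}, or rather the rational coefficients $c_{i,k}d_{n+1}^{-k}$ appearing in \eqref{bekiyuusuu2}. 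First I would bound these latter coefficients directly from \eqref{bekiyuusuu2}: the function $\prod_{\delta=1}^{\lambda}(1+u/\delta)^{-m}\prod_{\nu=1}^{n-\lambda}(1-u/\nu)^{-m}(1-u/(n+1-\lambda))^{-i}$ is a product of binomial-type series whose coefficients are majorized by those of $\prod(1-u/\nu)^{-m}$ type bounds; comparing with the generating function of $\binom{n+\text{something}}{\cdot}$ one gets a clean estimate of the shape (coefficient of $u^{k}$) $\le \binom{n+k}{k}^{?}$, which after summation contributes the factor $2^{m(n+1)}$ (or $(1+|\alpha|)^{n+1}2^{m(n+1)}$ once the radius $|\alpha|$ is inserted).

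Next I would assemble the bound for $A_{i,j,n+1}(\alpha)$ from its definition \eqref{coeff polynomial}: $A_{i,j,n+1}(\alpha)=\tfrac{1}{j!}\sum_{h=0}^{n+1}a_{h,j+1}(\bold{m}_i,\boldsymbol{\omega})(1+\alpha)^h$. Using $|(1+\alpha)^h|\le (1+|\alpha|)^{n+1}$ for $0\le h\le n+1$, $1/j!\le 1$, and the per-coefficient estimate from the previous step, the sum over $h$ of the factors $\tfrac{1}{\lambda!^m(n-\lambda)!^m(n+1-\lambda)^i}$ must be controlled. The dominant term comes from balancing $\lambda!^m(n-\lambda)!^m$, whose minimum over $\lambda$ behaves like $(n!/2^n)^m$ up to harmless polynomial factors; this is exactly where the $n!^{-m}$ and one power of $2^{mn}$ in the claimed inequality originate, and the leftover $(n+1)^m$ absorbs the number of terms and the $(n+1-\lambda)^{-i}$ nuisance factor (worst at $\lambda=n$ or $\lambda=n+1$, where one should treat $\lambda=n+1$ separately exactly as in Lemma \ref{denominator}). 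Collecting $d_{n+1}^{-m+j+1}$: since $j+1\le m$ we have $-m+j+1\le 0$, and $d_{n+1}\ge 1$, so this factor is $\le 1$ and can simply be discarded, which is why no $d_{n+1}$ appears in the final bound.

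The main obstacle I expect is getting the constants to come out as sharp as stated — in particular producing precisely $\tfrac{2^m(1+|\alpha|)}{|\alpha|}(n+1)^m\bigl((1+|\alpha|)2^m\bigr)^{n+1}n!^{-m}$ rather than a cruder version with extra polynomial-in-$n$ or constant factors. The factor $\tfrac{1+|\alpha|}{|\alpha|}$ in front (as opposed to just $(1+|\alpha|)^{n+2}$) suggests that one should not bound $|(1+\alpha)^h|$ by $(1+|\alpha|)^{n+1}$ uniformly but instead sum the geometric-type series $\sum_h (\text{binomial coeff})\,|1+\alpha|^h$ more carefully, or alternatively use the contour-integral / Cauchy-estimate representation analogous to \eqref{integral rep R} for these partial-fraction coefficients, choosing the contour radius optimally; the factor $\tfrac{1}{|\alpha|}$ is the typical artifact of such an optimization (a residue at a pole of size $|\alpha|$). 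I would follow Mahler's argument in \cite[Theorem~$1(b)$]{M2} line by line, since the statement is explicitly quoted from there, and only adapt the bookkeeping to the present normalization $\bold{m}_i=(m-1,\ldots,m-1,i-1)$ with the extra node $n+1$ of multiplicity $i$.
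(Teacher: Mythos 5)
Your sketch lands on the right quantitative mechanisms but takes a different primary route from the paper, and one step of that route would fail as literally written. The paper never returns to the partial--fraction coefficients $c_{i,k}$ from the proof of Lemma \ref{denominator}: instead it represents $a_{\lambda,j}$ by the Cauchy integral \eqref{residue} over the small circle $|z-\lambda|=\tfrac12$, lower-bounds $|Q_{m,i,n+1}(z)|$ there via $|z-h|\ge|\lambda-h|-\tfrac12$, rewrites the resulting product of half-integers in terms of binomial coefficients, and applies Mahler's inequality \eqref{Mahler} to get the uniform bound $|a_{\lambda,j}|\le(n+1)^m2^{(n+2)m}n!^{-m}$; the prefactor $\tfrac{1+|\alpha|}{|\alpha|}$ then comes from nothing deeper than the geometric sum $\sum_{h=0}^{n+1}(1+|\alpha|)^h\le(1+|\alpha|)^{n+2}/|\alpha|$. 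So of the two explanations you offer for the $1/|\alpha|$, the first (summing over $h$ instead of bounding each term by $(1+|\alpha|)^{n+1}$) is the correct one; it is not a contour-optimization or residue artifact. This Cauchy-estimate path is exactly the ``alternative'' you name at the end and is what Mahler's Theorem $1(b)$ actually does, so had you committed to it you would have reproduced the paper's proof.

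Two concrete repairs are needed if you insist on the route through \eqref{key kill denomi}. First, you propose to ``discard'' the factor $d_{n+1}^{-m+j+1}$ because it is $\le 1$; but by \eqref{bekikyuusuu} the integers $c_{i,k}$ are of size roughly $d_{n+1}^{k}\approx e^{k(n+1)}$, so after discarding that factor you are left bounding an exponentially large integer and the lemma is lost. What must be estimated is the combination $c_{i,k}d_{n+1}^{-k}$, i.e.\ the genuine Taylor coefficient in \eqref{bekiyuusuu2} --- which is what you say one sentence earlier, so the two statements are inconsistent and only the first is usable. Second, majorizing that Taylor coefficient by comparison with $(1-u)^{-m(n+1)-i}$ gives something like $\binom{m(n+1)+i+k-1}{k}$ for $k\le m$, which is polynomial in $n$ of the right degree $m$ but carries an extra factor of order $m^m$; together with the correct observation $\lambda!(n-\lambda)!\ge n!2^{-n}$ this yields the lemma only with a worse constant than stated. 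Since the statement is quoted with its exact constants and is then fed into $T^{(k)}(\alpha)$ and $\mathcal{A}^{(k)}(\alpha)$ in the main theorems, you should either switch to the residue/Cauchy argument or accept a weaker constant and propagate it.
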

\begin{proof}
In our proof of Lemma $\ref{keisu ookisa}$, we refer some of the arguments of {\cite[Theorem $1 (b)$]{M2}}.
First we remark that the $a_{\lambda,j}(\bold{m}_i,\boldsymbol{\omega})$ can be represented as follows:
\begin{align} \label{residue}
a_{\lambda,j}(\bold{m}_i,\boldsymbol{\omega})=\dfrac{1}{2\pi\sqrt{-1}}\int_{|z-\lambda|=\tfrac{1}{2}}(z-\lambda)^{j-1}\dfrac{1}{Q_{m,i,n+1}(z)}dz.
\end{align}
Let $\lambda$ be a natural number satisfying $0\le \lambda \le n+1$. Then by the equality $(\ref{residue})$, we have the following inequality:
\begin{align} \label{a1}
|a_{\lambda,j}(\bold{m}_i,\boldsymbol{\omega})|\le\dfrac{1}{2\pi}2^{1-j}\pi\cdot {\rm{sup}}_{|z-\lambda|=1/2}\left|\dfrac{1}{Q_{m,i,n+1}(z)}\right|.
\end{align}
Next, we estimate a lower bound of ${\rm{sup}}_{|z-\lambda|=1/2}\left|{Q_{m,i,n+1}(z)}\right|$. Since we have the inequality $$|z-h|=|z-\lambda+\lambda-h|\ge |\lambda-h|-\tfrac{1}{2},$$ for natural number $h$ satisfying $0\le h \le n+1$ and $h\neq \lambda$ and $z\in\{z\in \C| \ |z-\lambda|=\tfrac{1}{2}\}$, we obtain the following inequalities: 
\begin{align} \label{ineq Q}
|Q_{m,i,n+1}(z)|&\ge\begin{cases}
\left(\prod_{\delta=1}^{\lambda}(\delta-\tfrac{1}{2})\right)^m\left(\tfrac{1}{2}\right)^m\left(\prod_{\nu=1}^{n-\lambda}(\nu-\tfrac{1}{2})\right)^{m}(n+1-\lambda-\tfrac{1}{2})^i & \ \text{if} \ 0\le \lambda \le n, \\
\left(\prod_{\delta=1}^{n+1}(\delta-\tfrac{1}{2})\right)^m \left(\tfrac{1}{2}\right)^i  & \ \text{if} \ \lambda=n+1.
\end{cases}
\end{align}
In the case of $0\le \lambda \le n$, using the inequality $(\ref{ineq Q})$, we obtain  
\begin{align} \label{zero en}
|Q_{m,i,n+1}(z)|&\ge \left(\prod_{\delta=1}^{\lambda}(\delta-\tfrac{1}{2})\right)^m\left(\prod_{\nu=1}^{n-\lambda}(\nu-\tfrac{1}{2})\right)^{m} \left(\tfrac{1}{2}\right)^{2m} \nonumber\\
                &=\left(\dfrac{(2\lambda)!}{\lambda!2^{2\lambda}}\right)^m\left(\dfrac{(2n-2\lambda)!}{(n-\lambda)!2^{2(n-\lambda)}}\right)^m \left(\tfrac{1}{2}\right)^{2m}\nonumber\\
                &=\left({\binom{2n}{2\lambda}}^{-1}\binom{n}{\lambda}\binom{2n}{n}n!2^{-2n-2}\right)^{m}.
\end{align}
Combining the inequality (see Proof of {\cite[Theorem $1$, p.~$376$]{M2}})
\begin{align} \label{Mahler}
{\binom{2n}{2\lambda}}^{-1}\binom{n}{\lambda}\binom{2n}{n}\ge \dfrac{2^n}{n+1} \ \text{for} \ 0 \le \lambda \le n,
\end{align}
and $(\ref{zero en})$, we obtain 
\begin{align} \label{lambda conclusion}
|Q_{m,i,n+1}(z)|\ge (n+1)^{-m} 2^{-(n+2)m}n!^m  \ \text{for} \ z\in \{z\in\C| \ |z-\lambda|=\tfrac{1}{2}\}.
\end{align}
In the case of $\lambda=n+1$, using the inequality $(\ref{ineq Q})$, we obtain    
\begin{align} \label{n+1}
|Q_{m,i,n+1}(z)|\ge \left(\binom{2n+2}{n+1}(n+1)!2^{-2n-3}\right)^{m} \ \text{for} \ z\in \{z\in\C| \ |z-n-1|=\tfrac{1}{2}\}.
\end{align}
By the same arguments as above, from $(\ref{n+1})$, we obtain 
\begin{align} \label{n+1 2}
|Q_{m,i,n+1}(z)|\ge  (n+2)^{-m} 2^{-(n+2)m}(n+1)!^m\ge (n+1)^{-m} 2^{-(n+2)m}n!^m \ \text{for} \ z\in \{z\in\C| \ |z-n-1|=\tfrac{1}{2}\}.
\end{align}
Using the inequalities $(\ref{a1})$, $(\ref{lambda conclusion})$ and $(\ref{n+1 2})$, we obtain
\begin{align} \label{a conclusion}
|a_{\lambda,j}(\bold{m}_i,\boldsymbol{\omega})|\le (n+1)^{m} 2^{(n+2)m}n!^{-m},
\end{align}
for $0\le \lambda \le n+1$, $1\le j \le m$ and $1\le i \le m$. By the definition of $P_{i,j,n+1}(z)$ and use the inequalities $(\ref{a conclusion})$, we obtain 
\begin{align*}
|A_{i,j,n+1}(\alpha)|&\le (n+1)^{m} 2^{(n+2)m}n!^{-m}\sum_{h=0}^{n+1}(1+|\alpha|)^h \le \dfrac{(1+|\alpha|)^{n+2}}{|\alpha|}(n+1)^{m} 2^{(n+2)m}n!^{-m},
\end{align*}
for $\alpha\in \C$. This completes the proof of Lemma $\ref{keisu ookisa}$.
\end{proof}
\begin{lemma} \label{uekara jyouyokou}
Let $m$ be a natural number $m\ge 2$.
Let $\alpha\in \C\setminus \{0,-1\}$ satisfying $2\le m/|{\rm{log}}(1+\alpha)|$. 
Then we have
\begin{align*} 
&|R_{i,n+1}(\alpha)|\le {\rm{exp}}\left({\dfrac{2|{\rm{log}}(1+\alpha)|}{1+\sqrt{1+4|{\rm{log}}(1+\alpha)|}}}\right)\times \\
&\left[
{\rm{exp}}
\left(\dfrac{m(1+\sqrt{1+4|{\rm{log}}(1+\alpha)|})}{2}+\dfrac{2|{\rm{log}}(1+\alpha)|}{1+\sqrt{1+4|{\rm{log}}(1+\alpha)|}} \right)
\left(\dfrac{|{\rm{log}}(1+\alpha)|}{m}\right)^m
\right]^{n+1}(n+1)^{-m(n+1)}.
\end{align*}
\end{lemma}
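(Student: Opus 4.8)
The plan is to represent $R_{i,n+1}(\alpha)$ by an Euler-type integral over a simplex, estimate the integrand trivially, and then reduce to an elementary inequality for $|{\rm{log}}(1+\alpha)|$. Throughout write $L:={\rm{log}}(1+\alpha)$ and $N:=m(n+1)+i$, and note that the hypothesis $2\le m/|{\rm{log}}(1+\alpha)|$ means $|L|\le m/2$.

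First I would pass to a simplex representation. For $\bold{n}_i=(\overbrace{n+1,\dots,n+1}^{i},n,\dots,n)$ the denominator product in the integral representation $(\ref{integral rep R})$ is $\big[\prod_{h=0}^{n+1}(x-h)\big]^{i}\big[\prod_{h=0}^{n}(x-h)\big]^{m-i}=Q_{m,i,n+1}(x)$, so $R_{i,n+1}(\alpha)=\frac{1}{2\pi\sqrt{-1}}\int_{C}(1+\alpha)^{x}Q_{m,i,n+1}(x)^{-1}\,dx$ for $1+\alpha\notin\R_{\le0}$ ($C$ a circle enclosing $[0,n+1]$; both sides are analytic there and agree on $|\alpha|<1$). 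I would then apply the Feynman--Euler parametrization
$$Q_{m,i,n+1}(x)^{-1}=\frac{(N-1)!}{(m-1)!^{n+1}(i-1)!}\int_{\Delta}\Big(\prod_{h=0}^{n}t_{h}^{m-1}\Big)t_{n+1}^{i-1}(x-\langle t\rangle)^{-N}\,dt,$$
where $\Delta=\{(t_{1},\dots,t_{n+1}):t_{h}\ge0,\ \sum_{h\ge1}t_{h}\le1\}$, $t_{0}:=1-\sum_{h\ge1}t_{h}$, $\langle t\rangle:=\sum_{h=0}^{n+1}ht_{h}\in[0,n+1]$, interchange the two integrations (legitimate since the integrand is bounded on the compact set $C\times\Delta$), and evaluate the inner contour integral by Cauchy's differentiation formula, $\frac{1}{2\pi\sqrt{-1}}\int_{C}(1+\alpha)^{x}(x-c)^{-N}\,dx=(1+\alpha)^{c}L^{N-1}/(N-1)!$ for $c\in[0,n+1]$. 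This gives
$$R_{i,n+1}(\alpha)=\frac{L^{N-1}}{(m-1)!^{n+1}(i-1)!}\int_{\Delta}\Big(\prod_{h=0}^{n}t_{h}^{m-1}\Big)t_{n+1}^{i-1}\,e^{L\langle t\rangle}\,dt.$$

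Since $\langle t\rangle$ is real with $0\le\langle t\rangle\le n+1$, we have $|e^{L\langle t\rangle}|=e^{(\Re L)\langle t\rangle}\le e^{|L|(n+1)}$, so with the Dirichlet integral $\int_{\Delta}\big(\prod_{h=0}^{n}t_{h}^{m-1}\big)t_{n+1}^{i-1}\,dt=(m-1)!^{n+1}(i-1)!/(N-1)!$ one gets $|R_{i,n+1}(\alpha)|\le|L|^{N-1}e^{|L|(n+1)}/(N-1)!$. Two elementary facts then finish the proof. First, $(N-1)!=(m(n+1)+i-1)!\ge(m(n+1)-1)!\,(m(n+1))^{i}\ge(m(n+1))^{N-1}e^{-m(n+1)}$ by $k!\ge(k/e)^{k}$; since $\frac{|L|}{m(n+1)}\le\frac{|L|}{m}\le\frac12<1$, $N-1\ge m(n+1)$, and $(m(n+1))^{m(n+1)}=m^{m(n+1)}(n+1)^{m(n+1)}$, this yields $|R_{i,n+1}(\alpha)|\le\big(\frac{|L|}{m}\big)^{m(n+1)}(n+1)^{-m(n+1)}e^{(|L|+m)(n+1)}$. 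Second, $(\sqrt{1+4|L|})^{2}=1+4|L|$ gives $\frac{\sqrt{1+4|L|}-1}{2}=\frac{2|L|}{1+\sqrt{1+4|L|}}$, hence $\frac{m(1+\sqrt{1+4|L|})}{2}=m+\frac{2m|L|}{1+\sqrt{1+4|L|}}$; and $|L|\le m/2\le m(m+1)$ forces $\sqrt{1+4|L|}\le 2m+1$, i.e. $\frac{2(m+1)|L|}{1+\sqrt{1+4|L|}}\ge|L|$, whence $|L|+m\le\frac{m(1+\sqrt{1+4|L|})}{2}+\frac{2|L|}{1+\sqrt{1+4|L|}}$. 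Substituting this into the exponential above, regrouping as an $(n+1)$-th power, and multiplying by ${\rm{exp}}\big(\frac{2|L|}{1+\sqrt{1+4|L|}}\big)\ge1$ gives precisely the asserted inequality.

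The only substantive step is the first: getting the Feynman--Euler parametrization of $Q_{m,i,n+1}^{-1}$ right (exponent $m$ at the poles $0,\dots,n$ and $i$ at $n+1$), justifying the interchange of integrals, and matching all constants through the Dirichlet integral — a convenient sanity check being that the resulting formula has leading term $z^{N-1}/(N-1)!$, in agreement with $(\ref{first term R})$. Everything after that is forced; the only genuine trick is the elementary identity that slips $|L|+m$ underneath $\frac{m(1+\sqrt{1+4|L|})}{2}+\frac{2|L|}{1+\sqrt{1+4|L|}}$, and the factor $(n+1)^{-m(n+1)}$ in the statement is just what $(m(n+1))^{m(n+1)}=m^{m(n+1)}(n+1)^{m(n+1)}$ produces out of Stirling. (Note that a naive circle contour for the original integral does \emph{not} give this clean bound — the resonance of the poles forcing the $m^{-m(n+1)}$ is captured precisely by the residue evaluation in the simplex representation.)
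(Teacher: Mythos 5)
Your proof is correct, and I checked the details: the Feynman--Euler constants, the Dirichlet integral, and both elementary endgame steps all work out. Writing $L={\rm{log}}(1+\alpha)$ and $N=m(n+1)+i$, your intermediate bound $|R_{i,n+1}(\alpha)|\le |L|^{N-1}e^{|L|(n+1)}/(N-1)!$ is in fact slightly sharper than the stated inequality, which carries the extra harmless prefactor ${\rm{exp}}\left(2|L|/(1+\sqrt{1+4|L|})\right)\ge1$. However, your route is genuinely different from the paper's. The paper follows Mahler: it keeps the contour representation $(\ref{integ rep R i n+1})$ over a large circle $C_\rho$, proves the lower bound $|Q_{m,i,n+1}(x)|\ge\rho^{m(n+1)+1}{\rm{exp}}\left(-m(n+1)(n+2)/\rho\right)$ on $|x|=\rho$ with $\rho\ge2(n+1)$, and then chooses $\rho=\tfrac{m(n+1)(1+\sqrt{1+4|L|})}{2|L|}$ to essentially minimize $\rho|L|+m(n+1)(n+2)/\rho-m(n+1){\rm{log}}(\rho)$; the hypothesis $2\le m/|L|$ is used precisely to guarantee $\rho\ge2(n+1)$, and the two constants $\tfrac{m(1+\sqrt{1+4|L|})}{2}$ and $\tfrac{2|L|}{1+\sqrt{1+4|L|}}$ in the statement are the per-$(n+1)$ contributions of $\rho|L|$ and $m(n+1)(n+2)/\rho$ at that choice of $\rho$. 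You instead turn the contour integral into an exact Hermite-type simplex representation whose absolute value is transparent, and then recover the stated shape by Stirling ($k!\ge(k/e)^k$) together with the elementary inequality $m+|L|\le\tfrac{m(1+\sqrt{1+4|L|})}{2}+\tfrac{2|L|}{1+\sqrt{1+4|L|}}$, which as you note reduces to $|L|\le m(m+1)$. What the paper's argument buys is uniformity of technique with Lemma $\ref{keisu ookisa}$ (everything is a circle-contour estimate with no auxiliary identities); what yours buys is a cleaner, marginally stronger bound that uses the hypothesis $2\le m/|L|$ only through the far weaker conditions $|L|<m(n+1)$ and $|L|\le m(m+1)$. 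The one point in your write-up that deserves an explicit sentence is the validity of the Feynman--Euler identity for $x$ on the contour, where the quantities $\Re(x-h)$ need not all be positive; since $N$ is a positive integer, $(x-\langle t\rangle)^{-N}$ is single-valued, both sides of the identity are analytic in $x$ on $\C\setminus[0,n+1]$, and they agree for $x$ real and large, so analytic continuation closes this.
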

\begin{proof}
This proof is based on that of {\cite[Theorem $1$]{M2}}.
By $(\ref{integral rep R})$, we have 
\begin{align} \label{integ rep R i n+1}
R_{i,n+1}(z)=\dfrac{1}{2\pi\sqrt{-1}}\int_{C_{\rho}}\dfrac{(1+z)^x}{(\prod_{h=0}^{n}(x-h))^m(x-n-1)^i}dx \ \text{for} \ 1 \le i \le m,
\end{align}
where $C_{\rho}$ is a circle in the $x$-plane of center $x=0$ and radius $\rho>n+1.$
In the following, we take a positive real number $\rho$ satisfying $\rho\ge 2(n+1)$.
For $x\in C_{\rho}$, we have
\begin{align}
\left|(\prod_{h=0}^{n}(x-h))^m(x-n-1)^i\right|&=\left|x^{m(n+1)+i}\left[\left(1+\dfrac{1}{x}\right)\ldots \left(1+\dfrac{n}{x}\right)\right]^{m}\left(1+\dfrac{n+1}{x}\right)^i\right| \nonumber\\
                                              &\ge \rho^{m(n+1)+i}\left[\left(1-\dfrac{1}{\rho}\right)\ldots \left(1-\dfrac{n}{\rho}\right)\right]^{m}\left(1-\dfrac{n+1}{\rho}\right)^i \nonumber \\
                                              &\ge \rho^{m(n+1)+1}\left[\left(1-\dfrac{1}{\rho}\right)\ldots \left(1-\dfrac{n+1}{\rho}\right)\right]^{m}. \label{ineq 1}
\end{align}
Since
$$\left(1-\dfrac{1}{\rho}\right)\ldots \left(1-\dfrac{n+1}{\rho}\right)=\left[\left(1+\dfrac{1}{\rho-1}\right)\ldots \left(1+\dfrac{n+1}{\rho-n-1}\right)\right]^{-1},$$
and
\begin{align*}
\left(1+\dfrac{1}{\rho-1}\right)\ldots \left(1+\dfrac{n+1}{\rho-n-1}\right)&\le {\rm{exp}}\left(\sum_{\lambda=1}^{n+1}\dfrac{\lambda}{\rho-\lambda}\right)
                                                                                                  \le {\rm{exp}}\left(\sum_{\lambda=1}^{n+1}\dfrac{\lambda}{\rho-n-1}\right) \\
                                                                                                  &= {\rm{exp}}\left(\dfrac{(n+1)(n+2)}{2(\rho-n-1)}\right) \le  {\rm{exp}}\left(\dfrac{(n+1)(n+2)}{\rho}\right),
\end{align*}
we have 
$$\left|(\prod_{h=0}^{n}(x-h))^m(x-n-1)^i\right|\ge \rho^{m(n+1)+1}  {\rm{exp}}\left(-\dfrac{m(n+1)(n+2)}{\rho}\right).$$
By $(\ref{integ rep R i n+1})$ and the above inequality, we obtain
\begin{align} \label{upper jyouyo 1}
|R_{i,n+1}(\alpha)|\le {\rm{exp}}\left(\rho|{\rm{log}}(1+\alpha)|+\dfrac{m(n+1)(n+2)}{\rho}\right) \rho^{-m(n+1)} \ \ \text{for} \ 1 \le i \le m. 
\end{align}
Put $f(x)=x|{\rm{log}}(1+\alpha)|+\dfrac{m(n+1)(n+2)}{x}-m(n+1){\rm{log}}(x)$ for $x>0$. Then $f(x)$ takes the minimal value at 
$$x=\dfrac{ m(n+1)+\sqrt{m^2(n+1)^2+4m(n+1)(n+2)|{\rm{log}}(1+\alpha)|}}{2|{\rm{log}}(1+\alpha)|}.$$
Since $n+2<m(n+1)$, we take $\rho=\dfrac{ m(n+1)(1+\sqrt{1+4|{\rm{log}}(1+\alpha)|})}{2|{\rm{log}}(1+\alpha)|}$.
Note that, by the assumption $2\le m/|{\rm{log}}(1+\alpha)|$, we have $2(n+1)\le \rho$.
By $(\ref{upper jyouyo 1})$, we obtain the desire inequality. This completes the proof of Lemma $\ref{uekara jyouyokou}$.
\end{proof}
Next, we give a $p$-adic version of Lemma $\ref{uekara jyouyokou}$.
\begin{lemma} \label{p upper bound jyouyo}
Let $\alpha\in \C_p$ satisfying $|\alpha|_p<1$. 
Then we have 
\begin{align} \label{upper bound p}
\max_{0\le i \le m-1}|d^m_{n+1}(n+1)!^m(m-1)!R_{i,n+1,p}(\alpha)|_p\le (m(n+1)+m-2)^{m-1}|\alpha|^{m(n+1)-1}_p,
\end{align}
for any natural number $n$ satisfying $1/{\rm{log}}|\alpha|^{-1}_p+1/m\le n.$
\end{lemma}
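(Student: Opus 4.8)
The plan is to read off the order of vanishing of $R_{i,n+1}(z)$ at the origin from the Padé property, to control the $p$-adic denominators of its Taylor coefficients, and then to exploit that $|\alpha|_p<1$ forces the tail of the resulting power series to be small. First, $R_{i,n+1}(z)=\sum_{j=0}^{m-1}A_{i,j,n+1}(z){\rm{log}}^j(1+z)$ is a weight $\bold{n}_i$ Padé approximation of $(1,{\rm{log}}(1+z),\ldots,{\rm{log}}^{m-1}(1+z))$ (Proposition $\ref{pade log}$), so by condition ${\rm{(iii)}}$ of Lemma $\ref{Pade}$ it vanishes at $z=0$ to order at least $N_i-1$, where $N_i:=\sum_{j=0}^{m-1}((\bold{n}_i)_j+1)=m(n+1)+i$; in particular $m(n+1)-1\le N_i-1\le m(n+1)+m-2$ for the indices occurring in the statement. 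Let $C_k\in\Q$ denote the coefficient of $z^k$ in $d^m_{n+1}(n+1)!^m(m-1)!R_{i,n+1}(z)=\sum_{k\ge N_i-1}C_kz^k$. Since $|\alpha|_p<1$ the series ${\rm{log}}_p(1+\alpha)$ converges and the formal identity persists under the substitution $z=\alpha$, so $d^m_{n+1}(n+1)!^m(m-1)!R_{i,n+1,p}(\alpha)=\sum_{j=0}^{m-1}d^m_{n+1}(n+1)!^m(m-1)!A_{i,j,n+1}(\alpha){\rm{log}}^j_p(1+\alpha)=\sum_{k\ge N_i-1}C_k\alpha^k$; by the ultrametric inequality it then suffices to prove $\sup_{k\ge N_i-1}|C_k|_p\,|\alpha|_p^k\le(m(n+1)+m-2)^{m-1}|\alpha|_p^{m(n+1)-1}$.

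Next I would estimate $|C_k|_p$. By Lemma $\ref{denominator}$ we have $d^m_{n+1}(n+1)!^m a_{h,j}(\bold{m}_i,\boldsymbol{\omega})\in\Z$, and since $(m-1)!/j!\in\Z$ for $0\le j\le m-1$, each polynomial $d^m_{n+1}(n+1)!^m(m-1)!A_{i,j,n+1}(z)$ lies in $\Z[z]$. The coefficient of $z^r$ in ${\rm{log}}^j(1+z)$ equals $\sum_{l_1+\cdots+l_j=r}\prod_{s=1}^j\tfrac{(-1)^{l_s+1}}{l_s}$, and since each $l_s\le r$ divides $d_r:={\rm{l.c.m.}}(1,\ldots,r)$, its denominator divides $d_r^j$. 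Multiplying the integer polynomial $d^m_{n+1}(n+1)!^m(m-1)!A_{i,j,n+1}(z)$ by ${\rm{log}}^j(1+z)$, summing over $0\le j\le m-1$, and using $d_r\mid d_k$ for $r\le k$, we find that $C_k$ has denominator dividing $d_k^{m-1}$. Hence $|C_k|_p\le|d_k|_p^{-(m-1)}\le k^{m-1}$, because the exact power of $p$ dividing $d_k$ is at most $k$, i.e. $|d_k|_p\ge k^{-1}$.

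Finally I would analyze the real function $\varphi(k):=k^{m-1}|\alpha|_p^k$ on $k\ge N_i-1$. Writing ${\rm{log}}\,\varphi(k)=(m-1){\rm{log}}\,k+k\,{\rm{log}}|\alpha|_p$, its derivative is $\tfrac{m-1}{k}-{\rm{log}}|\alpha|_p^{-1}$, which is nonpositive as soon as $k\ge m(n+1)-1$: the hypothesis $\tfrac{1}{{\rm{log}}|\alpha|_p^{-1}}+\tfrac1m\le n$ rearranges to $(mn-1){\rm{log}}|\alpha|_p^{-1}\ge m$, whence $(m(n+1)-1){\rm{log}}|\alpha|_p^{-1}\ge(mn-1){\rm{log}}|\alpha|_p^{-1}\ge m\ge m-1$. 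Since $N_i-1\ge m(n+1)-1$, the function $\varphi$ is nonincreasing on $\{k\ge N_i-1\}$, so its supremum there is $\varphi(N_i-1)=(N_i-1)^{m-1}|\alpha|_p^{N_i-1}$. Bounding the polynomial factor via $N_i-1\le m(n+1)+m-2$ and the power of $|\alpha|_p$ via $N_i-1\ge m(n+1)-1$ together with $|\alpha|_p<1$ yields exactly the asserted inequality. I expect the only point requiring real care to be the denominator bookkeeping in the middle step — confirming that the powers of the logarithm contribute no worse than $d_k^{m-1}$ — together with the routine check that the numerical hypothesis on $n$ is precisely what makes $\varphi$ monotone where needed; neither seems to pose a genuine obstacle.
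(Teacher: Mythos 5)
Your argument follows the paper's proof essentially step for step: read off ${\rm ord}\,R_{i,n+1}=m(n+1)+i-1$ from the Pad\'e property, bound the $p$-adic size of the $k$-th coefficient by $k^{m-1}$ via Lemma \ref{denominator} and the denominators of ${\rm log}^j(1+z)$, and then take the ultrametric supremum, which the hypothesis on $n$ forces to occur at the lowest-order term. Your write-up is in fact slightly more careful than the paper's at the two points it glosses over (the divisibility by $d_k^{m-1}$ rather than a literal bound on ${\rm den}(r_{i,k,n+1})$, and the monotonicity of $k^{m-1}|\alpha|_p^k$), but the approach is the same.
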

\begin{proof}
Since $R_{i,n+1}(z)$ is a wight $\bold{n}_i$ Pad\'{e} approximation of $(1,{\rm{log}}(1+z), \ldots, {\rm{log}}^{m-1}(1+z))$, we have ${\rm{ord}}R_{i,n+1}(z)=m(n+1)+i-1$.
Put $E_{n+1}:=d^m_{n+1}(n+1)!^m(m-1)!$ and $$E_{n+1} R_{i,n+1}(z)=\sum_{k=m(n+1)+i-1}r_{i,k,n+1}z^k\in \Q[[z]].$$ 
First we prove the inequalities
\begin{align} \label{abs val coeff R p}
|r_{i,k,n+1}|_p\le k^{m-1} \ \text{for} \ 1\le i \le m, m(n+1)+i-1\le k .
\end{align}
By Lemma $\ref{denominator}$, we have $E_{n+1}P_{i,j,n+1}(z)\in \Z[z]$ for $1\le i \le m$, $0\le j \le m-1$. Using the equality $$E_{n+1}R_{i,n+1}(z)=\sum_{j=0}^{m-1}E_{n+1}P_{i,j,n+1}(z){\rm{log}}^j(1+z)$$ and the definition of ${\rm{log}}(1+z)$, we have ${\rm{den}}(r_{i,k,n+1})\le k^{m-1}$ for $m(n+1)+i-1\le k$. Then we obtain the inequalities $(\ref{abs val coeff R p})$.
Using the inequalities $(\ref{abs val coeff R p})$, we have 
\begin{align*}
|E_{n+1}R_{i,n+1,p}(\alpha)|_p\le \max_{m(n+1)+i-1\le k}|r_{i,k,n+1}\alpha^{k}|_p\le \max_{m(n+1)+i-1\le k}k^{m-1} |\alpha^{k}|_p.
\end{align*}
Since we have $\max_{m(n+1)+i-1\le k}|r_{i,k,n+1}\alpha^{k}|_p\le (m(n+1)+i-1)^{m-1}|\alpha|^{m(n+1)+i-1}_p$
for any natural number $n$ satisfying $1/{\rm{log}}|\alpha|^{-1}_p+1/m\le n$, we have the desire inequalities. 
This completes the proof of Lemma $\ref{p upper bound jyouyo}.$
\end{proof}
\section{Proof of main theorems}
In this section, we give the proofs of Theorem $\ref{power of log indep}$ and Theorem $\ref{p power of log indep}$.
\subsection{Non-vanishing of certain determinants}
\begin{lemma}  $($cf. {\rm{\cite[Theorem $1.2.3$]{J}}} $)$ \label{non vanishing of det}
Let $K$ be a field with characteristic $0$ and $\bold{f}=(1,f_1,\ldots,f_{m-1})\in K[[z]]^m$.
Let $\bold{n}=(n_1,\ldots,n_m)\in \Z^{m}_{\ge 0}$. Put $\bold{n}_i=(n_1+1,n_2+1,\ldots,n_i+1,n_{i+1},\ldots,n_m) \ \text{for} \ 1\le i \le m.$
Let $(A_{i,1}(z),\ldots,A_{i,m}(z))\in K[z]^m$ be a weight $\bold{n}_i$ Pad\'{e} approximants of $\bold{f}$. We define a polynomial $\Delta(z)$ by 
\begin{equation} \label{det}
                     \Delta(z)={\begin{pmatrix}
                     A_{1,1}(z)& A_{1,2}(z) & \dots &A_{1,m}(z)\\
                     A_{2,1}(z)& A_{2,2}(z) & \dots &A_{2,m}(z)\\
                     \vdots & \vdots & \ddots  &\vdots \\
                     A_{m,1}(z)& A_{m,2}(z) & \dots &A_{m,m}(z)\\
                     \end{pmatrix}}. 
                     \end{equation}
Then there exists $\gamma\in K$ satisfying 
\begin{align} \label{calculation of det}
\Delta(z)=\gamma z^N,
\end{align} 
where $N=\sum_{j=1}^m (n_j+1)$.
Moreover, if the set of indices $\{\bold{n}\}\cup \{\bold{n}_i\}_{1\le i \le m-1}$ are normal with respect to $\bold{f}$, we have $\Delta(z)\neq 0$, i.e. $\gamma\neq 0$.
\end{lemma}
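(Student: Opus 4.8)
The plan is to prove the two assertions of the lemma separately: for $(\ref{calculation of det})$ I would squeeze $\Delta(z)$ between a degree bound from above and an order bound from below, and for the non-vanishing I would extract the coefficient of $z^N$ explicitly and feed in Lemma~\ref{cor fund Pade}.

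\emph{Degree bound.} Write $\bold f=(g_1,\dots,g_m)$ with $g_1=1$. The $j$-th coordinate of $\bold n_i$ is $n_j+1$ for $j\le i$ and $n_j$ for $j>i$, so condition (ii) of Lemma~\ref{Pade} gives $\deg A_{i,j}(z)\le n_j+1$ when $j\le i$ and $\deg A_{i,j}(z)\le n_j$ when $j>i$. Expanding $\Delta(z)=\sum_{\sigma\in\mathfrak S_m}\operatorname{sgn}(\sigma)\prod_{i=1}^m A_{i,\sigma(i)}(z)$, the term for $\sigma$ has degree at most $\sum_{j=1}^m n_j+\#\{i:\sigma(i)\le i\}$. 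A one-line induction ($\sigma(1)\le 1\Rightarrow\sigma(1)=1$, then $\sigma(2)\le 2$ and $\sigma(2)\ne 1\Rightarrow\sigma(2)=2$, and so on) shows $\#\{i:\sigma(i)\le i\}=m$ forces $\sigma=\mathrm{id}$; hence $\deg\Delta(z)\le N$, and only the identity term can contribute to the coefficient of $z^N$.

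\emph{Order bound.} Set $R_i(z):=\sum_{j=1}^m A_{i,j}(z)g_j(z)$; since $(A_{i,1},\dots,A_{i,m})$ is a weight $\bold n_i$ Pad\'e approximants of $\bold f$, condition (iii) gives $\operatorname{ord}R_i(z)\ge\sum_{j=1}^m\big((\bold n_i)_j+1\big)-1=N+i-1\ge N$. Because $g_1=1$, adding to the first column of the matrix in $(\ref{det})$ the sum over $j\ge 2$ of $g_j(z)$ times its $j$-th column is an elementary column operation that leaves $\Delta(z)$ fixed and turns the first column into ${}^t\!\big(R_1(z),\dots,R_m(z)\big)$; expanding along it gives $\Delta(z)=\sum_{i=1}^m(-1)^{i+1}R_i(z)M_i(z)$ with $M_i(z)\in K[z]$, so $\operatorname{ord}\Delta(z)\ge N$. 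A polynomial of degree $\le N$ and order $\ge N$ equals $\gamma z^N$ for some $\gamma\in K$, which is $(\ref{calculation of det})$.

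\emph{Non-vanishing.} Assume $\bold n=:\bold n_0,\bold n_1,\dots,\bold n_{m-1}$ are all normal with respect to $\bold f$. For each $i$ the vector $\bold n_i$ is obtained from $\bold n_{i-1}$ by raising its $i$-th coordinate --- which equals $n_i$ --- by one, so Lemma~\ref{cor fund Pade} applied with $\bold n_{i-1}$ in the role of its ``$\bold n$'' yields $\deg A_{i,i}(z)=n_i+1$. Combined with the degree analysis above, the coefficient of $z^N$ in $\Delta(z)$, namely $\gamma$, equals the product $c_1\cdots c_m$ of the leading coefficients $c_i$ of $A_{i,i}(z)$, a product of nonzero elements of $K$; hence $\gamma\ne 0$ and $\Delta(z)\ne 0$. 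I expect the only point needing real care to be this last bit of bookkeeping --- confirming that the staircase indices differ by a single bump in exactly the coordinate that makes Lemma~\ref{cor fund Pade} pin down $\deg A_{i,i}(z)$ --- everything else being a routine degree/order squeeze together with the elementary permutation count.
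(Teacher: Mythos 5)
Your proof is correct and follows essentially the same route as the paper: the same column operation turning the first column into the remainders $R_i(z)$ for the lower bound ${\rm ord}\,\Delta(z)\ge N$, the degree bound coming from the coordinates of $\bold{n}_i$, and Lemma~\ref{cor fund Pade} applied along the staircase $\bold{n}=\bold{n}_0,\bold{n}_1,\dots,\bold{n}_{m-1}$ to get $\deg A_{i,i}(z)=n_i+1$ and hence $\gamma\neq 0$. Your permutation bookkeeping, which shows only the identity term can contribute to the coefficient of $z^N$ and identifies $\gamma$ with the product of the leading coefficients of the $A_{i,i}(z)$, is in fact more explicit than the paper's, which asserts the degree bound and the final equality ${\rm ord}\,\Delta(z)=N$ without spelling out that count.
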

\begin{proof}
Denote the formal power series $A_{i,1}(z)+ A_{i,2}(z)f_1(z)+\dots+A_{i,m}(z)f_{m-1}(z)$ by $R_i(z)$ for $1\le i \le m$. Note that we have 
\begin{align} \label{order lower bound}
{\rm{ord}}R_i(z)\ge N+i-1 \ \text{for} \ 1\le i \le m.
\end{align}
By adding the $i$-th column of the matrix $(\ref{det})$ multiplied by $f_{i-1}(z)$ to the first column of the matrix $(\ref{det})$ for all $2\le i \le m$, we obtain the following equality:
\begin{align} \label{equal determ 1}
\Delta(z)=
                      \mathrm{det}{\begin{pmatrix}
                     R_1(z)& A_{1,2}(z) &\dots & A_{1,m}(z)\\
                     R_2(z)& A_{2,2}(z) & \dots & A_{2,m}(z)\\
                     \vdots & \vdots & \ddots & \vdots\\
                     R_m(z)& A_{m,2}(z) & \dots & A_{m,m}(z)\\
                     \end{pmatrix}}.                    
                     \end{align} 
For $1\le t,u\le m$, we denote the $(t,u)$-th cofactor of the matrix in $(\ref{equal determ 1})$ by $\Delta_{t,u}(z)$. Then, we obtain 
\begin{align} \label{decomp det} 
\Delta(z)=\displaystyle\sum_{t=1}^{m}R_{t}(z)\Delta_{t,1}(z).
\end{align}
Using the inequalities $(\ref{order lower bound})$ and the equality $(\ref{decomp det})$, we have 
\begin{align} \label{order lower bound delta}
{\rm{ord}}\Delta(z)\ge N.
\end{align}
On the other hand, by using the equality $(\ref{det})$, we obtain 
\begin{align} \label {upper bound det}
{\rm{ord}}\Delta(z)\le N.
\end{align} 
Combining the inequalities $(\ref{order lower bound delta})$ and $(\ref{upper bound det})$, we obtain the equality $(\ref{calculation of det})$.
If the set of indicies $\{\bold{n}\}\cup \{\bold{n}_i\}_{1\le i \le m}$ are normal with respect to $\bold{f}$ then, using Lemma $\ref{cor fund Pade}$, we have ${\rm{deg}}A_{i,i}(z)=n_i+1$ for $1\le i \le m$.
Then we have 
\begin{align} \label {upper bound det 2}
{\rm{ord}}\Delta(z)=N.
\end{align}
This completes the proof of Lemma $\ref{non vanishing of det}$.
\end{proof}
Using Lemma $\ref{non vanishing of det}$ for $\bold{f}:=(1,{\rm{log}}(1+z),\ldots,{\rm{log}}^{m-1}(1+z))$ and $\bold{n}=(n,\ldots,n)\in \N^m$, we obtain the following corollary.
\begin{corollary} \label{key corollary}
Let $\{A_{i,j,n+1}(z)\}_{1\le i \le m,0\le j \le m-1}\subset \Q[z]$ be the set of polynomials defined in $(\ref{coeff polynomial})$. Put $\Delta^{(n+1)}(z):={\rm{det}}(A_{i,j,n+1}(z))_{1\le i \le m,0\le j \le m-1}.$
Then there exists some $\gamma\in \Q\setminus\{0\}$ satisfying 
\begin{align} \label{non vanish}
\Delta^{(n+1)}(z)=\gamma z^{(n+1)m}.
\end{align} 
Especially, we have 
\begin{align} \label{non vanish value}
\Delta^{(n+1)}(\alpha)\neq 0 \ \text{for all} \ \alpha\in \overline{\Q} \setminus\{0\}. 
\end{align}
\end{corollary}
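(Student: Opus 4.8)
The plan is to derive this directly from Lemma \ref{non vanishing of det}, specialized to $\bold{f} := (1, {\rm{log}}(1+z), \ldots, {\rm{log}}^{m-1}(1+z))$ and the constant index $\bold{n} := (n, \ldots, n) \in \N^m$. With this choice the shifted indices $\bold{n}_i = (\overbrace{n+1, \ldots, n+1}^{i}, n, \ldots, n)$ appearing in Lemma \ref{non vanishing of det} coincide with the indices $\bold{n}_i$ fixed at the start of Section $6$, and by Proposition \ref{pade log} (applied with $\bold{m} = \bold{m}_i$) the polynomials $A_{i,j,n+1}(z)$ of $(\ref{coeff polynomial})$ are precisely the components of a weight $\bold{n}_i$ Pad\'{e} approximant of $\bold{f}$, the column index $j$ running over $0, \ldots, m-1$ rather than $1, \ldots, m$. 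Hence, up to this harmless relabeling of columns, $\Delta^{(n+1)}(z)$ is exactly the determinant $\Delta(z)$ of Lemma \ref{non vanishing of det}.

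First I would observe that all indices involved lie in $\mathcal{X}_m$: the tuple $\bold{n} = (n, \ldots, n)$ is non-increasing, and so is each $\bold{n}_i$. By Proposition \ref{diagonal normality log}, every index in $\mathcal{X}_m$ is normal with respect to $\bold{f}$, so $\{\bold{n}\} \cup \{\bold{n}_i\}_{1 \le i \le m}$ is a family of normal indices. Lemma \ref{non vanishing of det} then supplies $\gamma \in \Q$ with $\Delta^{(n+1)}(z) = \gamma z^{N}$, where $N = \sum_{j=1}^{m}(n_j+1) = m(n+1)$, and its ``moreover'' clause — which rests on Lemma \ref{cor fund Pade} giving ${\rm{deg}}\,A_{i,i,n+1}(z) = n+1$ for the diagonal entries, hence ${\rm{ord}}\,\Delta^{(n+1)}(z) = N$ and not more — forces $\gamma \neq 0$. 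This is $(\ref{non vanish})$.

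It then only remains to evaluate: for any $\alpha \in \overline{\Q} \setminus \{0\}$ we get $\Delta^{(n+1)}(\alpha) = \gamma \alpha^{m(n+1)} \neq 0$, which is $(\ref{non vanish value})$.

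I do not anticipate a genuine obstacle here, since the substantive work is already done in Sections $5$ and $6$. The only point requiring care is the bookkeeping: checking that the $A_{i,j,n+1}(z)$ really are weight-$\bold{n}_i$ Pad\'{e} approximants of $\bold{f}$ so that Lemma \ref{non vanishing of det} applies verbatim, and that the constant index together with all its relevant shifts stays inside the normal cone $\mathcal{X}_m$ — both immediate from the constructions already established.
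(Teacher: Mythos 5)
Your proposal is correct and matches the paper's own (essentially one-line) derivation: the paper likewise obtains the corollary by applying Lemma \ref{non vanishing of det} to $\bold{f}=(1,\log(1+z),\ldots,\log^{m-1}(1+z))$ with $\bold{n}=(n,\ldots,n)$, the normality of all the indices involved being guaranteed by Proposition \ref{diagonal normality log}. Your extra bookkeeping (the column relabeling $j\mapsto j+1$, the identification of the shifted indices with the $\bold{n}_i$ of Section 5, and the appeal to Proposition \ref{pade log} for the degree bounds) only makes explicit what the paper leaves implicit.
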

\subsection{Proof of Theorem $\ref{power of log indep}$}
Before starting to prove Theorem $\ref{power of log indep}$, we introduce a sufficient condition to obtain a lower bound of linear forms of complex numbers with integer coefficients.
For $\boldsymbol{\beta}:=(\beta_0,\ldots,\beta_{m})\in K^{m+1}\setminus\{\bold{0}\}$ and $\theta_0,\theta_1,\ldots,\theta_m\in \C$,
we denote $\sum_{i=0}^{m}\beta_i\theta_i$ by $\Lambda(\boldsymbol{\beta},\boldsymbol{\theta})$.
\begin{proposition} \label{critere}
Let $K$ be an algebraic number field and fix an embedding of $\sigma:K\hookrightarrow \C$. We denote the completion of $K$ by the fixed embedding $\sigma$ by $K_{\infty}$.
Let $m\in\N$ and $\theta_0:=1,\theta_1,\ldots,\theta_{m}\in \C^{*}$.
Suppose there exist a set of matrices 
$$\{(a_{i,j,n})_{0\le i,j \le m}\}_{n\in \N} \subset {\rm{GL}}_{m+1}(K)\cap {\rm{M}}_{m+1}(\mathcal{O}_K),$$
positive real numbers
\begin{align*}
&\{\mathcal{A}^{(k)}\}_{1\le k \le [K:\Q]}, \{c^{(k)}_{i}\}_{\substack{0\le i \le m \\ 1\le k \le [K:\Q]}}, \{T^{(k)}\}_{1\le k \le [K:\Q]},A,c,T,N
\end{align*}
and a function $f:\N\longrightarrow \R_{\ge0}$
satisfying 
\begin{align} 
&c^{(k)}_{0}\le \ldots \le c^{(k)}_{m} \ \text{for} \ 1\le k \le [K:\Q], \nonumber\\
&f(n)=o(n) \ \ (n\to \infty), \label{fn}
\end{align}
and 
\begin{align*}
&\max_{0\le j \le m}|a^{(k)}_{i,j,n}|\le T^{(k)}n^{c^{(k)}_{i}}e^{\mathcal{A}^{(k)}n+f(n)} \ \text{for} \ 0\le i \le m \ \text{and} \ 1\le k \le [K:\Q],\\
&\max_{0\le j \le m} |\sum_{i=0}^ma_{i,j,n}\theta_i|\le Tn^{c}e^{-A n+f(n)},
\end{align*}
for $n\ge N$. 
Put 
\begin{align*}
&\delta:=A+\mathcal{A}^{(1)}-\dfrac{m\sum_{k=1}^{[K:\Q]}\mathcal{A}^{(k)} }{[K_{\infty}:\R]}\\
&\nu:=A+\mathcal{A}^{(1)}.
\end{align*}
Suppose $\delta>0$, then the numbers $\theta_0,\ldots,\theta_{m}$ are linearly independent over $K$ and, for any $\epsilon>0$, 
there exists a constant $H_0$ depending on $\epsilon$ and the given data such that the following property holds.
For any $\boldsymbol{\beta}:=(\beta_0,\ldots,\beta_m) \in \mathcal{O}^{m+1}_K \setminus \{ \bold{0} \}$ satisfying $\mathrm{H}(\boldsymbol{\beta})\ge H_0$, then we have
\begin{align}
|\Lambda(\boldsymbol{\beta},\boldsymbol{\theta})|>{\mathrm{H}(\boldsymbol{\beta})}^{-\tfrac{[K:\Q]\nu}{[K_{\infty}:\R]\delta}-\epsilon}.
\end{align}
\end{proposition}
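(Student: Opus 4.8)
The strategy is the standard "linear independence criterion via auxiliary approximating forms," adapted to the number field $K$ with the given growth data. First I would fix $\boldsymbol{\beta}=(\beta_0,\ldots,\beta_m)\in\mathcal{O}_K^{m+1}\setminus\{\mathbf 0\}$ and form, for each $n$, the $m+1$ linear forms $L_{j,n}:=\sum_{i=0}^m a_{i,j,n}\theta_i$ together with the quantities $b_{j,n}:=\sum_{i=0}^m a_{i,j,n}\beta_i\in\mathcal O_K$. Because $(a_{i,j,n})\in\mathrm{GL}_{m+1}(K)$, the vector $(\beta_0,\ldots,\beta_m)$ is a nonzero $K$-combination of the rows, hence $(b_{0,n},\ldots,b_{m,n})\neq\mathbf 0$ for every $n$; pick an index $j=j(n)$ with $b_{j,n}\neq 0$. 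The key identity is $b_{j,n}\theta_0-\beta_0 L_{j,n}=\sum_{i=1}^m(\beta_i b_{j,n}-\beta_0 a_{i,j,n})\theta_i$, or more cleanly: from $\Lambda(\boldsymbol\beta,\boldsymbol\theta)=\sum\beta_i\theta_i$ and Cramer's rule applied to the system $\{L_{j,n}=\sum a_{i,j,n}\theta_i\}_j$, one expresses a nonzero algebraic integer combination in terms of $\Lambda$ and the small forms $L_{j,n}$. Concretely I would use that $\Lambda(\boldsymbol\beta,\boldsymbol\theta)$ and the $\theta_i$ are related by an invertible matrix, so $|\Lambda|$ is bounded below by the product-formula estimate for $b_{j,n}$ minus a controlled error coming from $\sum_i |a_{i,j,n}|\cdot|L_{j,n}|$.

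The heart is the product formula. For the nonzero algebraic integer $b_{j,n}\in\mathcal O_K$ we have $\prod_{v\in M_K^\infty}\max\{1,|b_{j,n}|_v\}\geq 1$, equivalently $\prod_{k=1}^{[K:\mathbb Q]}|b_{j,n}^{(k)}|\geq 1$ once $b_{j,n}\neq 0$ (using $\eqref{important equal}$-type reasoning and that for a nonzero algebraic integer the product of all archimedean conjugate absolute values is $\geq 1$). For each conjugate $k$, $|b_{j,n}^{(k)}|\leq (m+1)\max_i|a_{i,j,n}^{(k)}|\max_i|\beta_i^{(k)}|\leq (m+1)T^{(k)}n^{c^{(k)}_m}e^{\mathcal A^{(k)}n+f(n)}\max_i|\beta_i^{(k)}|$. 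Taking the product over $k=2,\ldots,[K:\mathbb Q]$ and over the complex place giving the factor $[K_\infty:\mathbb R]$, and isolating the distinguished embedding $k=1$ (where instead of the trivial bound I use $|b_{j,n}^{(1)}|\leq |\beta_0 L_{j,n}| + |\Lambda(\boldsymbol\beta,\boldsymbol\theta)|\cdot(\text{stuff}) + (m+1)\max_i|a^{(1)}_{i,j,n}|\max_i|\beta_i^{(1)}|\cdot(\text{using }|L_{j,n}|\text{ small}))$), the product formula rearranges to a lower bound for $|\Lambda(\boldsymbol\beta,\boldsymbol\theta)|$ of the shape
\begin{align*}
|\Lambda(\boldsymbol\beta,\boldsymbol\theta)| \gg n^{-O(1)} e^{-f(n)\cdot O(1)}\, \mathrm H(\boldsymbol\beta)^{-[K:\mathbb Q]}\, e^{(A+\mathcal A^{(1)})n}\Big/ \Big(\text{product of }T^{(k)}n^{c^{(k)}_m}e^{\mathcal A^{(k)}n}\Big)^{(m)/[K_\infty:\mathbb R]},
\end{align*}
after also using the small-form bound $|L_{j,n}|\leq (m+1)Tn^c e^{-An+f(n)}\max_i|\beta_i^{(1)}|$ to absorb the $L_{j,n}$ contribution. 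The exponent of $e^n$ collected here is exactly $\delta=A+\mathcal A^{(1)}-\frac{m\sum_k\mathcal A^{(k)}}{[K_\infty:\mathbb R]}$, and the exponent attached to $e^{(A+\mathcal A^{(1)})n}$ before dividing is $\nu$.

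The final step is to choose $n=n(\boldsymbol\beta)$ optimally as a function of $H:=\mathrm H(\boldsymbol\beta)$. One wants the dominant exponential factor $e^{\delta n}$ to slightly beat $\mathrm H(\boldsymbol\beta)^{[K:\mathbb Q]}$; choosing $n\approx \frac{[K:\mathbb Q]}{\delta}\log\mathrm H(\boldsymbol\beta)$ (rounded, and valid once $\mathrm H(\boldsymbol\beta)\geq H_0$ so that $n\geq N$) and feeding this into the displayed bound, the condition $\eqref{fn}$ that $f(n)=o(n)$ guarantees the error term $e^{O(f(n))}n^{O(1)}$ is negligible compared to any fixed $\epsilon$ loss. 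The outcome is $|\Lambda(\boldsymbol\beta,\boldsymbol\theta)|>\mathrm H(\boldsymbol\beta)^{-[K:\mathbb Q]\nu/([K_\infty:\mathbb R]\delta)-\epsilon}$, and in particular $\Lambda(\boldsymbol\beta,\boldsymbol\theta)\neq 0$, which gives $K$-linear independence of $\theta_0,\ldots,\theta_m$. The main obstacle, and the part requiring care, is the bookkeeping of the archimedean places: correctly splitting off the distinguished embedding $\sigma:K\hookrightarrow\mathbb C$ with its local degree $[K_\infty:\mathbb R]\in\{1,2\}$, handling complex-conjugate pairs so that $\eqref{important equal}$ is applied with the right multiplicities, and checking that the non-vanishing $b_{j,n}\neq 0$ (which rests on $(a_{i,j,n})\in\mathrm{GL}_{m+1}(K)$) holds for the particular index $j$ one selects. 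The rest is the optimization of $n$, which is routine once the inequality is in the form above.
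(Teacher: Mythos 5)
Your overall architecture (product formula over all archimedean places with the distinguished embedding weighted by $[K_\infty:\R]$, trivial bounds at the conjugate places, then optimizing $n\asymp \log \mathrm{H}(\boldsymbol\beta)/\delta$) is the same as the paper's, and that last optimization step is carried out correctly in spirit. But the central object you apply the product formula to is wrong, and this is a genuine gap, not bookkeeping. You take a single coordinate $b_{j,n}=\sum_i a_{i,j,n}\beta_i$ of $A^T\boldsymbol\beta$ and propose to bound $|b_{j,n}^{(1)}|$ by something of the form $|\beta_0 L_{j,n}|+|\Lambda|\cdot(\ast)+(\text{small})$. For $m\ge 2$ no such bound exists: the only identity available with a single $j$ is $b_{j,n}-\beta_0L_{j,n}=\sum_{i\ge1}a_{i,j,n}(\beta_i-\beta_0\theta_i)$, whose right-hand side involves the $m$ independent quantities $\beta_i-\beta_0\theta_i$ and cannot be rewritten in terms of $\Lambda$ and $L_{j,n}$ alone (your displayed ``key identity'' is in fact false as written). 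To eliminate $\theta_1,\dots,\theta_m$ you must use all $m+1$ forms simultaneously, i.e.\ exactly the Cramer's-rule/determinant step you mention in passing. The paper does this by taking $\Theta_{\boldsymbol\beta,n}$, the determinant of $(a_{i,j,n})$ with one row (chosen so the result is nonzero, using invertibility) replaced by $\boldsymbol\beta$; a column operation turns one column into $(\dots,L_{j,n},\dots,\Lambda,\dots)$ and cofactor expansion gives $\Theta_{\boldsymbol\beta,n}=\Lambda\cdot\Delta+\sum_{j\ne I_n}L_{j,n}\Delta_j$. The product formula is then applied to the nonzero algebraic integer $\Theta_{\boldsymbol\beta,n}$, not to $b_{j,n}$.

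The arithmetic of your own target exponent already signals this. Your claimed conclusion carries the factor $\bigl(\prod_k T^{(k)}n^{c^{(k)}_m}e^{\mathcal A^{(k)}n}\bigr)^{m/[K_\infty:\R]}$, which is what produces $\delta=A+\mathcal A^{(1)}-\tfrac{m\sum_k\mathcal A^{(k)}}{[K_\infty:\R]}$. But a single linear form $b_{j,n}^{(k)}$ at the $k$-th conjugate is bounded by $(m+1)T^{(k)}n^{c^{(k)}_m}e^{\mathcal A^{(k)}n+f(n)}\max_i|\beta_i^{(k)}|$ — the exponential $e^{\mathcal A^{(k)}n}$ appears to the \emph{first} power, not the $m$-th. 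The $m$-th power arises only because $\Theta_{\boldsymbol\beta,n}^{(k)}$ is an $(m+1)\times(m+1)$ determinant containing $m$ rows of $a^{(k)}_{i,j,n}$'s and one row of $\beta^{(k)}_i$'s, giving $(m+1)!\max_i|\beta_i^{(k)}|\,(T^{(k)})^m n^{\sum_i c^{(k)}_i}e^{m(\mathcal A^{(k)}n+f(n))}$. So the two halves of your plan are inconsistent: the product-formula input is a linear form, while the claimed output only follows from a determinant. Replace $b_{j,n}$ by $\Theta_{\boldsymbol\beta,n}$ throughout (the non-vanishing of $\Theta_{\boldsymbol\beta,n}$ for a suitable row position follows from $\det(a_{i,j,n})\ne0$ and $\boldsymbol\beta\ne\mathbf0$), and the rest of your argument — including the use of $\prod_k\max\{1,|\beta_i^{(k)}|\}=\mathrm{H}(\boldsymbol\beta)^{[K:\Q]}$ for integral $\boldsymbol\beta$, the absorption of $e^{O(f(n))}n^{O(1)}$ into $\epsilon$, and the deduction of linear independence from $\Lambda\ne0$ — goes through as in the paper.
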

\begin{proof}
Since ${\rm{det}}(a_{i,j,n})_{0\le i,j \le m}\neq 0$ for all $n\in \N$, there exists $0\le I_n \le m$ satisfying 
\begin{equation} \label{det}
                     \Theta_{\boldsymbol{\beta},n}:={\rm{det}}
                    {\begin{pmatrix}
                     a_{0,0,n}& a_{0,1,n} & \dots &a_{0,m,n}\\
                     \vdots & \vdots & \ddots  &\vdots \\
                     \beta_0& \beta_1 & \dots & \beta_m\\
                     \vdots & \vdots & \ddots  &\vdots \\
                     a_{m,0,n} & a_{m,1,n} & \dots & a_{m,m,n}\\
                     \end{pmatrix}}\neq 0,
                     \end{equation}
where the vector $(\beta_0,\ldots,\beta_m)$ is placed in the $I_n$-th line of the matrix in the definition of $\Theta_{\boldsymbol{\beta},n}$.
Then by the product formula, we have
\begin{align} \label{upper infty}
1\le  |\Theta^{(1)}_{\boldsymbol{\beta},n}|^{[K_{\infty}:\R]} \times {\prod_{k}}^{\prime} |\Theta^{(k)}_{\boldsymbol{\beta},n}|,
\end{align}
where ``$ \ {}^{\prime} \ $'' in ${\prod_{k}}^{\prime}$ means $k$ runs $2\le k \le [K:\Q]$ if $K_{\infty}=\R$ and $3\le k \le [K:\Q]$ if $K_{\infty}=\C$.
In the following, we denote the $(s,t)$-th cofactor of the matrix in the definition of $\Theta_{\boldsymbol{\beta},n}$ by  $\Theta_{\boldsymbol{\beta},n,s,t}$.
First we give an upper bound of $|\Theta^{(1)}_{\boldsymbol{\beta},n}|$.
\begin{align}
&|\Theta^{(1)}_{\boldsymbol{\beta},n}|=\left|{\rm{det}}
                    {\begin{pmatrix}
                     \sum_{i=0}^m a_{i,0,n}\theta_i & a_{0,1,n} & \dots &a_{0,m,n}\\
                     \vdots & \vdots & \ddots  &\vdots \\
                     \Lambda(\boldsymbol{\beta},\boldsymbol{\theta}) & \beta_1 & \dots & \beta_m\\
                     \vdots & \vdots & \ddots  &\vdots \\
                     \sum_{i=0}^ma_{i,m,n}\theta_i & a_{m,1,n} & \dots & a_{m,m,n}\\
                     \end{pmatrix}}\right| \label{upper iota}\\
                                &=|\sum_{\substack{1\le i \le m+1 \\ i\neq I_n}} \left(\sum_{i=0}^ma^{(1)}_{i,j,n}\theta_i \right)\Theta^{(1)}_{\boldsymbol{\beta},i,1}+\Lambda(\boldsymbol{\beta}, \boldsymbol{\theta})\Theta^{(1)}_{\boldsymbol{\beta},I_n,1}| \nonumber \\
                                &\le mTn^{c}e^{-An+f(n)} m!\max\{1,|\beta_i| \} \prod_{i=2}^{m} (T^{(1)}n^{c^{(1)}_{i}}e^{\mathcal{A}^{(1)}n+f(n)})+|\Lambda(\boldsymbol{\beta}, \boldsymbol{\theta})|m!\prod_{i=1}^{m}(T^{(1)}n^{c^{(1)}_{i}}e^{\mathcal{A}^{(1)}n+f(n)}) \nonumber\\
&=m!\prod_{i=2}^{m} (T^{(1)}n^{c^{(1)}_{i}}e^{\mathcal{A}^{(1)}n+f(n)})\left(mTn^{c}e^{-An+f(n)}\max\{1, |\beta_i|\}+|\Lambda(\boldsymbol{\beta}, \boldsymbol{\theta})|T^{(1)}n^{c^{(1)}_{1}}e^{\mathcal{A}^{(1)}n+f(n)}\right). \nonumber
\end{align}
Secondly, we give an upper bound of $|\Theta^{(k)}_{\boldsymbol{\beta},n}|$ for $2\le k \le [K:\Q]$.
\begin{align}
|\Theta^{(k)}_{\boldsymbol{\beta},n}|&=\left|{\rm{det}}
                    {\begin{pmatrix}
                     a^{(k)}_{0,0,n} & a^{(k)}_{0,1,n} & \dots & a^{(k)}_{0,m,n}\\
                     \vdots & \vdots & \ddots  &\vdots \\
                     \beta^{(k)}_0 &  \beta^{(k)}_1 & \dots &  \beta^{(k)}_m\\
                     \vdots & \vdots & \ddots  &\vdots \\
                     a^{(k)}_{m,0,n}& a^{(k)}_{m,1,n} & \dots & a^{(k)}_{m,m,n}\\
                     \end{pmatrix}}\right| \label{tau part infty} \\
                                &\le (m+1)!\max\{1,|\beta^{(k)}_i|\} \prod_{1\le i \le m}\left(T^{(k)} n^{c^{(k)}_{i}}e^{\mathcal{A}^{(k)}n+f(n)}\right) \nonumber\\
                                &= (m+1)!\max\{1,|\beta^{(k)}_i|\} (T^{(k)})^m n^{\sum_{1\le i \le m}c^{(k)}_{i}}e^{m(\mathcal{A}^{(k)}n+f(n))}. \nonumber
\end{align}
Substituting the inequalities $(\ref{upper iota})$ and $(\ref{tau part infty})$ to the inequality $(\ref{upper infty})$ and taking the $\tfrac{1}{[K_{\infty}:\R]}$-th power, we obtain
\begin{align} \label{conclusion 2}
1\le C_1\mathrm{H}(\boldsymbol{\beta})^{\tfrac{[K:\Q]}{[K_{\infty}:\R]}} e^{-\delta n}+C_2 \mathrm{H}(\boldsymbol{\beta})^{\tfrac{[K:\Q]}{[K_{\infty}:\R]}}|\Lambda(\boldsymbol{\beta},\boldsymbol{\theta})|e^{(\nu-\delta)n},
\end{align}
where 
\begin{align*}
&C_1:= m!mTn^{c} e^{f(n)}\prod_{i=2}^{m} (T^{(1)}n^{c^{(1)}_{i}}e^{f(n)}) {\prod_k}^{\prime} \left[(m+1)! (T^{(k)})^m n^{\sum_{1\le i \le m}c^{(k)}_{i}}e^{mf(n)}\right]^{\tfrac{1}{[K_{\infty}:\R]}},\\
&C_2:=m!\prod_{i=1}^{m} (T^{(1)}n^{c^{(1)}_{i}}e^{f(n)}){\prod_{k}}^{\prime} \left[(m+1)! (T^{(k)})^m n^{\sum_{1\le i \le m}c^{(k)}_{i}}e^{mf(n)}\right]^{\tfrac{1}{[K_{\infty}:\R]}}.
\end{align*}
Let $\epsilon>0$ and $0<\tilde{\epsilon}<\delta$ satisfying 
\begin{align}\label{condition tilde mu}
\dfrac{\nu}{\delta}+\dfrac{\epsilon}{2}\ge \dfrac{\nu}{\delta-\tilde{\epsilon}}.
\end{align}
Write $\tilde{\delta}:=\delta-\tilde{\epsilon}$.
By the assumptions $\delta>0$ and  $(\ref{fn})$, there exists a natural number ${n^{*}}$ satisfying   
\begin{align}
&C_1 e^{-\delta n}\le e^{-\tilde{\delta}n}, \label{cond 1}\\
&C_2 e^{(\nu-\delta)n}\le e^{(\nu-\tilde{\delta})n}, \label{cond 2}
\end{align}
for all $n\ge n^{*}$.
Consequently, using $(\ref{conclusion 2})$, we obtain
\begin{align} \label{conclusion 3}
|\Lambda(\boldsymbol{\beta},\boldsymbol{\theta})|\ge \dfrac{1-e^{-\tilde{\delta}n}\cdot \mathrm{H}(\boldsymbol{\beta})^{\tfrac{[K:\Q]}{[K_{\infty}:\R]}}}{e^{(\nu-\tilde{\delta})n}\cdot \mathrm{H}(\boldsymbol{\beta})^{\tfrac{[K:\Q]}{[K_{\infty}:\R]}}} \ \ \text{for any} \ n\ge n^{*}.
\end{align}
Now for this fix $n^{*}$, we consider $H_0>1$ such that $e^{-\tilde{\delta}n^{*}}H^{\tfrac{[K:\Q]}{[K_{\infty}:\R]}}_0\ge \tfrac{1}{2}$. Then we have $e^{-\tilde{\delta}n^{*}}\mathrm{H}(\boldsymbol{\beta})^{\tfrac{[K:\Q]}{[K_{\infty}:\R]}}\ge \tfrac{1}{2}$ for all $\boldsymbol{\beta}\in \mathcal{O}^{m+1}_K\setminus \{\bold{0}\}$ satisfying $\mathrm{H}(\boldsymbol{\beta})\ge H_0$.
Take $\boldsymbol{\beta}\in \mathcal{O}^{m+1}_K$ satisfying $\mathrm{H}(\boldsymbol{\beta})\ge H_0$. Let $\tilde{n}=\tilde{n}(\mathrm{H}(\boldsymbol{\beta})) \in \N$ be the least positive integer satisfying  $e^{-\tilde{\delta}\tilde{n}}\mathrm{H}(\boldsymbol{\beta})^{\tfrac{[K:\Q]}{[K_{\infty}:\R]}}<\tfrac{1}{2}$. Note that $\tilde{n}> n^{*}$. Using inequality $(\ref{conclusion 3})$ for $\tilde{n}$, we have
\begin{align}
|\Lambda(\boldsymbol{\beta},\boldsymbol{\theta})|>\dfrac{\tfrac{1}{2}}{e^{(\nu-\tilde{\delta})\tilde{n}}\cdot \mathrm{H}(\boldsymbol{\beta})^{\tfrac{[K:\Q]}{[K_{\infty}:\R]}}}.
\end{align}
By the definition of $\tilde{n}$, we have $e^{-(\tilde{n}-1)\tilde{\delta}} \mathrm{H}(\boldsymbol{\beta})^{\tfrac{[K:\Q]}{[K_{\infty}:\R]}}\ge \tfrac{1}{2}$ and then $e^{\tilde{n}}\le (2\mathrm{H}(\boldsymbol{\beta})^{\tfrac{[K:\Q]}{[K_{\infty}:\R]}})^{\tfrac{1}{\tilde{\delta}}}e$.
Finally, we obtain
\begin{align*}
|\Lambda(\boldsymbol{\beta},\boldsymbol{\theta})|&>\dfrac{1}{2^{\tfrac{\nu}{\tilde{\delta}}} \cdot e^{\nu-\tilde{\delta}} \cdot \mathrm{H}(\boldsymbol{\beta})^{\tfrac{[K:\Q]\nu}{[K_{\infty}:\R]\tilde{\delta}}}}\\
&\ge \dfrac{1}{\mathrm{H}(\boldsymbol{\beta})^{\tfrac{[K:\Q]\nu}{[K_{\infty}:\R]\tilde{\delta}}}}\\
&\ge \dfrac{1}{\mathrm{H}(\boldsymbol{\beta})^{\tfrac{[K:\Q]\nu}{[K_{\infty}:\R]\delta}+\tfrac{\epsilon[K:\Q]}{2[K_{\infty}:\R]}}}.
\end{align*}
Note that the last inequality is obtained by the inequality $(\ref{condition tilde mu})$.
This completes the proof of Proposition $\ref{critere}$.
\end{proof}
\begin{remark} \label{how to calculate}
In this remark, we explain that how to take the positive number $H_0$ in Proposition  $\ref{critere}$.
Let $\epsilon>0$. 
At first we take $\tilde{\epsilon}:=\dfrac{\epsilon \delta^2}{(2\nu+\epsilon \delta)}$. Then we have
\begin{align*}
\dfrac{\nu}{\delta}+\dfrac{\epsilon}{2}=\dfrac{\nu}{\delta-\tilde{\epsilon}}.
\end{align*} 
Since $f(n)=o(n) (n\to \infty)$, there exists $\tilde{n}^{*}\in \N$ satisfying
\begin{align*}
f(n)<\dfrac{[K_{\infty}:\R]}{2m[K:\Q]}\tilde{\epsilon} n \ \text{for any} \ n\ge \tilde{n}^{*}.
\end{align*} 
We take $n^{*}\ge \tilde{n}^{*}$ satisfying 
\begin{align}
&{\rm{log}}
\left(m!mTn^{c}\prod_{i=1}^m(T^{(1)}n^{c^{(1)}_{i}})\times 
{\prod_{k}}^{\prime} \left[(m+1)!(T^{(k)})^m n^{\sum_{1\le i \le m}c^{(k)}_{i}}\right]^{\tfrac{1}{[K_{\infty}:\R]}} \right)\le \dfrac{\epsilon \delta^2n^{*}}{4(2\nu+\epsilon \delta)}, \label{important}  
\end{align}
then we have the inequalities $(\ref{cond 1})$ and $(\ref{cond 2})$ for any $n\ge n^{*}$.
At last, we take $H_0$ by 
$$H_0=\left(\dfrac{1}{2}{\rm{exp}}[{\delta n^{*}}]\right)^{\tfrac{[K_{\infty}:\R]}{[K:\Q]}}.$$
Then by the proof of Proposition  $\ref{critere}$, 
the positive number $H_0$ satisfies the following property$:$
\begin{align*}
|\Lambda(\boldsymbol{\beta},\boldsymbol{\theta})|>{\mathrm{H}(\boldsymbol{\beta})}^{-\tfrac{[K:\Q]\nu}{[K_{\infty}:\R]\delta}-\tfrac{\epsilon[K:\Q]}{2[K_{\infty}:\R]}},
\end{align*}
for any $\boldsymbol{\beta}:=(\beta_0,\ldots, \beta_m) \in \mathcal{O}^{m+1}_K \setminus \{ \bold{0} \}$ satisfying $\mathrm{H}(\boldsymbol{\beta})\ge H_0$.
\end{remark}
\begin{proof} [Proof of Theorem $\ref{power of log indep}$]
We use the same notations as in Section $5$. Let $R_{i,n+1}(z)$ be the formal power series defined in $(\ref{exp pade S})$ for $1\le i \le m$ and $n\in\N$. 
Let $K$ be an algebraic number field. We fix an element $\alpha\in K\setminus\{0,-1\}$ satisfying the assumption in Theorem $\ref{power of log indep}$. Then we have 
\begin{align*}
R_{i,n+1}(\alpha)=\sum_{j=0}^{m-1}A_{i,j,n+1}(\alpha)\log^j(1+\alpha).
\end{align*}
Put 
\begin{align}
&D_{n+1}(\alpha):=d^m_{n+1}(n+1)!^m(m-1)!{\rm{den}}^{n+1}(\alpha), \label{D_n}\\
&a_{i,j,n+1}(\alpha):=D_{n+1}(\alpha) A_{i,j,n+1}(\alpha) \ \text{for} \ 1\le i \le m, 0\le j\le m-1. \label{linear form coefficients}
\end{align}
Then by Lemma $\ref{denominator}$ and Corollary $\ref{key corollary}$, we have 
\begin{align} \label{non-vanish log}
(a_{i,j,n+1}(\alpha))_{1\le i \le m,0\le j \le m-1}\ \in {\rm{GL}}_{m}(K)\cap {\rm{M}}_m(\mathcal{O}_K) \ \text{for all} \ n\in\N.
\end{align}
Define the set of positive real numbers
\begin{align*}
&\mathcal{A}^{(k)}(\alpha)=m(1+{\rm{log}}(2))+{\rm{log}}({\rm{den}}(\alpha))+{\rm{log}}(1+|\alpha^{(k)}|) \ \text{for} \ 1\le k \le [K:\Q],\\
&c^{(k)}_{l}=2m \ \text{for} \ 0\le l \le m-1 \ \text{and} \ 1\le k \le [K:\Q],\\
&T^{(k)}(\alpha)=\dfrac{2^m(1+|\alpha^{(k)}|)(m-1)!}{|\alpha^{(k)}|} \ \text{for} \ 1\le k \le [K:\Q], \\
&A(\alpha)=\dfrac{m}{2}{\rm{log}}\left(\dfrac{m}{|{\rm{log}}(1+\alpha)|}\right)-\left(\dfrac{m(1+\sqrt{1+4|{\rm{log}}(1+\alpha)|})}{2}+\dfrac{2|{\rm{log}}(1+\alpha)|}{1+\sqrt{1+4|{\rm{log}}(1+\alpha)|}} \right)-{\rm{log}}({\rm{den}}(\alpha)), \\
&c(\alpha)=\dfrac{m}{2}, \\
&T(\alpha)={\rm{exp}}\left({\dfrac{2|{\rm{log}}(1+\alpha)|}{1+\sqrt{1+4|{\rm{log}}(1+\alpha)|}}}\right)(m-1)!.
\end{align*}
Define $g(n):=n\left[\sqrt{{\rm{log}}n}\cdot {\rm{exp}}\left(-\sqrt{({\rm{log}}n)/R}\right)\right]$ with $R:=\tfrac{515}{(\sqrt{546}-\sqrt{322})^2}$.
Note that, in \cite{R-S1}, Rosser-Schoenfeld gives an estimate of $d_n$ of the form 
\begin{align} \label{growth dn}
{\rm{exp}}(n-g(n))\le d_n \le {\rm{exp}}(n+g(n)).
\end{align}
Put $f(n):=mg(n)$. 
Then we see $f(n)=o(n) \ (n\to \infty)$.
By Lemma $\ref{keisu ookisa}$ and Lemma $\ref{uekara jyouyokou}$, we have
\begin{align*}
&\max_{\substack{1\le i \le m \\ 0\le j \le m-1}}|a^{(k)}_{i,j,n+1}(\alpha)|\le T^{(k)}(\alpha) (n+1)^{c^{(k)}_{i}}e^{\mathcal{A}^{(k)}(\alpha)(n+1)+f(n+1)} \ \text{for} \ 0\le i \le m \ \text{and} \ 1\le k \le [K:\Q],\\
&\max_{1\le i \le m} \left|\sum_{j=0}^{m-1}a_{i,j,n+1}(\alpha){\rm{log}}^j(1+\alpha)\right|\le T(\alpha)(n+1)^{c(\alpha)}e^{-A(\alpha)(n+1)+f(n+1)},
\end{align*}
for all $n\in \N$. 
We use Proposition $\ref{critere}$ for $\theta_1={\rm{log}}(1+\alpha),\ldots,\theta_{m-1}={\rm{log}}^{m-1}(1+\alpha)$ 
and the above datum 
$\{\mathcal{A}^{(k)}(\alpha)\}_{1\le k \le [K:\Q]}$, 
$\{c^{(k)}_{l}\}_{\substack{0\le l \le m-1 \\ 1\le k \le [K:\Q]}}$,
$\{T^{(k)}(\alpha)\}_{1\le k \le [K:\Q]}$,
$A(\alpha)$,
$c(\alpha)$, 
and $T(\alpha)$ then we obtain the assertion of Theorem $\ref{power of log indep}$.
\end{proof}
\subsection{Proof of Theorem $\ref{p power of log indep}$}
Before proving Theorem  $\ref{p power of log indep}$, we introduce a $p$-adic version of Proposition $\ref{critere}$.
\begin{proposition} \label{critere p}
Let $K$ be an algebraic number field and fix an embedding $\iota_p:K\longrightarrow \C_p$. 
We denote the completion of $K$ by the fixed embedding $\sigma_{p}$ by $K_{p}$.
Let $m\in\N$ and $\theta_0:=1,\theta_1,\ldots,\theta_{m-1}\in \C_p$.

Suppose, for all $n\in\N$, there exist a set of matrices 
$$\{(a_{i,j,n})_{0\le i,j \le m}\}_{n\in \N} \subset {\rm{GL}}_{m+1}(K)\cap {\rm{M}}_{m+1}(\mathcal{O}_K),$$
positive real numbers
\begin{align*}
&\{\mathcal{A}^{(k)}\}_{1\le k \le [K:\Q]}, \{c^{(k)}_{i}\}_{\substack{0\le i \le m \\ 1\le k \le [K:\Q]}}, \{T^{(k)}\}_{1\le k \le [K:\Q]},A_p,c_p,T_p,N
\end{align*}
and a function $f:\N\longrightarrow \R_{\ge0}$ satisfying 
\begin{align*}
&c^{(k)}_{0}\le \ldots \le c^{(k)}_{m} \ \text{for} \ 1\le k \le [K:\Q], \nonumber\\
&f(n)=o(n) \ \ (n\to \infty),
\end{align*}
and
\begin{align*}
&\max_{0\le j \le m}|a^{(k)}_{i,j,n}|\le T^{(k)}n^{c^{(k)}_{i}}e^{\mathcal{A}^{(k)}n+f(n)} \ \text{for} \ 0\le i \le m \ \text{and} \ 1\le k \le [K:\Q],\\
&\max_{0\le j \le m} |\sum_{i=0}^{m}a_{i,j,n}\theta_i|_p\le Tn^{c_p}e^{-A_pn},
\end{align*}
for all $n\ge N$. Put
\begin{align*}
&\delta_p:=A_p-\dfrac{m\sum_{k=1}^{[K:\Q]}\mathcal{A}^{(k)}}{[K_p:\Q_p]},\\
&\nu_p:=A_p.
\end{align*}
For $\epsilon>0$ and ${n}^{*}\in \N$ satisfying $n^{*}\ge N$ and 
\begin{align*}
&f(n)\le \dfrac{\epsilon \delta^2_p[K_{p}:\Q_p]n^{*}}{2m(2\nu_p+\epsilon \delta_p)[K:\Q]}, \\
&{\rm{log}}\left(T_{p}n^{b_p} \left[\prod_{k=1}^{[K:\Q]}(m+1)!(T^{(k)})^m n^{\sum_{1\le i \le m}c^{(k)}_{i}}\right]^{\tfrac{1}{[K_{p}:\Q_p]}}\right) \le \dfrac{\epsilon \delta^2_pn^{*}}{4 (2\nu_p+\epsilon \delta_p)},
\end{align*}
for all $n\ge n^{*}$.
Suppose $\delta_p>0$, then the numbers $\theta_0,\ldots,\theta_{m}$ are linearly independent over $K$ and the positive number
$H_0:=\left(\dfrac{1}{2}{\rm{exp}}\left[\delta_p n^{*}\right]\right)^{\tfrac{[K_p:\Q_p]}{[K:\Q]}}$
satisfies the following property$:$

For any $\boldsymbol{\beta}:=(\beta_0,\ldots, \beta_m) \in \mathcal{O}^{m+1}_K \setminus \{ \bold{0} \}$ satisfying $H_0\le \mathrm{H}(\boldsymbol{\beta})$, then we have
\begin{align*}
|\Lambda_p(\boldsymbol{\beta},\boldsymbol{\theta})|_p>\mathrm{H}(\boldsymbol{\beta})^{-\tfrac{[K:\Q]\nu_p}{[K_{p}:\Q_p]\delta_p}-\tfrac{\epsilon[K:\Q]}{2[K_{p}:\Q_p]}} .
\end{align*}
\end{proposition}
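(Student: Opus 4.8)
The plan is to mimic the proof of Proposition~\ref{critere}, replacing the archimedean absolute value $|\cdot|$ by the $p$-adic one $|\cdot|_p$ and using the product formula in the same way, but now extracting the ``small'' factor at the place $v_p$ corresponding to the embedding $\sigma_p$ rather than at the archimedean place. First I would fix $\boldsymbol{\beta}=(\beta_0,\ldots,\beta_m)\in\mathcal O_K^{m+1}\setminus\{\mathbf 0\}$ and, exactly as before, form the determinant $\Theta_{\boldsymbol{\beta},n}$ obtained by inserting the row $(\beta_0,\ldots,\beta_m)$ into the $I_n$-th row of the matrix $(a_{i,j,n})$, choosing $0\le I_n\le m$ so that $\Theta_{\boldsymbol{\beta},n}\neq 0$; this is possible since $(a_{i,j,n})\in\mathrm{GL}_{m+1}(K)$. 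The product formula applied to $\Theta_{\boldsymbol{\beta},n}\in K\setminus\{0\}$ gives
\begin{align*}
1\le |\Theta^{(v_p)}_{\boldsymbol{\beta},n}|_{v_p}^{[K_p:\Q_p]}\cdot \prod_{k}|\Theta^{(k)}_{\boldsymbol{\beta},n}|,
\end{align*}
where now the ``good'' factor is the $p$-adic one and the product over $k$ runs over all archimedean conjugates (using \eqref{important equal} to pass from normalized $v$-absolute values to the bare archimedean ones).

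Next I would estimate each factor. For the archimedean conjugates $|\Theta^{(k)}_{\boldsymbol{\beta},n}|$ the bound is the crude one from \eqref{tau part infty}: expand the determinant, bound each $|a^{(k)}_{i,j,n}|$ by $T^{(k)}n^{c^{(k)}_i}e^{\mathcal A^{(k)}n+f(n)}$ and each $|\beta^{(k)}_i|$ by $\max\{1,|\beta^{(k)}_i|\}$, yielding $(m+1)!\max\{1,|\beta^{(k)}_i|\}(T^{(k)})^m n^{\sum_i c^{(k)}_i}e^{m(\mathcal A^{(k)}n+f(n))}$. For the $p$-adic factor I would perform the same column operation as in \eqref{equal determ 1}: add $\theta_{i-1}$ times the $i$-th column to the first column, turning the first column into the entries $\sum_i a_{i,j,n}\theta_i$ (which are $p$-adically small, bounded by $Tn^{c_p}e^{-A_pn}$) except in the inserted row, where it becomes $\Lambda_p(\boldsymbol{\beta},\boldsymbol{\theta})$. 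Expanding along that first column and using the ultrametric inequality (no factorials appear in the $p$-adic size of a cofactor, since $|\cdot|_p\le 1$ on $\mathcal O_K$, though one keeps them harmlessly) gives
\begin{align*}
|\Theta^{(v_p)}_{\boldsymbol{\beta},n}|_p\le \max\Bigl\{Tn^{c_p}e^{-A_pn},\ |\Lambda_p(\boldsymbol{\beta},\boldsymbol{\theta})|_p\Bigr\}.
\end{align*}

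Then I would combine these: substituting into the product-formula inequality and taking $[K_p:\Q_p]$-th roots gives, after collecting the constants as in the archimedean case,
\begin{align*}
1\le C_1\,\mathrm H(\boldsymbol{\beta})^{[K:\Q]/[K_p:\Q_p]}e^{-\delta_p n}+C_2\,\mathrm H(\boldsymbol{\beta})^{[K:\Q]/[K_p:\Q_p]}|\Lambda_p(\boldsymbol{\beta},\boldsymbol{\theta})|_p\,e^{(\nu_p-\delta_p)n},
\end{align*}
with $C_1,C_2$ the explicit products of the $T^{(k)}$, $n^{c^{(k)}_i}$, $e^{f(n)}$ and $T_p n^{c_p}$ raised to the appropriate fractional powers. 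From here the endgame is identical to Proposition~\ref{critere} and Remark~\ref{how to calculate}: choose $\tilde\epsilon=\epsilon\delta_p^2/(2\nu_p+\epsilon\delta_p)$ so that $\nu_p/\delta_p+\epsilon/2=\nu_p/(\delta_p-\tilde\epsilon)$, use $f(n)=o(n)$ together with the stated hypothesis on $n^{*}$ to guarantee $C_1e^{-\delta_p n}\le e^{-\tilde\delta_p n}$ and $C_2e^{(\nu_p-\delta_p)n}\le e^{(\nu_p-\tilde\delta_p)n}$ for $n\ge n^{*}$ (here $\tilde\delta_p:=\delta_p-\tilde\epsilon$), and then for any $\boldsymbol{\beta}$ with $\mathrm H(\boldsymbol{\beta})\ge H_0$ pick the least $\tilde n$ with $e^{-\tilde\delta_p\tilde n}\mathrm H(\boldsymbol{\beta})^{[K:\Q]/[K_p:\Q_p]}<\tfrac12$; the minimality gives $e^{\tilde n}\le (2\mathrm H(\boldsymbol{\beta})^{[K:\Q]/[K_p:\Q_p]})^{1/\tilde\delta_p}e$, and feeding $\tilde n$ back into the displayed inequality yields the claimed lower bound $|\Lambda_p(\boldsymbol{\beta},\boldsymbol{\theta})|_p>\mathrm H(\boldsymbol{\beta})^{-[K:\Q]\nu_p/([K_p:\Q_p]\delta_p)-\epsilon[K:\Q]/(2[K_p:\Q_p])}$, with linear independence following from the case where the right-hand side would otherwise force $\Lambda_p=0$ on an unbounded sequence of heights. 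The only genuinely $p$-adic point — and hence the one deserving care — is the cofactor expansion of $\Theta^{(v_p)}_{\boldsymbol{\beta},n}$: one must check that replacing the triangle inequality by the ultrametric inequality does not cost any factor growing with $n$, so that the exponent $-A_p$ is preserved; everything else is a transcription of the archimedean argument with $\R$ replaced by $\Q_p$ and $K_\infty$ by $K_p$. I expect that bookkeeping of constants, not any conceptual difficulty, to be the main obstacle.
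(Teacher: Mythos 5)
Your proposal is correct and is exactly the argument the paper intends: the paper itself omits the proof of Proposition \ref{critere p}, stating that it follows by the same argument as Proposition \ref{critere}, and your transcription (product formula with the small factor extracted at the place $v_p$, crude archimedean bounds at all conjugates combined via $(\ref{important equal})$, the column operation plus the ultrametric inequality to bound the $p$-adic factor by $\max\{T_pn^{c_p}e^{-A_pn},|\Lambda_p|_p\}$, then the identical endgame) is faithful and sound. The one genuinely $p$-adic point you flag — that the cofactors lie in $\mathcal{O}_K$ so their $p$-adic absolute values are at most $1$ and no $n$-dependent loss occurs — is handled correctly.
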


Since Proposition $\ref{critere p}$ can be proved by the same argument of that of Proposition $\ref{critere}$, we omit the proof.

\bigskip

\begin{proof} [Proof of Theorem $\ref{p power of log indep}$]
We use the same notations as in the proof of Theorem $\ref{power of log indep}$.  Let $K$ be an algebraic number field. We fix an element $\alpha\in K\setminus\{0,-1\}$ satisfying the assumption in Theorem $\ref{p power of log indep}$. 
Put 
\begin{align*}
&T_p(\alpha)=\dfrac{(2m)^{m-1}}{|\alpha|_p},\\
&c_p=m-1,\\
&A_p(\alpha)=-m{\rm{log}}(|\alpha|_p).
\end{align*}
By Lemma $\ref{keisu ookisa}$ and Lemma $\ref{p upper bound jyouyo}$, we obtain 
\begin{align*}
&\max_{\substack{1\le i \le m \\ 0\le j \le m-1}}|a^{(k)}_{i,j,n+1}(\alpha)|\le T^{(k)}(\alpha) (n+1)^{c^{(k)}_{i}}e^{\mathcal{A}^{(k)}(\alpha)(n+1)+f(n+1)},\\ 
&\max_{1\le i \le m} \left|\sum_{j=0}^{m-1} a_{i,j,n+1}(\alpha)\log^{j}_p(1+\alpha))\right|_p\le T_p(\alpha) (n+1)^{c_p} e^{-A_p(\alpha)(n+1)},
\end{align*}
for all natural number $n$ satisfying $1/{\rm{log}}(|\alpha|^{-1}_p)+1/m\le n$.
Using Proposition $\ref{critere p}$ for 
$\theta_1={\rm{log}}_p(1+\alpha),\ldots,\theta_{m-1}={\rm{log}}^{m-1}_p(1+\alpha)$, we obtain the assertion of Theorem $\ref{p power of log indep}$.
\end{proof}

Acknowledgements. The author warmly thank Noriko Hirata-Khono for her comments on the earlier version of this manuscript.

\bibliography{}

\medskip\vglue5pt
\vskip 0pt plus 1fill
\hbox{\vbox{\hbox{Makoto \textsc{Kawashima}}
\hbox{Mathematical Institute,}
\hbox{Osaka University,}
\hbox{Machikaneyama, Toyonaka,}
\hbox{Osaka, 560-8502, Japan}
\hbox{{\tt m-kawashima@math.sci.osaka-u.ac.jp}}
}}

\end{document}